\newcommand{\ep}{{ \epsilon  }}
\newcommand{\bq}{\begin{equation}}
\newcommand{\eq}{\end{equation}}
\newcommand{\pa}{\partial}
\newcommand{\R}{{ \mathbb{R}  }}
\newcommand{\bbr}{{ \mathbb{R}  }}
\newcommand{\calH}{{ \mathcal H (\bbr^3) }}
\newcommand{\calV}{{ \mathcal V }}
\newcommand{\calVs}{{ \calV_{\sigma}  }}
\newcommand{\ify}{\infty}
\newcommand{\bke}[1]{\left( #1 \right)}
\newcommand{\norm}[1]{\left\Vert #1 \right\Vert}
\newcommand{\abs}[1]{\left| #1 \right|}
\newcommand{\na}{\nabla}
\newcommand{\ddt}{\frac{d}{dt}}
\newcommand{\wx}{\langle x \rangle}
\newcommand{\Del}{\Delta}
\newcommand{\ltd}{{L^2(\bbr^3)}}
\newcommand{\lod}{{L^1(\bbr^3)}}
\newcommand{\td}{{L^2(0,T;\ltd)}}
\newcommand{\pq}[2]{{L^{#1}(0,T; L^{#2}(\bbr^3))}}
\newcommand{\intd}{\int_{\bbr^3}}
\newcommand{\qed}{\hfill\fbox{}\par\vspace{.2cm}}
\begin{document}
\bibliographystyle{plain}
%\english

% \newtheorem{thm}{Theorem}[section]
% \newtheorem{cor}[thm]{Corollary}
% \newtheorem{lem}[thm]{Lemma}
% \newtheorem{prop}[thm]{Proposition}
% \theoremstyle{definition}
% \newtheorem{defn}[thm]{Definition}
% \theoremstyle{remark}
% \newtheorem{rem}[thm]{Remark}

\newtheorem{defn}{Definition}
\newtheorem{lemma}[defn]{Lemma}
\newtheorem{proposition}{Proposition}
\newtheorem{theorem}[defn]{Theorem}
\newtheorem{cor}{Corollary}
\newtheorem{remark}{Remark}
\numberwithin{equation}{section}

\def\Xint#1{\mathchoice
   {\XXint\displaystyle\textstyle{#1}}%
   {\XXint\textstyle\scriptstyle{#1}}%
   {\XXint\scriptstyle\scriptscriptstyle{#1}}%
   {\XXint\scriptscriptstyle\scriptscriptstyle{#1}}%
   \!\int}
\def\XXint#1#2#3{{\setbox0=\hbox{$#1{#2#3}{\int}$}
     \vcenter{\hbox{$#2#3$}}\kern-.5\wd0}}
\def\ddashint{\Xint=}
\def\dashint{\Xint-}
\def\aint{\Xint\diagup}

\newenvironment{proof}{{\bf Proof.}}{\hfill\fbox{}\par\vspace{.2cm}}
\newenvironment{pfthm1}{{\par\noindent\bf
            Proof of Theorem 1. }}{\hfill\fbox{}\par\vspace{.2cm}}
\newenvironment{pfthm2}{{\par\noindent\bf
            Proof of Theorem 2. }}{\hfill\fbox{}\par\vspace{.2cm}}
\newenvironment{pfthm3}{{\par\noindent\bf
Proof of Theorem 3. }}{\hfill\fbox{}\par\vspace{.2cm}}
\newenvironment{pfthm4}{{\par\noindent\bf
Proof of Theorem 4. }}{\hfill\fbox{}\par\vspace{.2cm}}
\newenvironment{pfthm5}{{\par\noindent\bf
Proof of Theorem 5. }}{\hfill\fbox{}\par\vspace{.2cm}}
\newenvironment{pflemsregular}{{\par\noindent\bf
            Proof of Lemma \ref{sregular}. }}{\hfill\fbox{}\par\vspace{.2cm}}

\title{Existence of Smooth Solutions to Coupled Chemotaxis-Fluid Equations}
\author{Myeongju Chae, Kyungkeun Kang and Jihoon Lee}

\date{}

\maketitle
\begin{abstract}
  We consider a system coupling the parabolic-parabolic Keller-Segel equations to the
  incompressible Navier-Stokes equations in spatial dimensions two and three.
  We establish the local existence of regular solutions and
  present some blow-up criterions. For two
  dimensional Navier-Stokes-Keller-Segel equations, regular solutions constructed locally in time
  are, in reality, extended globally under some assumptions pertinent to
  experimental observation in \cite{TCDWKG} on the consumption rate and chemotactic sensitivity.
  We also show the existence of global weak solutions in spatially three dimensions
  with stronger restriction on the consumption rate and chemotactic sensitivity.
  \newline{\bf 2000 AMS Subject
Classification}: 35Q30, 35Q35, 76Dxx, 76Bxx
\newline {\bf Keywords}: chemotaxis-fluid equations, Keller-Segel,
Navier-Stokes system, global solutions, energy estimates
\end{abstract}

\section{Introduction}
 \setcounter{equation}{0}
 In this paper, we consider mathematical models  describing the
 dynamics of  oxygen diffusion and consumption, chemotaxis, and
 viscous incompressible fluids in
$\bbr^d$, with $d=2,\,3$. Bacteria or microorganisms often live in
fluid, in which the biology of chemotaxis is intimately related to
the surrounding physics.
%In particular, the collective behavior of the
%oxygen-driven swimming bacteria in an aquatic fluid will be
%considered.
Such a model was proposed by Tuval et al.\cite{TCDWKG} to describe
the dynamics of swimming bacteria, {\it Bacillus subtilis}. We
consider the following equations in \cite{TCDWKG} and set $Q_{T} =
(0,\, T] \times \R^{d}$ with $d=2,\, 3$:
 \begin{equation}\label{KSNS} \left\{
\begin{array}{ll}
\partial_t n + u \cdot \nabla  n - \Delta n= -\nabla\cdot (\chi (c) n \nabla c),\\
\vspace{-3mm}\\
\partial_t c + u \cdot \nabla c-\Delta c =-k(c) n,\\
\vspace{-3mm}\\
\partial_t u + u\cdot \nabla u -\Delta u +\nabla p=-n \nabla
\phi,\quad
\nabla \cdot u=0 %\qquad t \in (0,\, T], \qquad x \in \R^d
\end{array}
\right. \quad\mbox{ in }\,\, (x,t)\in \R^d\times (0,\, T],
\end{equation}
where $c(t,\,x) : Q_{T} \rightarrow \R^{+}$, $n(t,\,x) : Q_{T}
\rightarrow \R^{+}$, $u(t,\, x) : Q_{T} \rightarrow \R^{d}$ and
$p(t,x) :  Q_{T} \rightarrow \R$ denote the oxygen concentration,
cell concentration, fluid velocity, and scalar pressure,
respectively. The nonnegative function $k(c)$ denotes the oxygen
consumption rate, and the nonnegative function $\chi (c)$ denotes
chemotactic sensitivity. Initial data are given by $(n_0(x), c_0(x),
u_0(x))$. To describe the fluid motions, we use Boussinesq
approximation to denote the effect due to heavy bacteria. The
time-independent function $\phi =\phi (x)$ denotes the potential
function produced by different physical mechanisms, e.g., the
gravitational force or centrifugal force. Thus, $\phi(x)=ax_{d}$ is
one example of gravity force, and $\phi(x)=\phi(|x|)$ is an example
of  centrifugal force. Experiments in \cite{TCDWKG} suggest that the
functions $k(c)$ and $\chi (c)$ are
 constants at large $c$ and rapidly approach zero below some critical
$c^{*}$. Hence, in \cite{TCDWKG}, these functions are approximated
by step functions, e.g., $k(c)=\kappa_1 \theta (c-c^{*})$ and $\chi
(c)= \kappa_2 \theta(c-c^{*})$ for some positive constants
$\kappa_1$ and $\kappa_2$. Also  in \cite{CFKLM}, numerical
simulation of plumes was obtained for the same species of bacteria
in \cite{TCDWKG} in two dimensions. Furthermore, they assumed that
the functions $\chi(c)$ and $k(c)$ are constant multiples of each
other, i.e., $\chi(c)=\mu k(c)$.
\\
The main goals of this paper are to show the local existence of
smooth solutions in two and three dimensions with the general
condition on the oxygen consumption rate and chemotactic
sensitivity, and to demonstrate global existence of smooth solutions
in two dimensions  and weak solutions in three dimensions with
appropriate assumptions of $\chi(c)$, $k(c)$, $\phi$ and initial
data. Here we mention the related works for the result in this
paper. If we ignore the coupling of the fluids, we obtain the
angiogenesis type system. The classical model to describe the motion
of cells was suggested by Patlak\cite{Patlak} and
Keller-Segel\cite{KS1, KS2}. It consists of a system of the dynamics
of  cell density $n=n(t,x)$ and the concentration of chemical
attractant substance $c=c(t,x)$ and is given as
\begin{equation}\label{KS-nD} \,\,\left\{
 \begin{array}{c}
 n_t=\Delta n-\nabla \cdot(n \chi\nabla c),\\
 \vspace{-3mm}\\
 \alpha c_t=\Delta c-\tau c+n,\\
 \vspace{-3mm}\\
 n(x,\,0)=n_0(x),\quad c(x,\,0)=c_0(x),
 \end{array}
 \right.
\end{equation}
where $\chi$ is the sensitivity and $\tau^{-\frac12}$ represents the
activation length. The system in \eqref{KS-nD} has been extensively
studied by many authors(see \cite{Her-Vela, Horst-Wang, NSY, OY,
Win} and references therein). For the chemical consumption model by the cell or bacteria, we refer to the following chemotaxis model
motivated by angiogenesis.
\begin{equation}\label{angio} \left\{
 \begin{array}{c}
 n_t=\Delta n-\nabla \cdot(n\chi(c) \nabla c),\\
 \vspace{-3mm}\\
 c_t=-c^m n,\\
 \vspace{-3mm}\\
 n(x,\,0)=n_0(x),\quad c(x,\,0)=c_0(x).
 \end{array}
 \right.
\end{equation}
The global existence of the weak solution to the system in
\eqref{angio} was obtained by Corrias, Perthame and Zaag\cite{CPZ1,
CPZ2} with a small data assumption of $\| n_0\|_{L^{\frac{d}{2}}}$.
The bacterial movement toward the concentration gradient model in
the absence of the fluid, i.e., $u=0$, was recently studied. When $u
\equiv 0$, $\chi(c)\equiv \chi$  and $k(c)\equiv c$ in \eqref{KSNS},
it was shown in \cite{YTao} that there exists a uniquely global
bounded solution if $\displaystyle 0 < \chi \le
\frac{1}{6(d+1)\|c_0\|_{L^{\infty}}}$.\\
If the flow of the fluid is slow, then the Navier-Stokes equations
can be simplified to the Stokes equations. For the case
$\chi(c)\equiv \chi$, Lorz\cite{Lorz} showed the local existence of
the solution for the Keller-Segel-Stokes system. In
two dimensions, Duan, Lorz, and Markowich\cite{DLM} showed the
global existence of the weak solution to the Keller-Segel-Stokes
equations with the small data assumptions on $c_0$, $\phi$ and the
assumptions on the functions such that
\begin{equation}\label{Assumption1}
\chi(c)>0,\quad \chi'(c)\ge 0,\quad k'(c) >0,\quad \frac{d^2}{dc^2}
\left(\frac{k(c)}{\chi(c)} \right)<0.
\end{equation}
In \cite{Liu-Lorz}, Liu and Lorz showed the global existence of a
weak solution to the two-dimensional Keller-Segel-Navier-Stokes
equations with similar assumptions on $k$ and $\chi$ to those in
\eqref{Assumption1}. The equation of $n$ in \eqref{KSNS} could have
been replaced by a porous medium equation, i.e., $\Delta n$ is
replaced by $\Delta n^m$ and the following Keller-Segel-Stokes
system has been considered in \cite{FLM}.
 \begin{equation}\label{KS-P-S} \left\{
\begin{array}{ll}
\partial_t n + u \cdot \nabla  n - \Delta n^m= -\nabla\cdot (\chi (c) n \nabla c),\\
\vspace{-3mm}\\
\partial_t c + u \cdot \nabla c-\Delta c =-k(c) n,\\
\vspace{-3mm}\\
\partial_t u  -\Delta u +\nabla p=-n \nabla \phi,\quad
\nabla \cdot u=0, %\qquad t \in (0,\, T], \qquad x \in \Omega
\end{array}
\right. \quad\mbox{ in }\,\, (x,t)\in \R^d\times (0,\, T].
\end{equation}
In \cite{FLM}, Francesco, Lorz and Markowich showed the global
existence of the bounded solution to \eqref{KS-P-S} when $m \in
(\frac32,\, 2]$. In \cite{Liu-Lorz}, Liu and Lorz proved the global
existence of the weak solution when $m >\frac43$ in three
dimensions. For the Keller-Segel-Navier-Stokes system \eqref{KSNS},
Duan, Lorz and Markowich\cite{DLM} showed the global-in-time
existence of the $H^3(\R^d)$-solution, near constant states, to
\eqref{KSNS}, i.e., if the initial data $\|(n_0-n_{\infty},\, c_0,\,
u_0)\|_{H^3}$ is sufficiently small, then there exists a unique
global solution.

As mentioned earlier, the aim of this paper is to obtain the
local-in-time existence of the smooth solution in two and three
dimensions and the global-in-time existence of the classical
solution to \eqref{KSNS} in two dimensions under the minimal
assumptions on the consumption rate and chemotactic sensitivity. Now
we are ready to state our main results. The first result in this
article is the local existence in time of the smooth solutions to
\eqref{KSNS}. Comparing with the result in \cite{DLM}, we show the
local-in-time existence without smallness of the initial data.

\begin{theorem}\label{Theorem2} (Local existence)\,\,
Let $m\geq 3$ and $d=2,3$. Assume that $\chi$, $k\in C^m(\R^+)$ and $k(0)=0$, $\|
\nabla^l \phi \|_{L^{\infty}}<\infty$ for $1\le |l|\le m$. There
exists $T>0$, the maximal time of existence, such that, if the
initial data $(n_0,\,c_0,\, u_0)\in H^{m-1}(\R^d) \times H^{m}(\R^d)
\times H^{m}(\R^d)$, then there exists a unique classical solution
$(n,\,c,\,u)$ of \eqref{KSNS} satisfying for any $t<T$
\[
(n,c,u) \in L^{\infty}(0,\, t; H^{m-1}(\R^d)\times H^{m}(\R^d)
\times H^{m}(\R^d)),
\]
\[
(\nabla n,\, \nabla c,\, \nabla u) \in L^2(0,\, t;
H^{m-1}(\R^d)\times H^{m} (\R^d)\times H^{m}(\R^d)).
\]
\end{theorem}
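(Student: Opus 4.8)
The plan is a linearized-iteration scheme together with uniform energy estimates, the delicate points being the derivative loss in the chemotactic term $\na\cdot(\chi(c)n\na c)$ and the asymmetric regularity ($n$ one order below $c$ and $u$). Let $X_T$ denote the space of triples $(n,c,u)$ with $n\in L^\infty(0,T;H^{m-1})\cap L^2(0,T;H^m)$, $c,u\in L^\infty(0,T;H^m)\cap L^2(0,T;H^{m+1})$ and $\na\cdot u=0$. Starting from $(n^0,c^0,u^0)=(n_0,c_0,u_0)$ (constant in time), define $(n^{j+1},c^{j+1},u^{j+1})$ as the unique solution, with common initial data $(n_0,c_0,u_0)$, of the linear problems
\[
\pa_tn^{j+1}+u^j\cdot\na n^{j+1}-\Del n^{j+1}=-\na\cdot\bkp{\chi(c^j)\,n^j\,\na c^j},
\]
\[
\pa_tc^{j+1}+u^j\cdot\na c^{j+1}-\Del c^{j+1}=-k(c^j)\,n^j,
\]
\[
\pa_tu^{j+1}+u^j\cdot\na u^{j+1}-\Del u^{j+1}+\na p^{j+1}=-n^j\,\na\phi,\qquad \na\cdot u^{j+1}=0,
\]
each of which is solvable in $X_T$ by standard linear parabolic and Stokes theory (the pressure being eliminated by the Leray projection).

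The core of the argument is a uniform bound: there exist $R>0$ and $T>0$, depending only on $\norm{(n_0,c_0,u_0)}_{H^{m-1}\times H^m\times H^m}$ and on $\chi,k,\phi$, with $\norm{(n^j,c^j,u^j)}_{X_T}\le R$ for all $j$. Apply $\pal$ to the three equations with $|\al|\le m-1$ for the $n$-equation and $|\al|\le m$ for the $c$- and $u$-equations, pair with the same derivative and integrate. The transport terms yield, after the divergence-free cancellation $\int u^j\cdot\na(\pal f)\,\pal f=0$, only the commutator $\int [\pal,u^j\cdot\na]f\cdot\pal f$, estimated by the Kato--Ponce/Klainerman--Majda commutator inequalities together with the embedding $H^{m-1}(\R^d)\hookrightarrow L^\infty$ and the bilinear Sobolev product estimates---all valid because $m\ge3$ and $d\le3$. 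The chemotactic and consumption terms are handled with the Moser composition bound $\norm{\chi(c^j)-\chi(0)}_{H^s}+\norm{k(c^j)}_{H^s}\le C(\norm{c^j}_{L^\infty})(1+\norm{c^j}_{H^s})$ for $s\le m$ (which uses $\chi,k\in C^m$, $k(0)=0$, and $H^m\hookrightarrow L^\infty$), combined with the fact that $H^{m-1}(\R^d)$ is a Banach algebra for $m\ge3$, $d\le3$: hence $\chi(c^j)n^j\na c^j\in H^{m-1}$ with the expected trilinear bound, so $\na\cdot(\chi(c^j)n^j\na c^j)\in H^{m-2}$---exactly the regularity the parabolic estimate for $n^{j+1}$ absorbs---while $k(c^j)n^j,\,n^j\na\phi\in H^{m-1}$, which the estimates for $c^{j+1},u^{j+1}$ absorb. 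Summing and using Young's inequality to absorb the diffusion yields
\[
\ddt E^{j+1}+\nu\,D^{j+1}\le \calP(E^j)\,(1+E^{j+1}),
\]
where $E^j=\norm{n^j}_{H^{m-1}}^2+\norm{c^j}_{H^m}^2+\norm{u^j}_{H^m}^2$, $D^{j+1}=\norm{n^{j+1}}_{H^m}^2+\norm{c^{j+1}}_{H^{m+1}}^2+\norm{u^{j+1}}_{H^{m+1}}^2$, $\nu>0$, and $\calP$ is a polynomial. An induction in $j$ (continuity argument) shows that if $E^j\le R^2$ on $[0,T]$ with $T$ sufficiently small then the same holds for $E^{j+1}$, and integrating the inequality simultaneously gives the $L^2_tD$-bounds; the induction closes.

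For convergence, set $\delta n^j=n^{j+1}-n^j$, $\delta c^j=c^{j+1}-c^j$, $\delta u^j=u^{j+1}-u^j$ (all with zero initial data), subtract successive equations and estimate in $L^2$. Using the uniform bounds to control coefficients and the local Lipschitz continuity of $\chi,k$, every term is routine except $\na\cdot(\chi(c^j)n^j\na\delta c^{j-1})$ in the $\delta n^j$-equation, which carries two derivatives: integrating by parts it equals $\int\chi(c^j)n^j\,\na\delta c^{j-1}\cdot\na\delta n^j$, and after absorbing a fraction of $\norm{\na\delta n^j}_{L^2}^2$ into the $n$-diffusion it is dominated by a constant times $\norm{\na\delta c^{j-1}}_{L^2}^2$, which is available from the $c$-dissipation at level $j-1$ once one works with the weighted sum $\norm{\delta n^j}_{L^2}^2+M\norm{\delta c^j}_{L^2}^2+\norm{\delta u^j}_{L^2}^2$ with $M=M(R)$ large. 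A Gronwall estimate then gives, for $T$ small, a contraction in $L^\infty(0,T;L^2)\cap L^2(0,T;H^1)$; the uniform bounds and weak-$*$ lower semicontinuity place the limit in $X_T$, and strong $L^2$-convergence identifies all nonlinear limits, so $(n,c,u)$ solves \eqref{KSNS}. Uniqueness in $X_T$ follows from the same weighted $L^2$-difference estimate applied to two solutions. Since $m\ge3$ and $d\le3$ give $H^m\hookrightarrow C^1$ and $H^{m-1}\hookrightarrow C^0$, and differentiating the equations and invoking parabolic regularity yields time-continuity of $(n,c,u)$ and the relevant derivatives, the solution is classical; $p$ is recovered from the elliptic equation obtained by taking the divergence of the momentum equation, and defining $T$ as the supremum of existence times furnishes the maximal time.

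The step I expect to be the main obstacle is the uniform high-order estimate, and within it the bookkeeping imposed by the matched regularity: one must verify that the derivative-losing cubic term $\na\cdot(\chi(c)n\na c)$ lands precisely in $H^{m-2}$---the space the parabolic estimate for $n$ can absorb, which is exactly what forces $m\ge3$ so that $H^{m-1}$ is an algebra in dimensions $\le3$---and carefully track the interaction of $n\in H^{m-1}$ with $c,u\in H^m$ in every nonlinear term; the commutator, Moser, and contraction estimates are then comparatively routine.
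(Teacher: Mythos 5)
Your proposal follows essentially the same route as the paper's proof: an iteration on linearized parabolic/Stokes problems, uniform $H^{m-1}\times H^{m}\times H^{m}$ energy bounds via commutator, product and composition estimates (using that $H^{m-1}(\R^d)$ is an algebra for $m\ge 3$, $d\le 3$), a Gronwall contraction for successive differences, and an $L^2$-level uniqueness argument; the only cosmetic deviations are that the paper keeps $n^{j+1}$ inside the chemotactic term (so it is part of the linear operator rather than a forcing built from old iterates) and runs the contraction in the full $X_m$ norm rather than in $L^{\infty}L^2\cap L^2H^1$ followed by weak-$*$ compactness. One small point you omit and the paper includes: since $\chi$ and $k$ are only assumed $C^m$ on $\R^+$, the compositions $\chi(c^j)$, $k(c^j)$ and your Moser bounds require $c^j\ge 0$ (and the model requires $n^j\ge 0$), which the paper secures at each iteration step by the maximum principle for $c^{j+1}$ and by testing the $n^{j+1}$-equation against its negative part; without this you must tacitly extend $\chi,k$ to a neighborhood of the range of $c^j$.
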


\begin{remark}
For simplicity, we denote
\[
\|(n(t), c(t), u(t))\|_{X_m}:= \|n (t)\|_{ H^{m-1}(\R^d)}+\|c
(t)\|_{ H^{m}(\R^d)}+\|u (t)\|_{ H^{m}(\R^d)}.
\]
We remark that if $T$ is the maximal time of existence with
$T<\infty$ in Theorem \ref{Theorem2}, then
\[
\limsup_{t\nearrow T} \|(n(t), c(t), u(t))\|_{X_m}^2 +\int_0^T
\|(n(t), c(t), u(t))\|^2_{X_{m+1}}=\infty,\qquad m\geq 3.
\]
\end{remark}

Secondly, we obtain two blow-up criteria for the system \eqref{KSNS}
depending on dimensions.
\begin{theorem}\label{Theorem3}
Suppose that $\chi$, $k$, $\phi$ and the initial data $(n_0, c_0, u_0)$
satisfy all the assumptions presented in Theorem \ref{Theorem2}. If
$T^*$, the maximal time existence in Theorem \ref{Theorem2}, is
finite, then one of the following  is true in each case of two or three dimensions, respectively:
\begin{equation}\label{2d-reg}
(2D)\qquad \int_0^{T^*}\norm{\nabla c}^2_{L^{\infty}(\R^2)}=\infty.
\end{equation}
\begin{equation}\label{3d-nse-reg}
(3D)\qquad
\int_0^{T^*}\norm{u}^q_{L^{p}(\R^3)}+\int_0^{T^*}\norm{\nabla
c}^2_{L^{\infty}(\R^3)}=\infty, \qquad
\frac{3}{p}+\frac{2}{q}=1,\,\,3<p\leq \infty.
\end{equation}
\end{theorem}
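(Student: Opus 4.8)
The plan is to argue by contraposition. Assume $T^{*}<\infty$ and that the time integral in \eqref{2d-reg} (for $d=2$), respectively both integrals in \eqref{3d-nse-reg} (for $d=3$), are finite, and show that $\limsup_{t\nearrow T^{*}}\|(n(t),c(t),u(t))\|_{X_m}^{2}+\int_{0}^{T^{*}}\|(n,c,u)\|_{X_{m+1}}^{2}\,dt<\infty$, contradicting the Remark after Theorem \ref{Theorem2}. Write $Y(t):=\|(n(t),c(t),u(t))\|_{X_m}^{2}$ and let $D(t):=\|\nabla n\|_{H^{m-1}}^{2}+\|\nabla c\|_{H^{m}}^{2}+\|\nabla u\|_{H^{m}}^{2}$ denote the full parabolic dissipation. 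The goal is a differential inequality $\frac{d}{dt}Y+c_{0}D\le\Phi(t)\,Y+R(t)$ with $c_{0}>0$ and $\Phi,R\in L^{1}(0,T^{*})$; Gronwall then bounds $\sup_{t<T^{*}}Y$ and, after time integration, $\int_{0}^{T^{*}}D\,dt$, which differs from $\int_{0}^{T^{*}}\|(n,c,u)\|_{X_{m+1}}^{2}\,dt$ only by the (now bounded) lower order norms — the required contradiction.

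First I would collect the low order a priori bounds. Integrating the $n$-equation and using $\nabla\cdot u=0$ gives $\|n(t)\|_{L^{1}}=\|n_{0}\|_{L^{1}}$, $n\ge0$; since $k(0)=0$ and $n,k\ge0$, the maximum principle for the $c$-equation gives $0\le c\le\|c_{0}\|_{L^{\infty}}$, whence $\|\chi^{(j)}(c)\|_{L^{\infty}},\|k^{(j)}(c)\|_{L^{\infty}}\le C$ for $0\le j\le m$. Testing the $n$-equation by $n^{p-1}$, integrating the chemotactic term by parts once and using Young's inequality yields, for every finite $p\ge2$, $\frac{d}{dt}\|n\|_{L^{p}}^{p}+c_{p}\int|\nabla n^{p/2}|^{2}\le C_{p}\,\|\nabla c\|_{L^{\infty}}^{2}\,\|n\|_{L^{p}}^{p}$; since $\int_{0}^{T^{*}}\|\nabla c\|_{L^{\infty}}^{2}\,dt<\infty$ in both cases, Gronwall gives $n\in L^{\infty}(0,T^{*};L^{p})$ for every finite $p$, and in particular ($p=2$) $n\in L^{\infty}(0,T^{*};L^{2})\cap L^{2}(0,T^{*};H^{1})$. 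Feeding $n\nabla\phi\in L^{2}(0,T^{*};L^{2})$ into the basic energy identity for $u$ then gives $u\in L^{\infty}(0,T^{*};L^{2})\cap L^{2}(0,T^{*};H^{1})$, so in particular $\|\nabla u\|_{L^{2}}^{2}\in L^{1}(0,T^{*})$.

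For the high order estimate I would apply $\partial^{\alpha}$ to the three equations ($|\alpha|\le m-1$ for $n$, $|\alpha|\le m$ for $c,u$), test against $\partial^{\alpha}n,\partial^{\alpha}c,\partial^{\alpha}u$, use the Leray projection to drop the pressure and the cancellation $\int(u\cdot\nabla\partial^{\alpha}f)\,\partial^{\alpha}f=0$, and sum over $\alpha$. The chemotactic, reaction and buoyancy terms close using the bounds just obtained: after one integration by parts, the most dangerous chemotactic contribution has the form $\int\chi(c)\,n\,\nabla^{m}c\cdot\nabla^{m}n$, and by Hölder, the Gagliardo--Nirenberg inequality $\|\nabla^{m}c\|_{L^{2q/(q-2)}}\lesssim\|\nabla^{m}c\|_{L^{2}}^{1-d/q}\|\nabla^{m+1}c\|_{L^{2}}^{d/q}$ (any large finite $q$), and Young's inequality this is $\le\varepsilon D+C\|n\|_{L^{q}}^{2q/(q-d)}Y\le\varepsilon D+CY$; the remaining chemotactic pieces carry an extra $\|\nabla c\|_{L^{\infty}}^{2}$ and are bounded by $\varepsilon D+C(1+\|\nabla c\|_{L^{\infty}}^{2})Y$, the reaction term $\partial^{\alpha}(k(c)n)$ by $\varepsilon D+CY$, and the buoyancy term $\partial^{\alpha}(n\nabla\phi)$ — after integrating by parts using $\nabla\cdot\partial^{\alpha}u=0$ and the hypothesis $\|\nabla^{l}\phi\|_{L^{\infty}}<\infty$ — by $\varepsilon D+CY+C\|\nabla n\|_{L^{2}}^{2}$, with $\|\nabla n\|_{L^{2}}^{2}\in L^{1}(0,T^{*})$ by the previous step.

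The transport commutators are the delicate point, and the reason the two dimensions are treated separately. In both cases the naive bound $\int(\partial^{\alpha}u\cdot\nabla u)\cdot\partial^{\alpha}u\le\|\nabla u\|_{L^{\infty}}\|\partial^{\alpha}u\|_{L^{2}}^{2}$ is useless, being superlinear in $Y$. In two dimensions I would use instead the Ladyzhenskaya inequality $\|\partial^{\alpha}u\|_{L^{4}}^{2}\lesssim\|\partial^{\alpha}u\|_{L^{2}}\|\nabla\partial^{\alpha}u\|_{L^{2}}$ to get $\int|\partial^{\alpha}u|^{2}|\nabla u|\le\varepsilon D+C\|\nabla u\|_{L^{2}}^{2}Y$ with $\|\nabla u\|_{L^{2}}^{2}\in L^{1}(0,T^{*})$; the $n$- and $c$-transport commutators, and all intermediate terms, are absorbed by standard Gagliardo--Nirenberg estimates, so that $\Phi(t)=C(1+\|\nabla c\|_{L^{\infty}}^{2}+\|\nabla u\|_{L^{2}}^{2})\in L^{1}(0,T^{*})$. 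In three dimensions I would invoke the Prodi--Serrin mechanism on every $u$-transport term: integrate by parts to move all derivatives off the factor of $u$ carrying the most of them — they land on factors of $n$, $c$ or $u$ that stay within the dissipation ranges $\nabla^{m}n$, $\nabla^{m+1}c$, $\nabla^{m+1}u$, and here it is essential that the $X_{m+1}$-dissipation controls one derivative more than the $X_{m}$-norm — then apply Hölder with $u\in L^{p}$, a Gagliardo--Nirenberg interpolation and Young's inequality, each such term becoming $\le\varepsilon D+C\|u\|_{L^{p}}^{q}Y$ with the conjugate exponents forced to $\frac{3}{p}+\frac{2}{q}=1$; so $\Phi(t)=C(1+\|\nabla c\|_{L^{\infty}}^{2}+\|u\|_{L^{p}}^{q})\in L^{1}(0,T^{*})$, precisely by \eqref{3d-nse-reg}. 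Choosing $\varepsilon$ small to absorb all $\varepsilon D$ terms yields $\frac{d}{dt}Y+c_{0}D\le\Phi Y+R$, and Gronwall finishes the argument. The hard part is exactly this last, dimension-three bookkeeping: arranging term by term that $u$ enters each transport commutator only through $\|u\|_{L^{p}}$ with the critical exponent pair (or through factors absorbable into $D$), while keeping the chemotactic coupling controlled through the Moser $L^{q}$-bound on $n$ that came from $\int_{0}^{T^{*}}\|\nabla c\|_{L^{\infty}}^{2}\,dt$.
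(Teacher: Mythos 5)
Your strategy is sound and it reaches the conclusion, but it is genuinely different from the paper's route. The paper does not attempt a single top-order Gronwall inequality with an $L^1$-in-time coefficient. Instead it first proves an auxiliary criterion (Proposition 1): if $\int_0^{T}(\|\nabla u\|_{L^{\infty}}+\|\nabla c\|_{L^{\infty}}^2)\,dt<\infty$ then all $X_m$ norms stay bounded, the proof being a staged bootstrap through $L^2$, $H^1\times H^2$, $H^2\times H^3\times H^3$, and only then general $m$ -- the point of the staging being that once $c\in L^{\infty}_tH^3_x$ is known, $\|\nabla c\|_{L^{\infty}}$ is bounded in time and the high powers of it produced by differentiating $\chi(c)$ and $k(c)$ become harmless. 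Theorem 3 is then reduced to Proposition 1 by showing that the hypotheses force $\int_0^{T^*}\|\nabla u\|_{L^{\infty}}\,dt<\infty$, and this is done through the \emph{vorticity} equation: in 2D there is no stretching term, so $\omega\in L^2_tH^2_x$ follows from $\nabla n\in L^2_{t,x}$; in 3D the stretching term is absorbed using the Serrin condition on a short interval $(T^*-\delta,T^*)$ where $\|u\|_{L^{p,q}_{x,t}}$ is small; in both cases $\|\nabla u\|_{L^{\infty}}\lesssim\|\omega\|_{H^2}$ closes the loop. Your direct approach buys a shorter logical chain (no vorticity formulation, no intermediate $L^1_tL^{\infty}_x$ bound on $\nabla u$), at the price of doing all the work in the top-order commutator and product estimates; the paper's approach buys a reusable intermediate criterion and a transparent treatment of the transport terms (the standard Moser bound with coefficient $\|\nabla u\|_{L^{\infty}}\in L^1_t$ suffices once Proposition 1 is in hand).

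One place where your sketch is thinner than it should be: the composition terms. Expanding $\partial^{\alpha}(k(c)n)$ and $\partial^{\alpha}(\chi(c)n\nabla c)$ by Leibniz produces contributions such as $\chi^{(j)}(c)(\nabla c)^{j}\,\partial^{\beta}n\,\partial^{\gamma}c$ with $j\geq 2$; the naive bound carries $\|\nabla c\|_{L^{\infty}}^{2j}$, which is \emph{not} integrable in time under the hypothesis $\int\|\nabla c\|_{L^{\infty}}^2\,dt<\infty$, and the crude product bound $\|k(c)n\|_{H^{m-1}}\lesssim(1+\|c\|_{H^m}^{m-1})\|n\|_{H^{m-1}}$ used in the local theory is superlinear in $Y$ and would break Gronwall. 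To make your one-step argument close you must either invoke the sharp Moser composition estimate $\|F(c)\|_{H^{s}}\leq C(\|c\|_{L^{\infty}})(1+\|c\|_{H^{s}})$ (linear in the top norm) together with the a priori bounds $n,c\in L^{\infty}_{t,x}$ and $\nabla c\in L^{\infty}_tL^2_x$, or distribute the derivatives by Gagliardo--Nirenberg so that each such product is $\lesssim Y^{1/2}$ times bounded lower-order quantities. This is fixable, but it is precisely the difficulty the paper's staged bootstrap is designed to avoid, and your phrase ``the remaining chemotactic pieces carry an extra $\|\nabla c\|_{L^{\infty}}^2$'' is not literally correct for the $j\geq2$ terms.
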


\begin{remark}
Theorem \ref{Theorem3} can be interpreted as follows: If
$\int_0^T\norm{\nabla c}^2_{L^{\infty}}<\infty$ in two dimensions or
if $\int_0^T (\norm{u}^q_{L^{p}} +\| \na c \|_{L^{\infty}}^2) dt<
\infty$ in three dimensions, then the local solution can persist
beyond time $T$, i.e., $(n,\, c,\, u)\in L^{\infty}(0,\, T+\delta;
X_m)\cap L^2(0,\, T+\delta; X_{m+1})$ for some $\delta>0$.
\end{remark}

The third main result is  the global existence of the smooth solutions
in the  two-dimensional spatial domain $\R^2$.
Motivated by
experiments in \cite{TCDWKG} and \cite{CFKLM}, we assume that the oxygen
consumption rate $k(c)$ and chemotactic sensitivity $\chi(c)$
satisfy the following conditions: \\
\\
{\bf{(A)}} There exists a constant $\mu$ such that $\sup | \chi (c)-
\mu k(c)| < \epsilon$ for a sufficiently
small $\epsilon>0$.\vspace{0.2cm}\\
{\bf{(B)}} $\chi(c), k(c), \chi'(c), k'(c)$ are all non-negative,
i.e.,
$\chi(c),\, k(c),\, \chi'(c),\, k'(c)\geq 0$.\\

We remark that assumption {\bf{(A)}} plays a crucial role in
obtaining $LlogL \times H^1 \times L^2$ type estimates.
\begin{theorem}\label{Theorem4}(Global existence in two dimensions)
Let $d=2$. Suppose that $\chi$, $k$, $\phi$ and the initial data
$(n_0, c_0, u_0)$ satisfy all the assumptions presented in Theorem
\ref{Theorem2}. Assume further that $\chi$ and $k$  satisfy the
assumptions {\bf{(A)}} and {\bf{(B)}} and $\phi \geq 0$. Then the unique regular
solution $(n,\,c, u)$ exists globally in time and satisfies for any
$T<\infty$
\[
(n,\,c,\, u) \in L^{\infty}(0,\, T; H^{m-1}(\R^2)\times
H^m(\R^2)\times H^m(\R^2)),
\]
\[
(\nabla n,\,\nabla c,\,\nabla u) \in L^2(0,\, T; H^{m-1}(\R^2)
\times H^{m}(\R^2)\times H^{m}(\R^2)).
\]
\end{theorem}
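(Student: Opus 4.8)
The strategy is to show that the local solution produced by Theorem~\ref{Theorem2} can never trigger the two‑dimensional blow‑up criterion \eqref{2d-reg}: I will prove that $\int_0^T\norm{\nabla c(t)}_{L^{\infty}(\R^2)}^2\,dt<\infty$ for every finite $T$ smaller than the maximal time $T^{*}$. Granting this for all such $T$, criterion \eqref{2d-reg} forces $T^{*}=\infty$, and Theorem~\ref{Theorem2} then supplies the asserted regularity on every bounded interval. The first, elementary, inputs are: integrating the $n$‑equation gives $\norm{n(t)}_{L^1}=\norm{n_0}_{L^1}=:M$ and $n\ge0$; since $k\ge0,\ n\ge0$ the source $-k(c)n$ in the $c$‑equation is $\le0$, so the maximum principle yields $0\le c(t,x)\le\norm{c_0}_{L^{\infty}}$, whence $\chi(c),k(c),\chi'(c),k'(c)$ are bounded by constants depending only on $\norm{c_0}_{L^{\infty}}$ (here $\chi,k\in C^m$, $k(0)=0$ are used); and testing the $c$‑equation with $c$ gives $\tfrac12\tfrac{d}{dt}\norm{c}_{L^2}^2+\norm{\nabla c}_{L^2}^2=-\int_{\R^2}k(c)nc\le0$, hence $\norm{c(t)}_{L^2}\le\norm{c_0}_{L^2}$ and $\int_0^{\infty}\norm{\nabla c}_{L^2}^2\,dt\le\tfrac12\norm{c_0}_{L^2}^2$.

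The core is a Lyapunov‑type $L\log L\times H^1\times L^2$ estimate built on the modified free energy
\[
\mathcal E(t)=\int_{\R^2}\bigl(n\log n+\kappa\,n\phi\bigr)\,dx+\frac{\mu}{2}\norm{\nabla c}_{L^2}^2+\frac{\kappa}{2}\norm{u}_{L^2}^2 ,
\]
with $\mu$ from assumption (A) (one may take $\mu>0$, since $\mu\le0$ together with (A),(B) forces $\chi\equiv0$, a decoupled case) and $\kappa>0$ a fixed constant to be chosen. Differentiating: the transport terms vanish because $\nabla\cdot u=0$; the fluid‑work term $\kappa\int_{\R^2}n\,u\cdot\nabla\phi$ cancels the buoyancy term $-\kappa\int_{\R^2}n\nabla\phi\cdot u$ from $\tfrac{\kappa}{2}\tfrac{d}{dt}\norm{u}_{L^2}^2$; and the decisive cancellation is that, after one integration by parts,
\[
\int_{\R^2}\chi(c)\nabla c\cdot\nabla n+\mu\int_{\R^2}k(c)n\,\Delta c=-\int_{\R^2}n\,\chi'(c)\abs{\nabla c}^2-\int_{\R^2}n\bigl(\chi(c)-\mu k(c)\bigr)\Delta c\le\epsilon\int_{\R^2}n\,\abs{\Delta c},
\]
where (B) gives $n\chi'(c)\abs{\nabla c}^2\ge0$ and (A) gives $\abs{\chi(c)-\mu k(c)}<\epsilon$. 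One therefore arrives at
\[
\frac{d}{dt}\mathcal E+\int_{\R^2}\frac{\abs{\nabla n}^2}{n}+\mu\norm{\Delta c}_{L^2}^2+\kappa\norm{\nabla u}_{L^2}^2\le\epsilon\int_{\R^2}n\abs{\Delta c}+\mu\int_{\R^2}\Delta c\,(u\cdot\nabla c)+\kappa\int_{\R^2}n\Delta\phi+\kappa\int_{\R^2}\chi(c)\,n\,\nabla\phi\cdot\nabla c .
\]

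Each right‑hand term is subordinated to the dissipation by two‑dimensional interpolation, using $\norm{n}_{L^2}\le CM^{1/2}\bigl(\int_{\R^2}\abs{\nabla n}^2/n\bigr)^{1/2}$, the Gagliardo--Nirenberg inequality $\norm{\nabla c}_{L^4(\R^2)}^2\le C\norm{c}_{L^{\infty}}\norm{\Delta c}_{L^2}$, and $\norm{\nabla^2 c}_{L^2}=\norm{\Delta c}_{L^2}$: the first term $\le\epsilon CM^{1/2}\bigl(\int\abs{\nabla n}^2/n\bigr)^{1/2}\norm{\Delta c}_{L^2}$ is absorbed once $\epsilon$ is small relative to $M,\mu$; after integrating by parts the second equals $-\mu\int_{\R^2}\partial_i c\,\partial_i u_j\,\partial_j c\le C\mu\norm{c_0}_{L^{\infty}}\norm{\nabla u}_{L^2}\norm{\Delta c}_{L^2}$, absorbed once $\kappa$ is fixed large in terms of $\mu,\norm{c_0}_{L^{\infty}}$; the third is the constant $\kappa M\norm{\Delta\phi}_{L^{\infty}}$; the fourth is $\le\tfrac14\int\abs{\nabla n}^2/n+C\kappa^2 M\norm{\chi}_{L^{\infty}}^2\norm{\nabla\phi}_{L^{\infty}}^2\norm{\nabla c(t)}_{L^2}^2$, time‑integrable by the preliminaries. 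Hence for every $T<\infty$
\[
\sup_{[0,T]}\mathcal E(t)+\int_0^T\!\Bigl(\int_{\R^2}\frac{\abs{\nabla n}^2}{n}+\norm{\Delta c}_{L^2}^2+\norm{\nabla u}_{L^2}^2\Bigr)dt\le C(T,\text{data}),
\]
where the lower bound on $\int n\log n$ needed here comes from $\int n\log n\ge-C(M)-\int_{\R^2}n\abs{x}^2\,dx$ and a parallel estimate for $\tfrac{d}{dt}\int_{\R^2}n\abs{x}^2\,dx$. Thus $n\in L^{\infty}(0,T;L\log L)$, $\sqrt n\in L^2(0,T;H^1)$ (so $n\in L^2(0,T;L^2)$), $c\in L^{\infty}(0,T;H^1)\cap L^2(0,T;H^2)$, $u\in L^{\infty}(0,T;L^2)\cap L^2(0,T;H^1)$. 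Bootstrapping: testing the $n$‑equation with $n$ gives $\tfrac{d}{dt}\norm{n}_{L^2}^2+\norm{\nabla n}_{L^2}^2\le C\norm{c_0}_{L^{\infty}}^2\norm{\Delta c}_{L^2}^2\norm{n}_{L^2}^2$, so $n\in L^{\infty}(0,T;L^2)\cap L^2(0,T;H^1)$; testing Navier--Stokes with $-\Delta u$, using $\int(u\cdot\nabla u)\cdot\Delta u=0$ and $\int\nabla p\cdot\Delta u=0$ in $2$D, gives $\tfrac{d}{dt}\norm{\nabla u}_{L^2}^2+\norm{\Delta u}_{L^2}^2\le\norm{\nabla\phi}_{L^{\infty}}^2\norm{n}_{L^2}^2$, so $u\in L^{\infty}(0,T;H^1)\cap L^2(0,T;H^2)$; finally estimating $\tfrac{d}{dt}\norm{\Delta c}_{L^2}^2$ from $\partial_t c-\Delta c=-u\cdot\nabla c-k(c)n$ (product rule, the regularity of $n,u$ just found, $H^2(\R^2)\hookrightarrow L^{\infty}$) gives $\tfrac{d}{dt}\norm{\Delta c}_{L^2}^2+\norm{\nabla\Delta c}_{L^2}^2\le g(t)(1+\norm{\Delta c}_{L^2}^2)$ with $g\in L^1(0,T)$, so $c\in L^{\infty}(0,T;H^2)\cap L^2(0,T;H^3)$. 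In particular $\nabla c\in L^2(0,T;H^2)\hookrightarrow L^2(0,T;L^{\infty}(\R^2))$, so $\int_0^T\norm{\nabla c}_{L^{\infty}(\R^2)}^2\,dt<\infty$, and \eqref{2d-reg} yields $T^{*}=\infty$, hence the stated regularity on each $[0,T]$.

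The main obstacle is closing the $L\log L\times H^1\times L^2$ estimate — making the four right‑hand terms genuinely lower order — and this is exactly what hypotheses (A), (B), $\phi\ge0$ are designed for. (B) provides the good sign of $\int n\chi'(c)\abs{\nabla c}^2$; the smallness of $\epsilon$ in (A) is indispensable, since the residual $\epsilon\int n\abs{\Delta c}$ is dominated by the dissipation $\int\abs{\nabla n}^2/n+\mu\norm{\Delta c}_{L^2}^2$ only when $\epsilon$ is small relative to $\norm{n_0}_{L^1}$ and $\mu$; the convection term $\mu\int\Delta c\,(u\cdot\nabla c)$ is borderline in two dimensions and is tamed only by exploiting $\norm{c}_{L^{\infty}}\le\norm{c_0}_{L^{\infty}}$ (which gives $\norm{\nabla c}_{L^4}^2\lesssim\norm{c_0}_{L^{\infty}}\norm{\Delta c}_{L^2}$ in place of $\lesssim\norm{\nabla c}_{L^2}\norm{\Delta c}_{L^2}$) and by weighting the fluid energy heavily; and $\phi\ge0$ keeps $\int n\phi\ge0$, so that $\mathcal E$ stays coercive. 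A secondary technical point, standard on $\R^2$, is the lower bound for $\int n\log n$, handled by the accompanying second‑moment estimate.
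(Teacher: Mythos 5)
Your proposal follows essentially the same route as the paper's proof: the same $L\log L\times H^1\times L^2$ Lyapunov functional built from $\int n\ln n+\mu|\nabla c|^2+\lambda(|u|^2/2+n\phi)$, the same decisive near-cancellation of $\chi(c)-\mu k(c)$ from (A) together with the sign of $\chi'(c)$ from (B), the same weight/entropy bookkeeping with a spatial moment to control $(\ln n)_-$, and the same bootstrap through $n\in L^\infty L^2\cap L^2H^1$ and $c\in L^\infty H^2\cap L^2H^3$ so that $\nabla c\in L^2(0,T;L^\infty)$ and the criterion \eqref{2d-reg} cannot fire. The only cosmetic differences are that you test the Navier--Stokes equation with $-\Delta u$ where the paper uses the vorticity formulation (equivalent in two dimensions) and that you use a second moment where the paper uses $\langle x\rangle$.
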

\begin{remark}
If we approximate Heaviside functions using the smooth functions, then
 the experiments in \cite{TCDWKG} satisfy the assumptions
{\bf{(A)}} and {\bf{(B)}}. Furthermore, the assumptions on $\phi$
are satisfied by gravitational and centrifugal forces. Also we note
that 2D numerical studies were performed under the assumption $\chi(c)=
\mu k(c)$ in \cite{CFKLM}.
\end{remark}
Our final main theorem is on the global-in-time existence of weak
solutions in three dimensions. The notion of a weak solution of
\eqref{KSNS} is detailed in section 4 (see Definition
\ref{defweak}). For existence of global weak solution, we need
similar restrictions on $k(c)$ and $\chi(c)$ as in Theorem
\ref{Theorem4}. More precisely, compared to {\bf{(A)}}, we
impose a slightly stronger assumption, denoted by {\bf{(AA)}}, which is given as follows:\\
\\
{\bf{(AA)}} There exists a constant $\mu$ such that $\chi(c)-\mu
k(c) =0$.
\\
\\
We are ready to state our last main result.
\begin{theorem}\label{Theorem5}
Let $d=3$ and $(n_0, c_0, u_0)$ satisfy
\begin{align} \label{weakdata} \begin{aligned}
&  n_0 \in L^1(\bbr^3), \quad c_0 \in H^1(\R^3)\cap L^{\infty}(\R^3),\quad u_0 \in \calH,\\
&n_0\ge 0,\quad c_0 \ge 0,\quad \intd  (1+ |\ln n_0| + |x| )n_0 dx
< \infty.
\end{aligned}
\end{align}
 Assume further that  $\chi$ and $k$
$\in C^1(\bbr^+)$ satisfy the assumption {\bf{(AA)}}, {\bf{(B)}} and $k(0)=0$ and $\phi$ satisfies $\phi \geq 0$ and $\| \nabla^{l} \phi \|_{L^{\infty}}< \infty$ for $0\leq |l| \leq 2$.
Then a weak solution $(n,c,u)$ exists globally in time.
%there exists a global weak solution $(n,c,u)$ in the sense of
%Definition \ref{defweak}.
\end{theorem}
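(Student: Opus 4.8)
\medskip
\noindent\textbf{Proof strategy.}\ The plan is the standard compactness scheme for global weak solutions: regularize the system so that smooth solutions exist for all time, derive a priori bounds uniform in the regularization parameter, and pass to the limit in the weak formulation. Since the data are only $n_0\in L^1(\R^3)$, $c_0\in H^1(\R^3)\cap L^\infty(\R^3)$ and $u_0\in\calH$, I would first mollify them into smooth nonnegative $(n_{0,\ep},c_{0,\ep},u_{0,\ep})$ with $n_{0,\ep}\to n_0$ in $L^1$ and $\intd(1+|\ln n_{0,\ep}|+|x|)n_{0,\ep}\,dx\le C$ uniformly, $c_{0,\ep}\to c_0$ in $H^1$ with $\|c_{0,\ep}\|_{L^\infty}\le\|c_0\|_{L^\infty}$, and $u_{0,\ep}\to u_0$ in $L^2$. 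The chemotactic flux $\chi(c)n\nabla c$ is regularized in Winkler's fashion as $\frac{n}{1+\ep n}\chi(c)\nabla c$ (optionally mollifying $u$ in the $n$- and $c$-equations as well); this flux is bounded for fixed $\ep>0$, so Theorem~\ref{Theorem2} together with the continuation criterion in the Remark following it yields a global-in-time smooth solution $(n_\ep,c_\ep,u_\ep)$.

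The core of the argument is a family of $\ep$-uniform estimates. A maximum-principle argument gives $n_\ep\ge0$; since $k(0)=0$ and $k,n_\ep\ge0$ by {\bf(B)}, a comparison argument gives $0\le c_\ep\le\|c_0\|_{L^\infty}$; integrating the $n$-equation gives $\|n_\ep(t)\|_{L^1}\le C$. Testing the $c$-equation by $c_\ep$ gives $c_\ep\in L^\infty(0,T;L^2)$ with $\nabla c_\ep\in L^2(Q_T)$, uniformly. The decisive estimate is an entropy--energy inequality for
\[
\calE_\ep(t)=\intd n_\ep\ln n_\ep\,dx+\mu\intd n_\ep K(c_\ep)\,dx+\tfrac12\|u_\ep(t)\|_{L^2}^2+\intd n_\ep\phi\,dx,\qquad K'=k,
\]
coupled with the moment bound $\sup_t\intd(1+|x|)n_\ep\,dx\le C$ (the $|x|$--weight is needed because $\intd n\ln n\,dx$ is not bounded below on $\R^3$ by itself, which is precisely why \eqref{weakdata} asks for it). Here assumption {\bf(AA)}, i.e.\ $\chi=\mu k$, is essential: on differentiating $\calE_\ep$, the fluid-advection cross terms $\pm\mu\intd n_\ep k(c_\ep)\,u_\ep\cdot\nabla c_\ep\,dx$ coming from $\frac{d}{dt}\intd n_\ep\ln n_\ep$ and from $\frac{d}{dt}\mu\intd n_\ep K(c_\ep)$ cancel, and the advection term in $\frac{d}{dt}\intd n_\ep\phi$ cancels the one produced by the buoyancy force $-n_\ep\nabla\phi$ in $\frac{d}{dt}\tfrac12\|u_\ep\|_{L^2}^2$; {\bf(B)} makes the remaining terms $-\mu\intd n_\ep k'(c_\ep)|\nabla c_\ep|^2\,dx$ and $-\mu\intd n_\ep^2k(c_\ep)^2\,dx$ favourable, and $\phi\ge0$ keeps $\calE_\ep$ bounded below. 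One is then led to an inequality of the schematic form
\[
\frac{d}{dt}\calE_\ep+\delta\bigl(\|\nabla\sqrt{n_\ep}\|_{L^2}^2+\|\nabla u_\ep\|_{L^2}^2\bigr)\le C\intd n_\ep|\nabla c_\ep|^2\,dx+C.
\]

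The main obstacle is controlling the critical cubic quantity $\intd n_\ep|\nabla c_\ep|^2\,dx$ in three dimensions; in $d=2$ it is harmless (hence the relative ease of Theorem~\ref{Theorem4}), but in $d=3$ it is exactly the term forcing the smallness hypotheses in \cite{DLM, Liu-Lorz}. I would deal with it following Winkler's device of additionally tracking $\tfrac12\intd\frac{|\nabla c_\ep|^2}{c_\ep}\,dx$ (with $c_\ep$ replaced by $c_\ep+\delta$ if needed), whose dissipation contains $\intd c_\ep|D^2\ln c_\ep|^2\,dx$ and therefore, since $0\le c_\ep\le\|c_0\|_{L^\infty}$, controls $\|\nabla c_\ep\|_{L^4(Q_T)}$ and $\|\Delta c_\ep\|_{L^2(Q_T)}$; the troublesome fluid term $\intd u_\ep\cdot\nabla c_\ep\,\Delta c_\ep\,dx$ appearing there is absorbed into $\|\nabla u_\ep\|_{L^2}^2+\|\Delta c_\ep\|_{L^2}^2$ by Gagliardo--Nirenberg. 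Combining this with the entropy inequality and the $L\log L$ bound (which forces $\sqrt{n_\ep}\in L^2(0,T;H^1)$, hence $n_\ep\in L^{5/3}(Q_T)$) closes the estimates by Gronwall's lemma; this absorption step, unavailable in $3$D without the cancellations furnished by {\bf(AA)} and the signs in {\bf(B)}, is the technically hardest point. The upshot is: $n_\ep$ bounded in $L^\infty(0,T;L\log L)$ with $\nabla\sqrt{n_\ep}\in L^2(Q_T)$, $c_\ep$ bounded in $L^\infty(Q_T)\cap L^2(0,T;H^1)$, $u_\ep$ bounded in $L^\infty(0,T;L^2)\cap L^2(0,T;H^1)$, and the diffusive--chemotactic flux $\nabla n_\ep-\frac{n_\ep}{1+\ep n_\ep}\chi(c_\ep)\nabla c_\ep$ bounded in $L^{5/4}(Q_T)$.

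Finally I would pass to the limit. The equations and the uniform bounds give $\partial_t n_\ep$, $\partial_t c_\ep$, $\partial_t u_\ep$ bounded in suitable negative-order spaces, so by the Aubin--Lions lemma $c_\ep\to c$ and $u_\ep\to u$ strongly in $L^2_{loc}(Q_T)$ and $n_\ep\to n$ strongly in $L^p_{loc}(Q_T)$ for $p<5/3$, and along a subsequence $n_\ep\to n$ and $c_\ep\to c$ a.e.; in particular $\sqrt{n_\ep}\to\sqrt n$ strongly in $L^q_{loc}(Q_T)$ for $q<10/3$. The quadratic Navier--Stokes term, the production term $k(c_\ep)n_\ep$ (with $k(c_\ep)$ bounded since $c_\ep$ is), and the buoyancy term $n_\ep\nabla\phi$ pass to the limit by these convergences; for the chemotactic term one combines the strong convergence of $\sqrt{n_\ep}$, the weak $L^2(Q_T)$ convergence of $\chi(c_\ep)\nabla c_\ep$, and the uniform $L^{5/4}$ flux bound to identify its limit as $n\chi(c)\nabla c$. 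Because $k(0)=0$, a.e.\ convergence yields $n\ge0$ and $c\ge0$, and the entropy and energy inequalities persist in the limit by weak lower semicontinuity. A diagonal argument in $T$ then produces a global weak solution in the sense of Definition~\ref{defweak}.
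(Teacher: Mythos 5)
Your overall scheme (regularize, derive $\ep$-uniform entropy/energy/moment bounds, pass to the limit) matches the paper's, and your treatment of the initial data, the $(\ln n)_-$ versus $\langle x\rangle n$ bookkeeping, and the limit passage are all in the right spirit. However, there are two concrete gaps at the two load-bearing points of the argument.

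First, your regularization only truncates the chemotactic flux (Winkler's $\frac{n}{1+\ep n}$ device) and leaves the full three-dimensional Navier--Stokes equation intact. You then invoke Theorem \ref{Theorem2} plus the continuation criterion to claim the approximate solutions are global, but the 3D criterion \eqref{3d-nse-reg} requires a Serrin condition $\int_0^T\|u\|_{L^p}^q\,dt<\infty$ on the velocity, which you cannot verify for the unregularized 3D Navier--Stokes equation with $L^2$ data --- that is the open regularity problem. The paper avoids this by additionally projecting the fluid equation onto the finite-dimensional spectral range $P_k\mathcal H$ of the Stokes operator, so that $\|\nabla u^{k,\ep}\|_{L^2}\le\sqrt{k}\,\|u^{k,\ep}\|_{L^2}$ and the Serrin condition holds trivially for each fixed $k$; some such regularization of the fluid part (Galerkin, frequency cut-off, or mollified advection) is indispensable and is missing from your construction.

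Second, and more importantly, your entropy functional pairs $\int n\ln n$ with $\mu\int nK(c)$, $K'=k$, which is the Duan--Lorz--Markowich-type functional; as you correctly observe, this leaves the critical cubic term $\int n|\nabla c|^2$ with an unfavourable sign, and you propose to absorb it via Winkler's functional $\int|\nabla c|^2/c$. That absorption is not known to close for the full chemotaxis--Navier--Stokes system in $\R^3$ (it is exactly the obstruction behind the smallness hypotheses in \cite{DLM,Liu-Lorz}), so the step you yourself call ``the technically hardest point'' is asserted rather than proved. The hypothesis {\bf(AA)} is meant to be used differently: pair $\int n\ln n$ with $\mu\|\nabla c\|_{L^2}^2$ instead. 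Then the entropy production contributes $-\int\chi(c)\,n\,\Delta c$, the $H^1$ estimate of $c$ contributes $+\mu\int k(c)\,n\,\Delta c$, and by {\bf(AA)} these cancel \emph{exactly}, $-\int(\chi(c)-\mu k(c))\,n\,\Delta c=0$, while {\bf(B)} puts $\int\chi'(c)|\nabla c|^2 n\ge 0$ on the good side; no cubic term $\int n|\nabla c|^2$ ever appears, and the only remaining coupling, $\mu\|c_0\|_{L^\infty}\|\nabla u\|_{L^2}\|\Delta c\|_{L^2}$, is absorbed after adding a large multiple of the kinetic-energy identity. (The advection cross-terms you emphasize cancel by incompressibility in either formulation, so they are not where {\bf(AA)} earns its keep.) With this choice of functional the a priori estimate closes by Gronwall without any Winkler-type machinery.
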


The rest of this paper is organized as follows. In Section 2, we
prove local-in-time existence of the smooth solution for the two and
three dimensional chemotaxis system with incompressible
Navier-Stokes equations  and obtain some blow-up criteria for the
solution. In Section 3, we show the global in time existence of the
regular solution in two dimensions. In Section 4, we establish the
existence of a weak solution in three dimensions.

%%%%%%%%%%%%%%%%%%%%%%%%%%%%%%%%%%%%%%%%%%%%%%%%%%%%%%%%%%%%%%%%%%%%%%%%%%%%%%%%%%%
%%%%%%%%%%%%%%%%%%%%%%%%%%%%%%%%%%%%%%%%%%%%%%%%%%%%%%%%%%%%%%%%%%%%%%%%%%%%%%%%%%%
%%%%%%%%%%%%%%%%%%%%%%%%%%%%%%%%%%%%%%%%%%%%%%%%%%%%%%%%%%%%%%%%%%%%%%%%%%%%%%%%%%%
\section{Local existence and blow-up criterion}

\subsection{Local existence} We first consider the chemotaxis system coupled with the
Navier-Stokes equations in two and three dimensions.
We show the local existence of solutions $(n,\, c,\, u)$ in $H^{m-1} \times
H^{m} \times H^{m}$ space with $m \ge 3$.
 \begin{pfthm1}
We construct the solution sequence $(n^{j},\, c^{j}, \, u^{j})_{j
\geq 0}$ by iteratively solving the Cauchy problems on the following
linear equations
\begin{equation}\label{linear-eq-1}
\left\{\begin{array}{ll}  \partial_t n^{j+1}+u^{j} \cdot \nabla
n^{j+1} = \Delta n^{j+1} - \nabla \cdot ( \chi(c^{j}) n^{j+1} \nabla
c^{j}),\\
\vspace{-3mm}\\
\partial_t c^{j+1} +u^{j} \cdot \nabla c^{j+1} =\Delta c^{j+1}
-k(c^{j}) n^{j},\\
\vspace{-3mm}\\
\partial_t u^{j+1} +u^{j} \cdot \nabla u^{j+1} +\nabla p^{j+1}
=\Delta u^{j+1} -n^{j} \nabla \phi,\qquad \nabla \cdot u^{j+1}=0.
\end{array}\right.
\end{equation}
%In the above, we put the nonnegative viscosity coefficient $\nu$ including the case $\nu=0$.

We first set $(n^0(x,t),\, c^0(x,t),\, u^0(x,t))=(n_0(x),\,
c_0(x),\, u_0(x))$. Then, using the same initial data to solve the
linear Stokes type equations and the linear parabolic equations, we
obtain $u^1 (x,t)$, $c^1(x,t)$ and $n^1 (x,t)$, respectively.
%Afterward, we can obtain $n^1 (x,t)$ by solving the linear parabolic equation.
Similarly, we define $(n^j(x,t),\, c^j(x,t),\, u^j(x,t))$
iteratively. For this, we presume that $c^j$ and $n^j$ are
nonnegative and show the existence and the convergence of
solutions in the adequate function spaces. We show the nonnegativity
of $c^j$ and $n^j$ at the end of the proof.

To prove the conclusion, i.e., to obtain contraction in adequate
function spaces, we show the uniform boundness of the sequence of
functions under our
construction via energy estimates.\\
$\bullet$\, (Uniform boundedness)\,\, We here show that the
iterative sequences $(n^j,\, c^j, u^j)$ are in $X_m :=H^{m-1}\times
H^{m} \times H^{m}$ space for all $j\geq 0$. Observing that
\[
\sum_{ |\alpha| \le m-1}\| \partial^{\alpha} (u^{j}
 n^{j+1})
\|_{L^2}
\le C (\|u^j \|_{L^{\infty}} \|
n^{j+1}\|_{H^{m-1}}+\|  n^{j+1}\|_{L^{\infty}} \|
u^{j}\|_{H^{m-1}}),
\]
\[
\norm{ \chi(c^{j}) n^{j+1} \nabla c^{j} }_{H^{m-1}} \leq C \norm{
n^{j+1}}_{H^{m-1}} \left( 1+ \norm{c^{j}}_{H^{m}}^{m}   \right),
\]
\[
\norm{ k(c^{j}) n^{j} }_{H^{m-1}} \leq C \norm{n^{j}}_{H^{m-1}} \left(1+ \norm{c^{j}}_{H^{m}}^{m-1}\right),
\]
we have
the following energy estimates:\\
$(i)$\,\, The estimate of $n^{j+1}$
\[
\frac12 \frac{d}{dt} \norm{n^{j+1}}_{H^{m-1}}^2 + \norm{ \nabla
n^{j+1}}_{H^{m-1}}^2 \leq C \norm{  u^{j} }_{L^{\infty}} \norm{
n^{j+1} }_{H^{m-1}}\norm{\na n^{j+1}}_{H^{m-1}}
\]
\[
+ C \norm{u^{j}}_{H^{m-1}} \norm{n^{j+1}}_{H^{m-1}}\norm{\na
n^{j+1}}_{H^{m-1}} +C \left( 1+ \norm{c^{j}}_{H^{m}}^{m}
\right)\norm{ n^{j+1}}_{H^{m-1}}\norm {\nabla n^{j+1}}_{H^{m-1}}
\]
\begin{equation}\label{eq-n-j-1}
\leq C\left(1+ \norm{u^j}_{H^{m}}^2+\norm{c^{j}}_{H^m}^{2m}
\right)\norm{n^{j+1}}_{H^{m-1}}^2 + \frac12 \norm{ \na
n^{j+1}}_{H^{m-1}}^2.
\end{equation}
$(ii)$\,\, The estimate of $c^{j+1}$
\[
\frac12 \frac{d}{dt} \norm{c^{j+1}}_{H^{m}}^2 + \norm{\nabla
c^{j+1}}_{H^{m}}^2 \leq C \norm { u^{j} }_{L^{\infty}} \norm{c^{j+1}
}_{H^{m}}\norm{\nabla c^{j+1}}_{H^{m}}
\]
\[
+C \norm {u^j }_{H^{m}} \norm {c^{j+1}}_{H^{m}}\norm{\nabla
c^{j+1}}_{H^{m}} +C  \left(1+
\norm{c^{j}}_{H^{m}}^{m-1}\right)\norm{n^{j}}_{H^{m-1}}\norm{ \nabla
c^{j+1} }_{H^{m}}
\]
\begin{equation}\label{eq-c-j-1}
\leq C\left(1+ \norm{c^{j}}_{H^m}^{2(m-1)}
\right)\norm{n^{j}}_{H^{m-1}}^2 +
C\norm{u^{j}}_{H^m}^2\norm{c^{j+1}}_{H^m}^2 + \frac12 \norm{ \na
c^{j+1}}_{H^m}^2.
\end{equation}
$(iii)$\,\, The estimate of $u^{j+1}$
\[
\frac12 \frac{d}{dt} \norm{ u^{j+1}}_{H^{m}}^2 +\norm{\nabla
u^{j+1}}_{H^{m}}^2 \leq C \norm {\nabla u^j}_{L^{\infty}} \norm
{u^{j+1}}_{H^{m}}^2
\]
\[
+C \norm{u^j}_{H^{m}}\norm{\nabla u^{j+1}}_{L^{\infty}}
\norm{u^{j+1}}_{H^{m}} +C \norm{n^j}_{H^{m-1}} \norm{\na
u^{j+1}}_{H^{m}}
\]
\begin{equation}\label{eq-u-j-1}
\leq C \norm{u^{j}}_{H^m}^2 \norm{u^{j+1}}_{H^m}^2 + C
\norm{n^{j}}_{H^{m-1}}^2+ \frac12 \norm{ \na u^{j+1}}_{H^m}^2,
\end{equation}
where standard commutator estimates are used.
%In \eqref{eq-c-j-1}, we used
%\[
%\sum_{|\alpha| \leq m }\int \partial^{\alpha} (u^j \cdot \nabla c^{j+1} ) \partial^{\alpha}c^{j+1} = \sum_{ 1 \leq |\beta|, \beta < \alpha} \int \partial^{\beta} u^j\cdot \nabla \partial^{\alpha} c^{j+1} \partial^{\alpha-\beta} c^{j+1}+ \int \partial^{\alpha} u^j \cdot \nabla c^{j+1} \partial^{\alpha}c^{j+1}
%\]
%\[
%= \sum_{1 \leq |\beta|, \beta < \alpha} \int \partial^{\beta} u^{j} \cdot \nabla \partial^{\alpha} c^{j+1} \partial^{\alpha- \beta} c^{j+1}
%- \sum_{ |\gamma|=1} \int \partial^{\alpha- \gamma}u^{j} \cdot
%\nabla \partial^{\gamma} c^{j+1} \partial^{\alpha} c^{j+1} - \int
%\partial^{\alpha-\gamma} u^{j}\cdot \nabla c^{j+1}
%\partial^{\alpha+\gamma}c^{j+1}.
%\]
%Hence,
%\[
%\sum_{|\alpha| \leq m }\left|\int \partial^{\alpha} (u^j \cdot
%\nabla c^{j+1} )  \partial^{\alpha}c^{j+1}\right| \leq C\norm {u^j
%}_{H^{m}} \norm {c^{j+1}}_{H^{m}}\norm{\nabla c^{j+1}}_{H^{m}}.
%\]
We show that there exists a constant $M>0$ such that, for any
$j$, the following inequality holds for a small time interval $[0,\,
T]$ ($T$ will be specified later):
\[
\sup_{ 0 \leq t \leq T}\left( \norm{ n^{j}}_{H^{m-1}}^2
+\norm{c^{j}}_{H^{m}}^2 +\norm{u^{j}}_{H^m}^2  \right)
\]
\begin{equation}\label{ineq-uniform}
%\sup_{ 0 \leq t \leq T}\left( \norm{ n^{j}}_{H^{m-1}}^2
%+\norm{c^{j}}_{H^{m}}^2 +\norm{u^{j}}_{H^m}^2  \right)
+ \int_0^T\norm{\na n^{j}}_{H^{m-1}}^2 + \norm{ \na c^j}_{H^{m}}^2
+\norm{\na u^j}_{H^m}^2 dt \leq M.
\end{equation}
Here $M$ is a number with $M\geq 4(\norm{ n_0}_{H^{m-1}}^2
+\norm{c_0}_{H^{m}}^2 +\norm{u_0}_{H^m}^2)$.

We prove \eqref{ineq-uniform} via an inductive argument. Suppose
\eqref{ineq-uniform} hold for $j \leq i$.
%We then show the $i+1$-th step.
If we add \eqref{eq-n-j-1}, \eqref{eq-c-j-1}, and
\eqref{eq-u-j-1} and use Young's inequality, then we have
\[
\frac{d}{dt} ( \norm{n^{i+1}}_{H^{m-1}}^2 + \norm{c^{i+1}}_{H^{m}}^2
+ \norm{u^{i+1}}_{H^{m}}^2) + \norm{\na n^{i+1}}_{H^{m-1}}^2
+ \norm{\na c^{i+1}}_{H^{m}}^2+ \norm{ \na u^{i+1}}_{H^m}^2
\]
\[
\leq C\left(1+ \norm{u^i}_{H^{m}}^2+\norm{c^{i}}_{H^m}^{2m}
\right)\norm{n^{i+1}}_{H^{m-1}}^2 +C\left(1+
\norm{c^{i}}_{H^m}^{2(m-1)}  \right)\norm{n^{i}}_{H^{m-1}}^2
\]
\[
+ C\norm{u^{i}}_{H^m}^2\norm{c^{i+1}}_{H^m}^2+C \norm{u^{i}}_{H^m}^2
\norm{u^{i+1}}_{H^m}^2+ C \norm{n^{i}}_{H^{m-1}}^2
\]
\[
\leq C(1+M+M^{m}) \norm{n^{i+1}}_{H^{m-1}}^2+CM
\norm{c^{i+1}}_{H^m}^2
\]
\[
+ CM\norm{u^{i+1}}_{H^m}^2 +C(1+M^{m-1})M+CM.
\]
In the last inequality, we use the induction hypothesis. Hence,
we get
\[
\frac{d}{dt} ( \norm{n^{i+1}}_{H^{m-1}}^2 + \norm{c^{i+1}}_{H^{m}}^2
+ \norm{u^{i+1}}_{H^{m}}^2)+ \norm{\na n^{i+1}}_{H^{m-1}}^2 +
\norm{\na c^{i+1}}_{H^{m}}^2+ \norm{ \na u^{i+1}}_{H^m}^2
\]
\begin{equation}\label{ineq-induction}
\leq C(1+M+M^{m})( \norm{n^{i+1}}_{H^{m-1}}^2 +
\norm{c^{i+1}}_{H^{m}}^2 + \norm{u^{i+1}}_{H^{m}}^2) +C(1+M^{m-1})M.
\end{equation}
We choose time $T$ such that $\max \{C(1+M +M^{m})T,\,\,
C(1+M^{m-1})T\}\leq \frac14$. Then from Gronwall's inequality,
we have \eqref{ineq-uniform}. In short, we have $(n^{j+1},\,
c^{j+1}, \, u^{j+1}) \in L^{\infty}(0,\,T; X_m)$ and $(\nabla
n^{j+1},\,\nabla c^{j+1}, \, \nabla u^{j+1}) \in L^{2}(0,\,T; X_m)$
and the uniform bound \eqref{ineq-uniform} for small $T>0$.

Also if we multiply $(c^{j+1})^{q-1}$ on the both sides of the
second equation of \eqref{linear-eq-1} and integrate over spatial
varaibles, then we obtain
\[
\frac{1}{q} \frac{d}{dt} \| c^{j+1}\|_{L^q}^q
+\frac{4(q-1)}{q^2} \| \nabla (c^{j+1})^{\frac{q}{2}}\|_{L^2}^2 \le 0.
\]
Thus, the $L^{\infty}$ norm of $c^{j+1}$ is uniformly bounded, which
implies that $\chi(c^{j})$ and $k(c^{j})$ are uniformly
bounded for all $j\geq 0$.\\
\\
$\bullet$ (Contraction) The estimate of this part is similar to that
of the previous one. For convenience, we denote $\delta
f^{j+1}:=f^{j+1}-f^{j}$. Subtracting the $j$-th equations from
the $(j+1)$-th equations, we have the following equations for $\delta n^{j+1}$,
$\delta c^{j+1}$ and $\delta u^{j+1}$ :
\begin{equation}\label{approximat-linear-eq-1}
\left\{\begin{array}{l}
\partial_t\delta  n^{j+1}+u^{j} \cdot
\nabla \delta n^{j+1}-\Delta \delta n^{j+1} =-\delta u^{j} \cdot
\nabla n^{j}-\nabla \cdot(
\chi(c^{j}) \delta n^{j+1} \nabla c^{j})\\
\qquad\qquad\qquad\qquad\qquad\qquad\qquad\,\,\quad-\nabla \cdot (
\chi(c^{j}) n^{j} \nabla c^{j})
+\nabla \cdot ( \chi(c^{j-1}) n^{j} \nabla c^{j-1}),\\
\vspace{-2mm}\\
\partial_t \delta c^{j+1} +u^{j} \cdot \nabla \delta c^{j+1}
-\Delta \delta c^{j+1}=-\delta u^{j} \cdot \nabla c^{j}
-k(c^{j})\delta n^{j}+(k(c^{j})-k(c^{j-1})) n^{j-1},\\
\vspace{-2mm}\\
\partial_t \delta u^{j+1} +u^{j} \cdot \nabla \delta u^{j+1}
+\nabla \delta p^{j+1}
-\Delta \delta u^{j+1}=-\delta u^{j} \cdot \nabla u^{j} -\delta n^{j} \nabla \phi,\\
\nabla \cdot\delta  u^{j+1}=0.
\end{array}\right.
\end{equation}
$(i)$\,\, The estimate of $\delta n^{j+1}$.\\
Using the following standard commutator estimates
\[
\sum_{ |\alpha| \le m-1}\int [\partial^{\alpha} (u^{j} \cdot\nabla
\delta n^{j+1}) \partial^{\alpha}\delta n^{j+1}- (u^{j} \cdot\nabla
\partial^{\alpha}\delta n^{j+1}) \partial^{\alpha}\delta n^{j+1}]
\]
\[
\le C (\| \nabla u^{j} \|_{L^{\infty}} \| \delta
n^{j+1}\|^2_{H^{m-1}}+\| u^{j}\|_{H^{m-1}}\|\nabla\delta
n^{j+1}\|_{L^{\infty}}\| \delta n^{j+1}\|_{H^{m-1}}),
\]
%we estimate $\delta n^{j+1}$ in $H^{m-1}$ space as follows:
we have the following estimate:
\[
\frac12 \frac{d}{dt} \| \delta n^{j+1} \|_{H^{m-1}}^2+ \| \nabla
\delta n^{j+1}\|_{H^{m-1}}^2\le C \| \nabla u^{j} \|_{L^{\infty}} \|
\delta n^{j+1} \|_{H^{m-1}}^2
\]
\[
+C \|  u^{j} \|_{H^{m-1}} \| \delta n^{j+1} \|_{H^{m}}\| \delta
n^{j+1} \|_{H^{m-1}} +C \|  \delta u^{j} \|_{L^{\infty}} \| n^j
\|_{H^{m-1}} \| \nabla \delta n^{j+1}\|_{H^{m-1}}
\]
\[
+C \|  \delta u^j \|_{H^{m-1}} \|  n^{j} \|_{L^{\infty}} \| \nabla
\delta n^{j+1} \|_{H^{m-1}}+ C(\| c^j\|_{H^{m}}+\|
c^j\|_{H^{m}}^{m-1})  \| \delta n^{j+1}\|_{H^{m-1}} \| \nabla \delta
n^{j+1}\|_{H^{m-1}}\]
\[
+C\| n^j \|_{H^{m-1}}(1+ \| c^{j}\|_{H^m}^{m-1}+\|
c^{j-1}\|_{H^m}^{m-1}) \|\delta c^{j} \|_{H^m} \| \nabla \delta
n^{j+1}\|_{H^{m-1}}.
\]
\[
\leq C (\| u^{j} \|_{H^{m}}+\| u^{j} \|^2_{H^{m}}) \| \delta n^{j+1}
\|_{H^{m-1}}^2+C \|  \delta u^j \|^2_{H^{m}}\| n^j \|^2_{H^{m-1}}+
C(\| c^j\|^2_{H^{m}}+\| c^j\|_{H^{m}}^{2(m-1)})  \| \delta
n^{j+1}\|^2_{H^{m-1}}
\]
\[
+C\| n^j \|^2_{H^{m-1}}(1+ \| c^{j}\|_{H^m}^{2(m-1)}+\|
c^{j-1}\|_{H^m}^{2(m-1)}) \|\delta c^{j} \|^2_{H^m} +\frac{1}{2}\|
\nabla \delta n^{j+1}\|_{H^{m-1}}^2.
\]
%Similarly, we can estimate $\delta c^{j+1}$  as
%follows:
$(ii)$\,\, The estimate of $\delta c^{j+1}$.
\[
\frac12 \frac{d}{dt} \| \delta c^{j+1} \|_{H^{m}}^2+ \| \nabla \delta
c^{j+1}\|_{H^{m}}^2
%\]
%\[
\le C \|  u^{j} \|_{L^{\infty}}\| \delta c^{j+1} \|_{H^{m}}\|\nabla
\delta c^{j+1} \|_{H^m}
\]
\[
+C \|  u^j \|_{H^{m-1}} \| \delta c^{j+1} \|_{H^{m}} \| \nabla\delta
c^{j+1}\|_{H^{m}} +C \|  \delta u^j \|_{L^{\infty}} \|  c^j
\|_{H^{m}} \|\nabla \delta c^{j+1}\|_{H^{m}}
\]
\[
+C \|  c^j \|_{H^m} \|  \delta u^j \|_{H^{m-1}} \| \nabla\delta
c^{j+1} \|_{H^{m}}+C(1+ \| c^j \|_{H^m}^{m-1})\| \delta
n^{j}\|_{H^{m-1}}\| \nabla \delta c^{j+1}\|_{H^m}
\]
\[
+C\| n^{j-1}\|_{H^{m-1}}(1+ \|  c^{j}\|_{H^m}^{m-1}+\|
c^{j-1}\|_{H^{m}}^{m-1})
 \| \delta c^{j} \|_{H^{m}} \|\nabla \delta c^{j+1} \|_{H^m}
\]
\[
\leq C \|  u^j \|^2_{H^{m}} \| \delta c^{j+1} \|^2_{H^{m}}+C \|  c^j
\|^2_{H^m} \|  \delta u^j \|^2_{H^{m}}+C(1+ \| c^j
\|_{H^m}^{2(m-1)})\| \delta n^{j}\|^2_{H^{m-1}}
\]
\[
+C\| n^{j-1}\|^2_{H^{m-1}}(1+ \|  c^{j}\|_{H^m}^{2(m-1)}+\|
c^{j-1}\|_{H^{m}}^{2(m-1)})
 \| \delta c^{j} \|^2_{H^{m}}+\frac{1}{2}\| \nabla\delta
 c^{j+1}\|^2_{H^{m}},
\]
where we used the Mean Value Theorem for the last term.\\
%We estimate $\delta u^{j+1}$ as follows:
$(iii)$\,\, The estimate of $\delta u^{j+1}$.
\[
\frac12 \frac{d}{dt} \| \delta u^{j+1} \|_{H^{m}}^2+ \| \nabla
\delta u^{j+1}\|_{H^{m}}^2\le C \| \nabla u^{j} \|_{L^{\infty}}\|
\delta u^{j+1} \|_{H^{m}}^2
\]
\[
+C \|  u^j \|_{H^{m}} \| \nabla \delta u^{j+1} \|_{L^{\infty}} \|
\delta u^{j+1}\|_{H^{m}}+C \| \delta u^j\|_{L^{\infty}} \| \nabla
u^{j}\|_{H^{m-1}} \| \delta u^{j+1}\|_{H^{m+1}}
\]
\[
+C \| \delta u^{j} \|_{H^{m-1}} \| \nabla u^{j} \|_{L^{\infty}} \|
 \delta u^{j+1}\|_{H^{m+1}} +C  \| \delta n^{j} \|_{H^{m-1}} \|  \delta
u^{j+1}\|_{H^{m+1}}
\]
\[
\leq C \|  u^j \|_{H^{m}}\| \delta u^{j+1} \|_{H^{m}}^2+C\|  u^j
\|^2_{H^{m}}\| \delta u^{j} \|_{H^{m}}^2+C  \| \delta n^{j}
\|^2_{H^{m-1}}+\frac{1}{2} \|  \delta u^{j+1}\|^2_{H^{m+1}},
\]
where similar standard commutator estimates are used as in the case
of $\delta n^{j+1}$. Using Young's inequality, we have
\[
\frac{d}{dt}( \| \delta n^{j+1} \|_{H^{m-1}}^2+\| \delta
c^{j+1} \|_{H^m}^2 +\| \delta u^{j+1} \|_{H^m}^2   ) +\| \nabla
\delta n^{j+1}\|_{H^{m-1}}^2+ \| \nabla \delta c^{j+1}\|_{H^m}^2+ \|
\nabla \delta u^{j+1}\|_{H^m}^2
\]
\[
\leq C \| \delta n^{j+1}\|_{H^{m-1}}^2 +C \| \delta
c^{j+1}\|_{H^m}^2+C \| \delta u^{j+1}\|_{H^{m}}^2 +C \| \delta n^j
\|_{H^{m-1}}^2+C \|\delta c^j \|_{H^{m}}^2+C \|\delta u^j
\|_{H^{m}}^2,
\]
where $C$ depend on the $H^{m-1} \times H^m \times H^m$ norm of
($n^{j}$, $c^{j}$, $u^{j}$) and ($n^{j-1}$, $c^{j-1}$, $u^{j-1}$)
and the maximum values of $ \chi^{(i)}$ and $k^{(i)}$. Gronwall's
inequality gives us
\[
\sup_{ 0\le t \le T}( \| \delta n^{j+1} \|_{H^{m-1}}^2+\| \delta
c^{j+1} \|_{H^m}^2 +\| \delta u^{j+1} \|_{H^m}^2   )
\]
\[
\leq CT\exp \left( CT  \right)\sup_{ 0\le t \le T}( \| \delta n^{j}
\|_{H^{m-1}}^2+\| \delta c^{j} \|_{H^m}^2 +\| \delta u^{j}
\|_{H^m}^2   ).
\]
From the above inequality, we find that $( n^j,\, c^j,\, u^j)$ is a
Cauchy sequence in the Banach space $C(0, T;X_m)$ for some small
$T>0$, and thus we have the limit in the same
space.\\
\\
$\bullet$\,\,(Uniqueness)\,\, To show the uniqueness of the above
local-in-time solution, we assume that there exist two local-in-time
solutions $(c_1(x,t),\, n_1(x,t),\, u_1(x,t))$ and $(c_2(x,t),\,
n_2(x,t),\, u_2(x,t))$ of \eqref{KSNS} with the same initial data
over the time interval $[0,T]$, where $T$ is any time before the maximal
time of existence. Let $\tilde{c}(x,t):=c_1(x,t)-c_2(x,t)$,
$\tilde{n}(x,t):=n_1(x,t)-n_2(x,t)$, and
$\tilde{u}(x,t):=u_1(x,t)-u_2(x,t)$. Then  $(\tilde{c},\,
\tilde{n},\, \tilde{u})$ solves
\begin{equation}\label{KSNS-unique}
\left\{
\begin{array}{ll}
\partial_t \tilde{n} + u_1 \cdot \nabla  \tilde{n} - \Delta \tilde{n}
= -\tilde{u} \cdot \nabla n_2 -\nabla\cdot ((\chi (c_1)-\chi(c_2)) n_1 \nabla c_1)\\
\qquad\qquad -\nabla \cdot (\chi(c_2) \tilde{n} \nabla c_1)-\nabla \cdot (\chi(c_2) n_2 \nabla \tilde{c}),\\
\vspace{-2mm}\\
\partial_t \tilde{c} + u_1 \cdot \nabla \tilde{c}-\Delta \tilde{c}
=-\tilde{u} \cdot \nabla c_2-(k(c_1)-k(c_2)) n_1 -k(c_2) \tilde{n},\\
\vspace{-2mm}\\
\partial_t \tilde{u} + u_1\cdot \nabla \tilde{u} -\Delta \tilde{u} +\nabla \tilde{p}
=-\tilde{u} \cdot \nabla u_2-\tilde{n} \nabla \phi,\\
\nabla \cdot \tilde{u}=0, %\qquad t \in (0,\, T], \qquad x \in \R^d
\end{array}
\right.
 \quad\mbox{ in }\,\, (x,t)\in \R^d\times (0,\, T].
\end{equation}
Multiplying $\tilde{n}$ to both sides of the first equation of
\eqref{KSNS-unique} and integrating over $\R^d$, we have
\[
\frac12 \frac{d}{dt} \| \tilde{n} \|_{L^2}^2 + \| \nabla \tilde{n} \|_{L^2}^2 \leq \| \tilde{u} n_2 \|_{L^2} \| \nabla \tilde{n} \|_{L^2}+ \| (\chi(c_1)-\chi(c_2)) n_1 \nabla c_1 \|_{L^2} \| \nabla \tilde{n} \|_{L^2}
\]
\[
 +\| \chi(c_2) \tilde{n} \nabla c_1 \|_{L^2} \| \nabla \tilde{n} \|_{L^2}+ \| \chi(c_2) n_2 \nabla \tilde{c} \|_{L^2} \| \nabla \tilde{n}\|_{L^2}
\]
\[
\leq C \| \tilde{u} \|_{L^2}^2 \| n_2 \|_{L^{\infty}}^2+C \| \tilde{c} \|_{L^2}^2 \| n_1 \|_{L^{\infty}}^2 \| \nabla c_1 \|_{L^{\infty}}^2 +C\| \tilde{n} \|_{L^2}^2 \| \nabla c_1 \|_{L^{\infty}}^2
\]
\[
+C\| \nabla \tilde{c} \|_{L^2}^2 \| n_2 \|_{L^{\infty}}^2+ \epsilon \| \nabla \tilde{n} \|_{L^2}^2.
\]
Multiplying $\tilde{c}$ and $-\Delta \tilde{c}$ to both sides of the second equation of \eqref{KSNS-unique},  we have
\[
\frac12 \frac{d}{dt} \| \tilde{c} \|_{L^2}^2 + \| \nabla \tilde{c} \|_{L^2}^2 \leq \| \tilde{u} c_2 \|_{L^2} \| \nabla \tilde{c} \|_{L^2}
+C \| \tilde{c} \|_{L^2}^2 \| n_1 \|_{L^{\infty}}  +C \| \tilde{n} \|_{L^2} \| \tilde{c} \|_{L^2}
\]
\[
\leq C\| \tilde{u}\|_{L^2}^2 \| c_2 \|_{L^{\infty}}^2 +C \| \tilde{c} \|_{L^2}^2 \| n_1 \|_{L^{\infty}}+C\| \tilde{n}\|_{L^2}^2+C\| \tilde{c} \|_{L^2}^2 + \epsilon \| \nabla \tilde{c} \|_{L^2}^2,
\]
and
\[
\frac12 \frac{d}{dt} \| \nabla \tilde{c} \|_{L^2}^2 + \| \Delta \tilde{c} \|_{L^2}^2 \leq \| u_1 \cdot \nabla \tilde{c} \|_{L^2} \| \Delta \tilde{c} \|_{L^2} + \| \tilde{u} \cdot \nabla c_2 \|_{L^2} \| \Delta \tilde{c} \|_{L^2}
\]
\[
+\| k(c_1)-k(c_2) \|_{L^2} \| n_1 \|_{L^{\infty}} \| \Delta \tilde{c} \|_{L^2} + C\| \tilde{n} \|_{L^2} \| \Delta \tilde{c} \|_{L^2}
\]
\[
\leq C\|\nabla \tilde{c} \|_{L^2}^2 \|u_1 \|_{L^{\infty}}^2 + C \| \tilde{u}\|_{L^2}^2 \| \nabla c_2 \|_{L^{\infty}}^2 + C\| \tilde{c}\|_{L^2}^2 \| n_1 \|_{L^{\infty}}^2 +C\| \tilde{n}\|_{L^2}^2+\epsilon \| \Delta \tilde{c} \|_{L^2}^2.
\]
Multiplying $\tilde{u}$ to both sides of the third equation of
\eqref{KSNS-unique} and integrating over $\R^d$, we have
\[
\frac12 \frac{d}{dt} \| \tilde{u} \|_{L^2}^2 + \| \nabla \tilde{u} \|_{L^2}^2 \leq \| \tilde{u} \|_{L^2} \| u_2 \|_{L^{\infty}} \| \nabla \tilde{u} \|_{L^2} + C \| \tilde{n} \|_{L^2} \| \tilde{u} \|_{L^2}
\]
\[
\leq C \| \tilde{u} \|_{L^2}^2 \| u_2 \|_{L^{\infty}}^2 +C \|
\tilde{n } \|_{L^2}^2 +C \| \tilde{u}\|_{L^2}^2 +\epsilon \| \nabla
\tilde{u} \|_{L^2}^2.
\]
Summing the above estimates, we obtain
\[
\frac12 \frac{d}{dt} ( \| \tilde{n} \|_{L^2}^2 + \| \tilde{c} \|_{L^2}^2
+\| \nabla \tilde{c} \|_{L^2}^2 + \| \tilde{u} \|_{L^2}^2)\]
\[
\leq C(\| \abs{\nabla c_1}+\abs{\nabla c_2} \|_{L^{\infty}}^2+\|
n_1+n_2 \|_{L^{\infty}}^2+\|n_1\|_{L^{\infty}}^2 \| \nabla c_1
\|_{L^{\infty}}^2+\|c_2 \|_{L^{\infty}}^2+\|\abs{u_1}+\abs{u_2}
\|_{L^{\infty}}^2+1)
\]
\[
\times ( \| \tilde{n} \|_{L^2}^2 + \| \tilde{c} \|_{L^2}^2+\| \nabla
\tilde{c} \|_{L^2}^2 + \| \tilde{u} \|_{L^2}^2).\] Since all
$L^{\infty}$ norms of $( n_i, c_i, u_i)$ are controlled by  $H^{m-1}\times H^{m}\times H^{m}$ norm of $(n_i, c_i, u_i)$ with
$m \geq 3$, and the initial data of $(\tilde{c}, \tilde{n},
\tilde{u})$ are all zero, $(\tilde{c}, \tilde{n}, \tilde{u})$ are
all zero for $T>0$. That implies the uniqueness of the local
classical solution. \\

$\bullet$ (Nonnegativity) For completeness, we briefly show that $n^j$ and $c^j$ are nonnegative for all $j$. To use induction, we assume $c^j$ and $n^j$ are nonnegative. If we apply the maximum principle to the equation of $c^{j+1}$ in \eqref{linear-eq-1}, we  find that $c^{j+1}$ is nonnegative ($k(c^j)n^j$ is nonnegative).
Let us decompose $n^{j+1} =n^{j+1}_+ -n^{j+1}_-$, where
\begin{align*}
n^{j+1}_+=
\begin{cases}
n^{j+1}  \quad n^{j+1}\ge 0 \\
0\,\,\, \qquad n^{j+1} < 0,
\end{cases}\qquad
n^{j+1}_-=
\begin{cases}
-n^{j+1}  \quad n^{j+1}\le 0 \\
\quad 0\,\,\, \qquad n^{j+1} > 0.
\end{cases}
\end{align*}
Recall  that the weak derivative of   $n^{j+1}_-$ is $-\na n^{j+1}$
if $n^{j+1}_ -< 0,$ otherwise zero. It holds that
\[
\int_0^t \int_{\bbr^3} \pa_t n^{j+1}( n^{j+1})_- dxds =
 \frac 12 (\|( n^{j+1})_-(0)\|_{L^2} - \|( n^{j+1})_-(0)\|_{L^2})
 \]
since $n^{j+1}_-,  \pa_t n^{j+1} \in L^2(0,T; L^2(\bbr^2))$ (see
e.g. \cite{Ta}). Now multiplying the negative part $(n^{j+1})_{-}$
on both sides of the first equation of \eqref{linear-eq-1} and
integrating over $[0,t]\times \bbr^2$, we have
%\[
%\frac12 \frac{d}{dt} \| (n^{j+1})_{-}\|_{L^2}^2 +\| \nabla (n^{j+1})_{-}\|_{L^2}^2 \leq C \| %(n^{j+1})_{-} \|_{L^2}^2 \| \nabla c^{j} \|_{L^{\infty}}^2+\frac12 \| \nabla (n^{j+1})_{-} \|_{L^2}^2.
%\]
\[
\frac12\int_0^t  \| (n^{j+1})_{-}\|_{L^2}^2 +\| \nabla (n^{j+1})_{-}\|_{L^2}^2 ds
\leq C \int_0^t \| (n^{j+1})_{-} \|_{L^2}^2 \| \nabla c^{j} \|_{L^{\infty}}^2+\frac12 \| \nabla (n^{j+1})_{-} \|_{L^2}^ 2 ds.
\]
Using Gronwall's inequality, we have
\[
\| (n^{j+1})_{-} (t) \|_{L^2}^2 \leq \| (n^{j+1})_{-}(0) \|_{L^2}^2 \exp \left( C \int_0^t \| \nabla c^{j} \|_{L^{\infty}}^{2} ds   \right).
\]
Since the initial data $n^{j+1}_0$ is nonnegative, we conclude that $n^{j+1}$ is nonnegative.
This completes the proof.
\end{pfthm1}

%%%%%%%%%%%%%%%%%%%%%%%%%%%%%%%%%%%%%%%%%%%%%%%%%%%%%%%%%%%%%%%%%%%%%%%%%%%%%%%%%%%%%%%%%%%%%%%%%%%%%%%%%%%%%%%%%%%%%%%%%%
%%%%%%%%%%%%%%%%%%%%%%%%%%%%%%%%%%%%%%%%%%%%%%%%%%%%%%%%%%%%%%%%%%%%%%%%%%%%%%%%%%%%%%%%%%%%%%%%%%%%%%%%%%%%%%%%%%%%%%%%%%
%%%%%%%%%%%%%%%%%%%%%%%%%%%%%%%%%%%%%%%%%%%%%%%%%%%%%%%%%%%%%%%%%%%%%%%%%%%%%%%%%%%%%%%%%%%%%%%%%%%%%%%%%%%%%%%%%%%%%%%%%%
\subsection{Blow-up criterion}
Next, we observe a blow-up criterion for the fluid chemotaxis
equations.
\begin{proposition}\label{prop1}
(A Blow-up criterion) Suppose that $\chi$, $k$, $\phi$ and the initial
data $(n_0, c_0, u_0)$ satisfy all the assumptions presented in
Theorem \ref{Theorem2}. If\, $T<\infty$ is the maximal time of
existence, then
\[
\int_0^T \bke{\| \nabla u(t) \|_{L^{\infty}(\R^d)} +\| \na c(t)
\|_{L^{\infty}(\R^d)}^2} dt= \infty.
\]
\end{proposition}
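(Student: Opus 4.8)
I would prove the contrapositive: assuming $\int_0^T\bke{\|\nabla u(t)\|_{L^\infty}+\|\nabla c(t)\|_{L^\infty}^2}\,dt<\infty$, show that $\sup_{0\le t<T}\|(n(t),c(t),u(t))\|_{X_m}^2+\int_0^T\|(n(t),c(t),u(t))\|_{X_{m+1}}^2\,dt<\infty$; by the Remark following Theorem~\ref{Theorem2} this is impossible when $T<\infty$ is the maximal time of existence, which gives the Proposition. Note first that $\int_0^T\|\nabla c\|_{L^\infty}\,dt\le\tfrac12T+\tfrac12\int_0^T\|\nabla c\|_{L^\infty}^2\,dt$, so $\|\nabla u\|_{L^\infty}$, $\|\nabla c\|_{L^\infty}$ and $\|\nabla c\|_{L^\infty}^2$ all lie in $L^1(0,T)$. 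Then I collect the low-order bounds that cost only these: testing the $c$-equation against $c^{q-1}$ and letting $q\to\infty$ (using $k(c)n\ge0$, $c\ge0$) gives $\|c(t)\|_{L^\infty}\le\|c_0\|_{L^\infty}$, so $\chi(c),\chi'(c),k(c),k'(c)$ stay bounded; testing the $n$-equation against $n$ (the transport term drops by $\nabla\cdot u=0$) gives $\tfrac12\ddt\|n\|_{L^2}^2+\|\nabla n\|_{L^2}^2=\int\chi(c)n\nabla c\cdot\nabla n\le C\|\nabla c\|_{L^\infty}\|n\|_{L^2}\|\nabla n\|_{L^2}$, so $n\in L^\infty(0,T;L^2)\cap L^2(0,T;H^1)$; testing the $u$-equation against $u$ then gives $u\in L^\infty(0,T;L^2)\cap L^2(0,T;H^1)$; and testing the $c$-equation against $-\Delta c$, using that $\int(u\cdot\nabla c)\Delta c$ is, after one integration by parts and $\nabla\cdot u=0$, bounded by $\|\nabla u\|_{L^\infty}\|\nabla c\|_{L^2}^2$, gives $c\in L^\infty(0,T;H^1)\cap L^2(0,T;H^2)$.

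Next comes the high-order estimate, which is the heart of the matter. Set $E:=\|n\|_{H^{m-1}}^2+\|c\|_{H^m}^2+\|u\|_{H^m}^2$ and $D:=\|\nabla n\|_{H^{m-1}}^2+\|\nabla c\|_{H^m}^2+\|\nabla u\|_{H^m}^2$. Applying $\partial^\alpha$ with $|\alpha|\le m-1$ to the $n$-equation and $|\alpha|\le m$ to the $c$- and $u$-equations, testing against $\partial^\alpha$ of the corresponding unknown and summing, I would estimate the transport terms through the commutator inequality $\|[\partial^\alpha,u\cdot\nabla]f\|_{L^2}\le C(\|\nabla u\|_{L^\infty}\|f\|_{H^{|\alpha|}}+\|u\|_{H^{|\alpha|}}\|\nabla f\|_{L^\infty})$ (discarding the symmetric part via $\nabla\cdot u=0$), while the chemotactic term $\nabla\cdot(\chi(c)n\nabla c)$, the coupling $-k(c)n$ and the forcing $-n\nabla\phi$ are integrated by parts so that the highest derivative falls on a dissipation factor. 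The resulting products are controlled by Moser and Gagliardo--Nirenberg estimates together with the low-order bounds just obtained, $\|c\|_{L^\infty}\le\|c_0\|_{L^\infty}$ and $\|\nabla^l\phi\|_{L^\infty}<\infty$; here the fact that $n$ is measured one derivative below $c$ and $u$ makes the homogeneities match and lets $D$ absorb the chemotactic cross-terms. After Young's inequality this should yield a Gronwall-type inequality $\ddt E+\tfrac12D\le\Phi(t)(1+E)$ with $\Phi(t)=C\bke{1+\|\nabla u(t)\|_{L^\infty}+\|\nabla c(t)\|_{L^\infty}^2}\in L^1(0,T)$; it is essential to keep $\|\nabla u\|_{L^\infty}$ only to the first power (it enters only through the transport commutators), which is precisely what the criterion controls.

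Given this, Gronwall gives $\sup_{0\le t<T}E(t)\le(1+E(0))\exp\bke{\int_0^T\Phi}<\infty$, and then integrating $\ddt E+\tfrac12D\le\Phi(1+E)$ over $[0,T)$ gives $\int_0^TD\,dt<\infty$. Since $\|(n,c,u)\|_{X_{m+1}}^2\le C(\|n\|_{L^2}^2+\|c\|_{L^2}^2+\|u\|_{L^2}^2)+CD$ and the $L^2$ norms are already bounded, also $\int_0^T\|(n,c,u)\|_{X_{m+1}}^2\,dt<\infty$; this contradicts the Remark after Theorem~\ref{Theorem2} and proves the Proposition.

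The step I expect to be the real obstacle is the closure of the high-order estimate, and within it the top-order part of $\nabla\cdot(\chi(c)n\nabla c)$ in the $n$-equation. After distributing derivatives the delicate piece is $\int\chi(c)\,n\,\partial^\alpha\nabla c\cdot\partial^\alpha\nabla n$ with $|\alpha|=m-1$: $\partial^\alpha\nabla n$ sits exactly at the $n$-dissipation level while $\partial^\alpha\nabla c$ is only at the $H^m$-level of $c$, so one must interpolate $\partial^\alpha\nabla c$ between $H^{m-1}$ and the $c$-dissipation $\|\nabla c\|_{H^m}$ and trade $\|n\|_{L^\infty}$ for a strictly sub-linear power of $\|n\|_{H^{m-1}}$ via Gagliardo--Nirenberg, so that Young's inequality leaves only terms absorbable into $\tfrac12D$ or into $\Phi(t)(1+E)$. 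If this bookkeeping proves too tight in the borderline case $m=3$, $d=3$, the standard remedy is to replace the plain Sobolev embedding by a logarithmic Sobolev inequality (Brezis--Gallouet / Kozono--Taniuchi type) and to conclude with a logarithmic Gronwall argument.
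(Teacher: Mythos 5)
Your overall strategy coincides with the paper's: argue by contrapositive, assume $\int_0^T(\|\nabla u\|_{L^\infty}+\|\nabla c\|_{L^\infty}^2)\,dt<\infty$, and propagate bounds by energy estimates plus Gronwall until the $X_m$ norm is controlled. Your low-order steps (the $L^2$ bound on $n$ costing only $\|\nabla c\|_{L^\infty}^2$, the $L^2$ bound on $u$, the $H^1$ bound on $c$ costing $\|\nabla u\|_{L^\infty}$) are exactly the paper's opening moves. The gap is in the high-order closure: you compress everything above $L^2\times H^1\times H^1$ into a single inequality $\ddt E+\tfrac12 D\le\Phi(t)(1+E)$ with $\Phi=C(1+\|\nabla u\|_{L^\infty}+\|\nabla c\|_{L^\infty}^2)\in L^1$, and this inequality is not attainable in one shot. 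The product/Moser estimate for the chemotactic term at level $m$ has the form $\|\chi(c)n\nabla c\|_{H^{m-1}}\le C(1+\|c\|_{H^m}+\|\nabla c\|_{L^\infty}^{m})\|n\|_{H^{m-1}}$ (the power $m$ comes from the chain-rule expansion of $\partial^\alpha\chi(c)$), so after squaring you face $\|\nabla c\|_{L^\infty}^{2m}$, which is not integrable given only $\nabla c\in L^2_tL^\infty_x$; similarly the transport commutators produce factors $\|\nabla n\|_{L^\infty}$ and $\|\nabla^2 c\|_{L^\infty}$, and the $H^1$ estimate of $n$ needs $\|n\|_{L^\infty}$ — none of these are controlled by the blow-up quantity, and interpolating them against $D$ yields superlinear powers of $E$ ($E^3$ and worse), which plain Gronwall cannot close. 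Your suggested remedies (Gagliardo--Nirenberg trading, Brezis--Gallouet logarithmic Gronwall) do not cure the $\|\nabla c\|_{L^\infty}^{2m}$ and $\|n\|_{L^\infty}$ issues.

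What actually closes the argument, and what the paper does, is an incremental bootstrap in which each stage supplies the uniform-in-time $L^\infty$ quantities that make the next stage's Gronwall coefficients integrable. Concretely: after the $L^2$ level one runs the $L^q$ estimate $\ddt\|n\|_{L^q}^q+\|\nabla n^{q/2}\|_{L^2}^2\le C\|\nabla c\|_{L^\infty}^2\|n\|_{L^q}^q$ with a constant independent of $q$ and lets $q\to\infty$ to get $n\in L^\infty_xL^\infty_t$; then one climbs $H^1\times H^2$, $H^2\times H^3\times H^3$, $H^3\times H^4\times H^4$ — the last of these yields $\|\nabla c\|_{L^\infty_tL^\infty_x}<\infty$ — and only then treats general $m\ge 5$, at which point $\|\nabla c\|_{L^\infty}^{2m}$, $\|n\|_{L^\infty}$, $\|\nabla n\|_{L^\infty}$ are all time-uniformly bounded and the Gronwall factor genuinely reduces to $C(1+\|\nabla u\|_{L^\infty}+\|\nabla c\|_{L^\infty}^2)$. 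You should restructure the heart of your argument as this staged induction rather than a single differential inequality.
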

\begin{proof}
At first, we consider the $L^2 $ estimate of $n$. Multiplying $n$ to
both sides of the equation of $n$ and integrating, we have
\[ \frac12 \frac{d}{dt} \| n \|_{L^2}^2 + \| \na n \|_{L^2}^2 \le C
\| \chi (c) n \na c \|_{L^2} \| \na n \|_{L^2}.
\]
Since $\chi$ is continuous and $c$ is uniformly bounded until the
maximal time of existence, we have
\[
C \| \chi (c) n \na c \|_{L^2} \| \na n \|_{L^2} \le \frac14 \| \na
n \|_{L^2}^2+ C \| \na c \|_{L^{\infty}}^2 \| n \|_{L^2}^2.
\]
For the estimates of $c$, we use the calculus inequality
\[
\| \na( u \cdot \na c )-(u\cdot \na)\na c\|_{L^2} \le C \| \na u
\|_{L^{\infty}} \| \na c \|_{L^2}.
\]
Multiplying $-\Delta c$ to both sides of the equation of $c$ and
integrating, we obtain
\[
\frac12 \frac{d}{dt} \| \nabla  c\|_{L^2}^2 + \|  \Delta c
\|_{L^2}^2  \le C \|\na u \|_{L^{\infty}} \| \na c \|_{L^2}^2 + C\|
( k(c) n ) \|_{L^2}^2 +\frac14 \|  \Delta c \|_{L^2}^2.
\]
%We estimate the second term in the above right hand side by using the Young's inequality.
%\[
%C \| \na c \|_{L^{\infty}} \| \na^2 u \|_{L^2} \| \na^2 c \|_{L^2}
%\le \epsilon \| \na^2 u \|_{L^{2}}^2 + C \| \na c \|_{L^\infty}^2 \|
%\na^2 c \|_{L^2}^2.
%\]
%Since we assume $k\in C^{m-1}$,  we have
%\[
%\| \na ( k(c) n )\|_{L^2}^2 \le C \| \na c \|_{L^{\infty}}^2 \| n
%\|_{L^2}^2 + \frac{C_1}{4} \| \na n \|_{L^2}^2,
%\]
%for some constant $C_1$. Thus we have
%\[
%\frac12 \frac{d}{dt} \| \Delta  c\|_{L^2}^2 + \| \na \Delta c
%\|_{L^2}^2 \le  C \|\na u \|_{L^{\infty}} \| \na^2 c \|_{L^2}^2
%\]
%\[
%+\epsilon \| \na^2 u \|_{L^{2}}^2 + C \| \na c \|_{L^\infty}^2 \|
%\na^2 c \|_{L^2}^2+C \| \na c \|_{L^{\infty}}^2 \| n \|_{L^2}^2 +
%C_1 \| \na n \|_{L^2}^2+\frac14 \| \na \Delta c \|_{L^2}^2.
%\]
For the equations of $u$, multiplying $-\Delta u$ to both sides
of the equations and integrating by parts, we have
\[
\frac12 \frac{d}{dt} \| \na u \|_{L^2}^2 + \|\Delta u \|_{L^2}^2 \le
C \norm{ \na u }_{L^{\infty}}\norm{\na u }_{L^2}^2+C\norm{n}_{L^2}
\norm{\Delta u}_{L^2}.
\]
Collecting all the estimates, we obtain
\[
 \frac{d}{dt}\left( \| n\|_{L^2}^2 + \| \nabla c
\|_{L^2}^2 + \| \na u \|_{L^2}^2  \right)+ \left( \| \na n
\|_{L^2}^2 + \| \Delta c \|_{L^2}^2 + \|\Delta u \|_{L^2}^2 \right)
\]
\[
\le C \left( \|\nabla c\|_{L^{\infty}}^2 +\| \nabla u
\|_{L^{\infty}}
 \right) \left(\|n\|_{L^2}^2+ \| \na c \|_{L^2}^2+\|\na u\|_{L^2}^2\right).
\]
From Gronwall's inequality, we have
\[
 \sup\left(  \| n\|_{L^2}^2 + \| \na c \|_{L^2}^2 + \|
\na u \|_{L^2}^2  \right)+ \int_0^T\left( \| \na n \|_{L^2}^2 + \|
\Delta c \|_{L^2}^2 +\|\Delta u \|_{L^2}^2 \right)dt
\]
\[
\le C( \| n_0\|_{L^2}^2+ \| \na c_0 \|_{L^2}^2 + \| \na u_0
\|_{L^2}^2) \exp \left ( \int_0^T \| \na u \|_{L^{\infty}} + \| \na
c \|_{L^{\infty}}^2 dt \right).
\]
Note that $\| n \|_{L^{\infty}(0, T; L^2)}$ and $\| \nabla n
\|_{L^{2}(0, T; L^2)}$ are uniformly bounded if $\int_0^T \| \nabla
u \|_{L^{\infty}}+ \| \nabla c \|_{L^{\infty}}^2 dt$ is bounded.
Moreover, we see that $n\in L^q_xL^{\infty}_t$ and $\nabla
n^{q/2}\in L^2_xL^2_t$ for all $2<q<\infty$. Indeed,
\[
\frac{d}{dt}\norm{n}^q_{L^q}+\norm{\nabla
n^{\frac{q}{2}}}_{L^2}^2\leq C\int_{\R^2}\abs{n\nabla c\nabla
n^{q-1}}dx\leq C\norm{\nabla
c}^2_{L^{\infty}}\norm{n}^q_{L^q}+\frac{1}{2}\norm{\nabla
n^{\frac{q}{2}}}^2_{L^2}.
\]
From the above inequality, we have $\| n(t) \|_{L^q} \leq C$, where
$C$ is independent of $q$.
Letting $q \rightarrow \infty$,  we have $n \in L^{\infty}_x L^{\infty}_t$.\\
Next, we consider the estimate in the space $(n,c) \in H^1 \times H^2
$. We have
\[
\frac12 \frac{d}{dt} \norm{ \na n }_{L^2}^2 + \norm{\Delta n
}_{L^2}^2 \leq C \norm{\na u }_{L^{\infty}} \norm{\na n }_{L^2}^2 +C
\norm{\na n }_{L^2} \norm{\na c}_{L^{\infty}} \norm{\na^2 n }_{L^2}
\]
\[
+C \norm{ n}_{L^{\infty}} \norm{\Delta c }_{L^2} \norm{ \na^2 n
}_{L^2} +C \norm{n}_{L^{\infty}} \norm{ \na c
}_{L^{\infty}}\norm{\na c}_{L^2} \norm{\na^2 n }_{L^2}.
\]
From Young's inequality and Gronwall's inequality, we have
\[
\sup \norm{\na n}_{L^2}^2 +\int_0^T \norm{\na^2 n }_{L^2}^2 dt\]
\[
\leq \left(\norm{\na n_0 }_{L^2}^2 +C\norm{n}_{L^{\infty}(0,T;
L^{\infty})}\left(\int_0^T \norm{\Delta c}_{L^2}^2 dt +\norm{\na c
}_{L^{\infty}(0,T; L^{2})} \int_0^T \norm{\na c}_{L^{\infty}}^2 dt
\right)\right)
\]
\[
\times \exp \left ( \int_0^T \| \na u \|_{L^{\infty}} + \| \na c
\|_{L^{\infty}}^2 dt \right).
\]
Hence, $n \in H^1_x L^{\infty}_t \cap H^2_x L^2_t$.  For the $H^2$
estimate of $c$, we have
\[
\frac12\frac{d}{dt} \norm{\Delta c}_{L^2}^2 + \norm{\na \Delta
c}_{L^2}^2 \leq C\norm{\na u }_{L^{\infty}}\norm{\Delta c}_{L^2}^2 +
C\norm{\Delta u}_{L^2} \norm{c}_{L^{\infty}} \norm{\na \Delta
c}_{L^2}
\]
\[
+C\norm{\na c}_{L^2} \norm{n}_{L^{\infty}} \norm{\na \Delta c}_{L^2}
+C \norm{\na n}_{L^2} \norm{\na \Delta c}_{L^2}.
\]
By Gronwall's inequality, we have $c \in H^2_xL^{\infty}_t \cap
H^3_xL^2_t$. Similarly,  $u\in H^2_xL^{\infty}_t \cap
H^3_xL^2_t$. Then, we consider the estimate in the space $(n, c, u) \in H^2 \times H^3\times H^3$. Proceeding similarly to the above,  we obtain
\[
\frac12\frac{d}{dt}\|  n \|_{H^{2}}^2 + \| \na n \|_{H^{2}}^2
\leq C\|  u \|_{L^{\infty}} \| n \|_{H^{2}}\| \na n
\|_{H^{2}}+C\| \na u\|_{L^{\infty}} \|  n \|_{H^{1}} \| \na n
\|_{H^{2}}
\]
\[
+\frac14 \| \na n \|_{H^{2}}^2+ C \| \chi (c) n \na c
\|_{H^{2}}^2.
\]
In the above, the last term can be controlled by
\[
\| \chi (c) n \na c
\|_{H^{2}} \leq C \norm{n}_{H^2} \norm{ \chi(c) \na c}_{H^2},
\]
and
\[
\norm{ \na^2( \chi (c) \na c) }_{L^2} \leq C\norm { \na^3 c}_{L^2} + C\norm{ \na^2 c}_{L^2} \norm{\na c}_{L^{\infty}} + C\norm{ \na c}_{L^6}^3.
\]
We already obtained $c \in H^2_xL^{\infty}_t \cap
H^3_xL^2_t$. Hence, if we use Young's inequality and Gronwall's inequality, we have
\[
\sup \norm{ n}_{H^2}^2 +\int_0^T \norm{\na n }_{H^2}^2 dt
%\]
%\[
\leq \norm{ n_0 }_{H^2}^2
 \exp \left (C+ \int_0^T \| \na u \|_{L^{\infty}} + \| \na c
\|_{L^{\infty}}^2 dt \right).
\]
Similarly, we estimate $c$ as
\[
\frac12 \frac{d}{dt} \|  c\|_{H^3}^2 + \| \na c \|_{H^3}^2 \le C
\|\na u \|_{L^{\infty}} \|  c \|_{H^3}\| \na c \|_{H^3} + C  \|  u
\|_{H^3} \| \na c \|_{L^\infty}\norm{\na c}_{H^3}
\]
\[
+ C\| ( k(c) n ) \|_{H^{2}}^2 +\frac14 \|  \na c \|_{H^3}^2.
\]
%From the induction hypothesis, we have $\| \nabla^{m-1}
%k(c)\|_{L^{\infty}} <C(T)$, i.e.,
We can
control the term $\| ( k(c) n ) \|_{H^{2}}^2$ by $C(\|
c\|_{H^2}^{2}\norm{n}_{H^2}^2 + \norm{c}_{H^1}^2 \norm{\na c}_{L^{\infty}}^2 \norm{n}_{H^2}^2)$. For the estimate of $u$, we have
\[
\frac12 \frac{d}{dt} \|  u \|_{H^3}^2 + \|\na u \|_{H^3}^2 \le C\|
\na u \|_{L^{\infty}} \|  u \|_{H^3}\norm{ \na u}_{H^{3}} +
\frac14\| \na u \|_{H^3}^2 + C \| n \|_{H^{2}}^2.
\]
Using Gronwall's inequality, we have $(c,u) \in (H^3_{x}L^{\infty}_{t} \cap H^{4}_{x} L^{2}_{t} ) \times (H^3_{x}L^{\infty}_{t} \cap H^{4}_{x} L^{2}_{t} )$.
%Assume the following hold up tp $m\ge 4$,
%\[ \frac 12\ddt (\norm{n}^2_{H^{m-2}} + \norm{c}^2_{H^{m-1}} + \norm{u}^2_{H^{m-1}})
%+\int_0^T \norm{n}^2_{H^{m-1}} + \norm{c}^2_{H^{m}} +
%\norm{u}^2_{H^{m}} dt \]
%\[\le C(T, \norm{n_0}^2_{H^{m-2}}, \norm{c_0}^2_{H^{m-1}},\norm{u_0}^2_{H^{m-1}}).\]
% and $c \in H^3_xL^{\infty}_t \cap H^4_xL^2_t$. We omit the details. \\
 Let us  consider $H^{m-1}\times H^{m} \times H^{m}$ estimates. The case $m=2,3$ and 4 are proved in the above, hence we consider the $m \geq 5$ case.
 Taking $\pa^{\alpha}$ ($|\alpha|\leq m-1$) and multiplying $\pa^{\alpha}
n$ to both sides of the equation $n$ and integrating and
summing, we have
\[
\frac12\frac{d}{dt}\|  n \|_{H^{m-1}}^2 + \| \na n \|_{H^{m-1}}^2
\leq C\| \na u \|_{L^{\infty}} \| n \|_{H^{m-1}}\| \na n
\|_{H^{m-1}}+C\|  u\|_{H^{m-1}} \| \na n \|_{L^{\infty}} \| \na n
\|_{H^{m-1}}
\]
\[
+\frac14 \| \na n \|_{H^{m-1}}^2+ C \| \chi (c) n \na c
\|_{H^{m-1}}^2.
\]
We already obtained the estimate for the case $m=4$, thus $\|\na c\|_{L^{\infty}(0,T; L^{\infty})}$ is bounded. Hence, we have
\[ \|\na \chi(c)\|_{H^{m-2}} \le C(1+ \|\na c\|_{L^{\infty}}) \|\na \chi'(c)\|_{H^{m-3}}.\]
Using the classical product lemma on each step of iteration, we can
control
\[ \|\na\chi(c)\|_{H^{m-2}} \le C(1+ \|\na c\|_{L^{\infty}})^{m-1}.\]
Then  we have
\[\norm{\chi(c) n \na c}_{H^{m-1}} \leq C (1+\| c\|_{H^{m}} + \|\na c\|_{L^{\infty}}^{m}) \norm{n}_{H^{m-1}}\] using the product lemma.
% Using calculus inequality again, we obtain
% \[
% \norm{ \chi(c) \na c}_{H^{m-1}}\leq C\norm{\na c}_{H^{m-1}} +C \norm{\na c}_{L^{\infty}} \norm{ \na \chi (c)}_{H^{m-2}}.
% \]
% Once again using calculus inequality, we have
% \[
% \norm{\na \chi(c) }_{H^{m-2}}\leq C \norm{\na c}_{H^{m-2}}+C \norm{\na c}_{L^{\infty}} \norm{\na \chi'(c)}_{H^{m-3}}.
% \]
% If we use calculus inequality repeatedly, we can control ....\\
For the $H^{m}$ estimate of $c$, we proceed similarly to have
\[
\frac12 \frac{d}{dt} \|  c\|_{H^m}^2 + \| \na c \|_{H^m}^2 \le C
\|\na u \|_{L^{\infty}} \|  c \|_{H^m}\| \na c \|_{H^m} + C  \|  u
\|_{H^m} \| \na c \|_{L^\infty}\norm{\na c}_{H^m}
\]
\[
+ C\| ( k(c) n ) \|_{H^{m-1}}^2 +\frac14 \|  \na c \|_{H^m}^2.
\]
%From the induction hypothesis, we have $\| \nabla^{m-1}
%k(c)\|_{L^{\infty}} <C(T)$, i.e.,
As is shown for the term $\norm{\chi(c) n \na c}_{H^{m-1}}$, we
control the term $\| ( k(c) n ) \|_{H^{m-1}}^2$ by $C(1+\|\na
c\|_{L^{\infty}}^{2m-2})\norm{n}_{H^{m-1}}^2$. For the estimate of $u$, we have
\[
\frac12 \frac{d}{dt} \|  u \|_{H^m}^2 + \|\na u \|_{H^m}^2 \le C\|
\na u \|_{L^{\infty}} \|  u \|_{H^m}\norm{ \na u}_{H^{m}} +
\frac14\| \na u \|_{H^m}^2 + C \| n \|_{H^{m-1}}^2.
\]
Thus, by collecting all the above estimates and using Gronwall's
inequality, we have $(n, c, u) \in (H^{m-1}_x L^{\infty}_t \cap
H^{m}_xL^2_t)\times (H^{m}_xL^{\infty}_t \cap H^{m+1}_x L^2_t)\times
(H^{m}_xL^{\infty}_t \cap H^{m+1}_x L^2_t) $. This completes the
proof.
\end{proof}

%Theorem $2$ states that the velocity part in the blow-up criterion
%can be refined in both two and three dimensions, as is consistent
%with  the blow-up criterion for the incompressible Navier-Stokes
%equations.

%Especially by using the Beale-Kato-Majda's logarithmic Sobolev type inequality\cite{BKM}, we obtain the blowup criterion in terms of the vorticity. For the two dimensional equations and the chemotaxis equations coupled with the Navier-Stokes equations, we can have the more refined blowup criterion.

%If we use the Beale-Kato-Majda's logarithmic inequality, then we
%have the blowup criterion in terms of
%\[
%\int_0^T \| \omega \|_{L^{\infty}} +\| \na c \|_{L^{\infty}}^2 dt
%\]
%{\it ***********************$L^2 \times H^2 \times H^1$
%estimates**********}\\

We are ready to present the proof of Theorem \ref{Theorem3}.

\begin{pfthm2}
In the proof of Proposition $1$, we notice that
 $\|\na c\|_{L^{\infty}}$ is solely responsible for $n\in L^2_xL^{\infty}_t$ and $\nabla n\in
L^2_xL^2_t$.  Indeed,
\begin{align}\label{nc}
\frac{d}{dt}\norm{n}^2_{L^2}+\norm{\nabla n}_{L^2}^2\leq
C\int_{\R^d}\abs{n\nabla c\nabla n}dx\leq C\norm{\nabla
c}^2_{L^{\infty}}\norm{n}^2_{L^2}+\frac{1}{2}\norm{\nabla
n}^2_{L^2}.
\end{align}
This implies $u\in L^2_xL^{\infty}_t$ and $\nabla u\in L^2_xL^2_t$ by
\begin{align}\label{uc}
\frac{d}{dt}\norm{u}^2_{L^2}+\norm{\nabla u}_{L^2}^2\leq
C\|n\|_{L^2}\|u\|_{L^2}.
\end{align}
Moreover, we have $n\in L^q_xL^{\infty}_t$ and $\nabla n^{q/2}\in
L^2_xL^2_t$ for all $2<q<\infty$;
\[
\frac{d}{dt}\norm{n}^q_{L^q}+\norm{\nabla
n^{\frac{q}{2}}}_{L^2}^2\leq C_q\int_{\R^d}\abs{n\nabla c\nabla
n^{q-1}}dx\leq C_q\norm{\nabla
c}^2_{L^{\infty}}\norm{n}^q_{L^q}+\frac{1}{2}\norm{\nabla
n^{\frac{q}{2}}}^2_{L^2}.
\]
Next, we see that $\nabla c\in L^2_xL^{\infty}_t$ and $\nabla^2
c\in L^2_xL^2_t$. Indeed,
\begin{align}\label{cc}
\frac{d}{dt}\norm{\nabla c}^2_{L^2}+\norm{\nabla^2 c}_{L^2}^2\leq
C\norm{\nabla c}_{L^{\infty}}\norm{u}_{L^2}\norm{\nabla^2
c}_{L^2}+C\norm{n}_{L^2}\norm{\nabla^2 c}_{L^2}.
\end{align}
We first consider the two-dimensional case.\\
\\
$\bullet$ \,\,({\bf{2D case}})\qquad For convenience, we denote
vorticity as $\omega:=\nabla\times u$; that is $\omega =
\partial_1 u_2- \partial_2 u_1$ in two dimensions. Next, we consider the vorticity equation
\[
\omega_t-\Delta\omega+u\nabla \omega=-\nabla^{\perp}n\nabla\phi,
\]
where $\nabla^{\perp}n=(-\partial_2 n, \partial_1 n)$. We note that
$\omega\in L^2_xL^{\infty}_t$ and $\nabla\omega\in L^2_xL^2_t$,
since
\begin{equation}\label{est-omega-10}
\frac{d}{dt}\norm{\omega}^2_{L^2}+\norm{\nabla \omega}_{L^2}^2\leq
C\norm{\nabla n}_{L^2}\norm{\omega}_{L^2}.
\end{equation}
Furthermore, we observe that $\nabla\omega\in L^2_xL^{\infty}_t$
and $\nabla^2\omega\in L^2_xL^2_t$. Indeed, testing $-\Delta
\omega$, we get
\[
\frac{d}{dt}\norm{\nabla\omega}^2_{L^2}+\norm{\nabla^2
\omega}_{L^2}^2\leq
\norm{u}_{L^4}\norm{\nabla\omega}_{L^4}\norm{\Delta\omega}_{L^2}+\norm{\nabla
n}_{L^2}\norm{\Delta\omega}_{L^2}
\]
\[
\leq C\norm{u}^{\frac{1}{2}}_{L^2}\norm{\nabla
u}^{\frac{1}{2}}_{L^2}\norm{\nabla\omega}^{\frac{1}{2}}_{L^2}
\norm{\nabla^2\omega}^{\frac{3}{2}}_{L^2}+\norm{\nabla
n}_{L^2}\norm{\Delta\omega}_{L^2}.
\]
Therefore, via embedding, we have
\[
\int_0^T\norm{\nabla u}_{L^{\infty}}dt\leq \int_0^T\norm{\nabla
u}_{H^{2}}dt\leq C\int_0^T\norm{\omega}_{H^{2}}dt<\infty.
\]
This completes the proof of the 2D case.\\
\\
$\bullet$\,\,({\bf{3D case}})\quad We will show this case by
contradictory arguments. We suppose that the condition
\eqref{3d-nse-reg} is not true. We first recall the vorticity equation
\[
\omega_t-\Delta\omega+u\nabla \omega=\omega\nabla
u-\nabla^{\perp}n\nabla\phi.
\]
Under the condition \eqref{3d-nse-reg} we have $\omega\in
L^2_xL^{\infty}_t$ and $\nabla\omega\in L^2_xL^2_t$ as follows. We
denote $Q^*=\R^3\times (T^*-\delta,t)$ for $T^* -\delta < t< T^*$.
For any given $p,q$ satisfying $ 3/p+ 2/q=1$, $3< p\le \infty$, we
choose $l,m$ such that $1/p+1/l=1/2$ and $1/q+1/m=1/2$. We then
remind that, due to the Gargliardo-Nirenberg's inequality,
\[
\norm{u}_{L^{l,m}_{x,t}}\leq
C\norm{u}^{\theta}_{L^{2,\infty}_{x,t}}\norm{\nabla
u}^{1-\theta}_{L^{2,2}_{x,t}},\qquad 2\leq l\leq
6,\,\,\,\frac{3}{l}+\frac{2}{m}=\frac{3}{2},
\]
where $\theta=(6-l)/2l$ and $1-\theta=(3l-6)/2l$. Then we have
\[
\frac{d}{dt}\norm{\omega}^2_{L^2}+\norm{\nabla \omega}_{L^2}^2\leq
\norm{u}_{L^p}\norm{\omega}_{L^l}\norm{\nabla\omega}_{L^2}
+C\norm{n}_{L^2}\norm{\nabla\omega}_{L^2}.
\]
Next, integrating in time over $(T^*-\delta, t)$,
\begin{align*}
\norm{\omega(t)}^2_{L^2}+ \int_{T^*-\delta}^t\norm{\nabla
\omega}_{L^2}^2 & \leq \norm{\omega(T^*-\delta)}^2_{L^2}+
C\norm{u}_{L^{p,q}_{x,t}}\norm{\omega}_{L^{l,m}_{x,t}}\norm{\nabla\omega}_{L^{2,2}_{x,t}}
+C\norm{n}_{L^{2,2}_{x,t}}\norm{\nabla\omega}_{L^{2,2}_{x,t}}
\\&
\leq \norm{\omega(T^*-\delta)}^2_{L^2}+
\norm{u}_{L^{p,q}_{x,t}}\norm{\omega}^{\theta}_{L^{2,\infty}_{x,t}}\norm{\nabla
\omega}^{2-\theta}_{L^{2,2}_{x,t}}+C\norm{n}_{L^{2,2}_{x,t}}\norm{\nabla\omega}_{L^{2,2}_{x,t}}.
\end{align*}
Note that $\theta>0$. By Young's inequality, we have
\begin{align*}
\norm{\omega(t)}^2_{L^2}+& \int_{T^*-\delta}^t\norm{\nabla
\omega}_{L^2}^2 \leq \norm{\omega(T^*-\delta)}^2_{L^2}+
C\norm{u}^{\frac{2}{\theta}}_{L^{p,q}_{x,t}}\norm{\omega}^{2}_{L^{2,\infty}_{x,t}}
+C\norm{n}^2_{L^{2,2}_{x,t}}+\frac{1}{2}\norm{\nabla\omega}^2_{L^{2,2}_{x,t}}.
\end{align*}
Since $\norm{u}_{L^{p,q}_{x,t}}$ can be sufficiently small in
$(T^*-\delta, T^*) \times \bbr^3$ by decreasing $\delta$, we have
\begin{align}\label{del}
\norm{\omega(t)}^2_{L^2}+\frac{1}{2}\int_{T^*-\delta}^t\norm{\nabla
\omega}_{L^2}^2\leq
\norm{\omega(T^*-\delta)}^2_{L^2}+C\norm{n}^2_{L^{2,2}_{x,t}},
\end{align}
which is bounded by \eqref{nc}. Since $t$ is arbitrary for all
$t<T^*$, this estimate is uniform.

Next, we observe that $\nabla^2 c\in L^2_xL^{\infty}_t$ and $\nabla^3
c\in L^2_xL^2_t$. Indeed, we estimate
\[
\frac{d}{dt}\norm{\nabla^2 c}^2_{L^2}+\norm{\nabla^3 c}_{L^2}^2\leq
C(\norm{\nabla n}_{L^2}+\norm{\nabla c}_{L^{\infty}}\norm{\nabla
u}_{L^2})\norm{\nabla^3 c}_{L^2}+\norm{u}_{L^6}\norm{\nabla^2
c}_{L^3}\norm{\nabla^3 c}_{L^2}
\]
\[
\leq C(\norm{\nabla n}_{L^2}+\norm{\nabla
c}_{L^{\infty}}\norm{\nabla u}_{L^2})\norm{\nabla^3
c}_{L^2}+\norm{\omega}_{L^2}\norm{\nabla^2
c}^{\frac{1}{2}}_{L^2}\norm{\nabla^3 c}^{\frac{3}{2}}_{L^2},
\]
and use \eqref{nc}, \eqref{uc}. Similarly, we show that $n\in
L^{\infty}_tH^1_x\cap L^2_tH^2_x$ by estimating
\[
\frac{d}{dt}\norm{\nabla n}^2_{L^2}+\norm{\nabla^2 n}_{L^2}^2\leq
\norm{u}_{L^6}\norm{\nabla n}_{L^3}\norm{\nabla
n}_{L^2}+\norm{\nabla c}_{L^{\infty}}\norm{\nabla
n}_{L^2}\norm{\Delta n}_{L^2}
\]
\[
+\norm{u}_{L^6}\norm{\nabla^2 c}_{L^3}\norm{\nabla^2
n}_{L^2}+\norm{\nabla c}_{L^{\infty}}\norm{n}_{L^6}\norm{\nabla
c}_{L^3}\norm{\Delta n}_{L^2}.
\]
Finally, we show that $\omega\in H^1_xL^{\infty}_t\cap H^2_xL^2_t$.
Testing $-\Delta \omega$ to the equations, we have
\[
\frac{d}{dt}\norm{\nabla\omega}^2_{L^2}+\norm{\nabla^2
\omega}_{L^2}^2\leq
\norm{u}_{L^p}\norm{\nabla\omega}_{L^l}\norm{\nabla^2\omega}_{L^2}
\]
\[
+\norm{\nabla
u}_{L^4}\norm{\omega}_{L^4}\norm{\nabla^2\omega}_{L^2}+C\norm{\nabla
n}_{L^2}\norm{\nabla^2\omega}_{L^2},
\]
where $3<p\le \infty$ and $1/p + 1/l = 1/2$. Note that, via the
Gargliardo-Nirenberg's inequality,
\[
\norm{\nabla
u}_{L^4}\norm{\omega}_{L^4}\norm{\nabla^2\omega}_{L^2}\leq
C\norm{\omega}^2_{L^4}\norm{\nabla^2\omega}_{L^2}\leq
C\norm{\omega}^{\frac{5}{4}}_{L^2}\norm{\nabla^2\omega}^{\frac{7}{4}}_{L^2}.
\]
We treat the term
$\norm{u}_{L^p}\norm{\nabla\omega}_{L^l}\norm{\nabla^2\omega}_{L^2}$
similarly to
$\norm{u}_{L^p}\norm{\omega}_{L^l}\norm{\nabla\omega}_{L^2}$ in the
estimation of \eqref{del}. Therefore, since $\nabla^2\omega\in
L^2_xL^2_t$, we have
\[
\int_0^T\norm{\nabla u}_{L^{\infty}}dt\leq \int_0^T\norm{\nabla
u}_{H^{2}}dt\leq C\int_0^T\norm{\omega}_{H^{2}}dt<\infty.
\]
This completes the proof.
\end{pfthm2}

%%%%%%%%%%%%%%%%%%%%%%%%%%%%%%%%%%%%%%%%%%%%%%%%%%%%%%%%%%%%%%%%%%%%%%%%%%%%%%%
%%%%%%%%%%%%%%%%%%%%%%%%%%%%%%%%%%%%%%%%%%%%%%%%%%%%%%%%%%%%%%%%%%%%%%%%%%%%%%%
%%%%%%%%%%%%%%%%%%%%%%%%%%%%%%%%%%%%%%%%%%%%%%%%%%%%%%%%%%%%%%%%%%%%%%%%%%%%%%%
\section{Global solutions in two dimensions}\label{section4}
In this section, we provide the proof of global existence of smooth
solutions in time with large initial data in two dimensions. For the
proof of Theorem \ref{Theorem4}, we show some a priori estimates,
which are uniform until the maximal time of existence. Moreover,
such estimates imply that the blow-up condition quantity in Theorem
\ref{Theorem3} is uniformly bounded up to the maximal time of
existence. Therefore, the maximal time cannot be finite. Now we
present the proof of Theorem~\ref{Theorem4}.

\begin{pfthm3} We first present the following estimates for the solutions
to the two-dimensional chemotaxis system coupled with the
Navier-Stokes equations.
\begin{equation}\label{n-eq-est}
n(1+|x| +|\ln n|) \in L^{\infty}(0, \, T; L^{1} (\R^2)),\quad \nabla
\sqrt{n} \in L^{2}(0, \, T; L^{2} (\R^2)),
\end{equation}
\begin{equation}\label{c-eq-est}
c \in L^{\infty} (0,\, T; L^{1}(\R^2)\cap L^{\infty}(\R^2) \cap
H^{1} (\R^2)),\quad \nabla c \in L^{2} (0, \, T; L^{2}(\R^2)),
\end{equation}
%\[
%\sqrt{n} |\nabla c| \in L^{2}(0,\,T; L^{2}(\R^{2})),
%\]
\begin{equation}\label{u-eq-est}
u \in L^{\infty}(0,\, T; L^{2}(\R^2)) ,\quad \nabla u \in L^{2}(0,\,
T; L^{2}(\R^{2})).
\end{equation}
We have the mass conservation for $n(t,x)$ as\\
\begin{equation}\label{mass-conservation-n}
\int_{\R^{2}} n(t,\,x) dx=\int_{\R^2} n_0(x) dx.
\end{equation}
Multiplying $c^{q-1}(t,x)$ to both sides of the second equation
of \eqref{KSNS} and integrating over $\R^2$, we have
\begin{equation}\label{energy-estimate-c-1}
\frac{1}{q} \frac{d}{dt} \| c\|_{L^q}^q +\frac{4(q-1)}{q^2}\| \nabla
c^{\frac{q}{2}} \|_{L^2}^2 +\int_{\R^2} k(c)n c^{q-1} dx=0.
\end{equation}
Hence, we have $c \in L^{\infty}(0,\, T; L^q)$ for any $1<q \le
\infty$ and $ \nabla c^{\frac{q}{2}} \in L^2(0,\, T; L^2)$ for any
$1<q < \infty$. Multiplying $\ln n$ to both sides of the first
equation of \eqref{KSNS} and integrating over $\R^{2}$, we have
\begin{equation}\label{estimates-nlnn}
\frac{d}{dt}\int_{\R^2} n \ln n dx + 4 \int_{\R^2} |\nabla \sqrt{n}
|^2 dx+\int_{\R^{2}} \chi'(c) |\nabla c|^2 n dx = -\int_{\R^2} \chi
(c) \Delta c n dx.
\end{equation}
Multiplying $-\Delta c$ to both sides of \eqref{KSNS} and
integrating over $\R^2$, we obtain
\[
\frac{d}{dt} \| \nabla c \|_{L^2}^2+ \| \Delta c \|_{L^2}^2 =
\int_{\R^2} k(c) \Delta c \,\, n dx+\sum_{j,k}\int_{\R^2}
c\partial_k u_i \partial_i \partial_k c dx
\]
\bq \label{H1-estimate-c} \le  \int_{\R^2} k(c) \Delta c \,\, n dx+
C_1 \| \nabla u \|_{L^2} \| c\|_{L^{\infty}}\| \Delta c\|_{L^2}.
  \eq
Multiplying $\mu$ to both sides of \eqref{H1-estimate-c} and
then adding \eqref{estimates-nlnn}, we have
\[
\frac{d}{dt} \int_{\R^2} n \ln n + \mu |\nabla c |^2 dx+
\int_{\R^2}4 | \nabla \sqrt{n} |^2 +\mu  |\Delta c|^2dx
\]
\[ \le \epsilon \| \Delta c\|_{L^2} \| \sqrt{n} \|_{L^4}^2 +C_2
\|c\|_{L^{\infty}}^2 \| \nabla u \|_{L^2}^2 +\frac14\mu \| \Delta
c\|_{L^2}^2
\]
\begin{equation} \label{estimate-nlnn-c}
\le \frac12\mu\| \Delta c\|_{L^2}^2+\epsilon C_3 \| \nabla
\sqrt{n}\|_{L^2}^2  +C_2 \|c\|_{L^{\infty}}^2 \| \nabla u
\|_{L^2}^2,
\end{equation}
where we used the condition (A). Here we choose $\epsilon$ to be so
small that $\epsilon C_3 <2$ and, for convenience, set
$\frac{\lambda_1}{2}:=C_2 \| c_0 \|_{L^{\infty}}^2$. On the other
hand, multiplying $u$ to both sides of the third equations of
\eqref{KSNS} and integrating over $\R^2$, we have
\begin{equation}\label{L2-estimate-of-u}
\frac12\frac{d}{dt} \| u\|_{L^2}^2 +\|
\nabla u \|_{L^2}^2 =-\int_{\R^2} n \nabla \phi u dx.
\end{equation}
Multiplying $\phi$ to both sides of the first equation of
\eqref{KSNS} and integrating over $\R^2$, we have
\[
\frac{d}{dt} \int_{\R^2} n \phi dx= - \int_{\R^2} u \cdot \nabla n
\phi dx - \int_{\R^2} \nabla n \cdot \nabla \phi dx + \int_{\R^2}
\chi (c) n \nabla c \cdot \nabla \phi dx
\]
\begin{equation}\label{estimate-of-nphi}
\le -\int_{\R^2} u \cdot \nabla n \phi dx+C_4 \| \nabla
\sqrt{n}\|_{L^2} \|\sqrt{n}\|_{L^2}+C_5 \| n \|_{L^2} \| \nabla
c\|_{L^2}.
\end{equation}
Summing \eqref{L2-estimate-of-u} and \eqref{estimate-of-nphi}, we have
\[
\frac{d}{dt}\left(\frac12 \| u\|_{L^2}^2+\int_{\R^2} n \phi
dx\right)+ \|\nabla u \|_{L^2}^2
\]
\begin{equation}\label{estimate-u-nphi}
\le C_5\| \sqrt{n}\|_{L^2} \|\nabla \sqrt{n}\|_{L^2}+ C_6 \|
\sqrt{n} \|_{L^2} \| \nabla \sqrt{n} \|_{L^2} \| \nabla c \|_{L^2}.
\end{equation}
Multiplying $\lambda_1$ to both sides of \eqref{estimate-u-nphi}
and adding \eqref{estimate-nlnn-c}, we obtain
\[
\frac{d}{dt}\int_{\R^2} n \ln n + \mu |\nabla c
|^2+\frac{\lambda_1}{2} |u|^2+\lambda_1 n \phi dx+\int_{\R^2} 2
|\nabla \sqrt{n}|^2+\frac{\mu}{2} |\Delta c|^2 +\frac{\lambda_1}{2}
|\nabla u |^2 dx
\]
\[
\le \lambda_1 C_5 \| n_0 \|_{L^1}^\frac12 \| \nabla \sqrt{n}
\|_{L^2} + \lambda_1 C_6 \| n_0 \|_{L^1}^\frac12 \| \nabla \sqrt{n}
\|_{L^2} \| \nabla c\|_{L^2}
\]
\begin{equation}\label{estimate-weak-solution}
\le \frac{\lambda_1^2 C_5^2}{2} \| n_0 \|_{L^1} +\frac{\lambda_1^2
C_6^2}{2} \| n_0 \|_{L^1} \| \nabla c\|_{L^2}^2+\| \nabla \sqrt{n}
\|_{L^2}^2.
\end{equation}
Using  Gronwall's inequality, we have
\[
\sup_{ 0 \leq t \leq T} \left(\int_{\R^2} n \ln n + \mu |\nabla c
|^2+\frac{\lambda_1}{2} |u|^2+\lambda_1 n \phi dx\right)
\]
\[
+\int_0^T\int_{\R^2} |\nabla \sqrt{n}|^2+\frac{\mu}{2} |\Delta c|^2
+\frac{\lambda_1}{2} |\nabla u |^2 dxdt    \leq C(T).
\]
Next, we show that $n \abs{\ln n}\in L^{\infty}(0,\, T; L^2(\R^2))$,
following a typical argument for dealing with kinetic entropy (see
e.g. \cite{DiLi}).
 We first note that
\begin{equation}\label{log-estimate}
\int_{\R^2} n(\ln n)_-\leq C+C\int_{\R^2} n\wx,
\end{equation}
where $(\ln n)_-$ is a negative part of $\ln x$ and
$\wx=(1+\abs{x}^2)^{\frac{1}{2}}$. Indeed, setting $D_1=\{x:
n(x)\leq e^{-\abs{x}}\}$ and $D_2=\{x: e^{-\abs{x}}<n(x)\leq 1\}$,
we have
\begin{align}\label{negative}
\int_{\R^2} n(\ln n)_-=-\int_{D_1} n\ln n-\int_{D_2} n\ln n\leq
C\int_{D_1}\sqrt{n}+\int_{D_2} n\wx\leq C\int_{\R^2}
e^{-\frac{\abs{x}}{2}}+\int_{\R^2}n\wx.
\end{align}
This deduces the estimate \eqref{log-estimate}. Next, integrating
\eqref{estimate-weak-solution} in time $t$, we get
\[
\int_{\R^2} n(\cdot, t) \ln n (\cdot, t) + \mu \norm{\nabla c(t)
}^2_{L^2}+\frac{\lambda_1}{2} \norm{u(t)}^2_{L^2}+\lambda_1
\norm{n(t) \phi}_{L^1}
\]
%\[
%\le \lambda_1 C_5 \| n_0 \|_{L^1}^\frac12 \| \nabla \sqrt{n}
%\|_{L^2} + \lambda_1 C_6 \| n_0 \|_{L^1}^\frac12 \| \nabla \sqrt{n}
%\|_{L^2} \| \nabla c\|_{L^2}
%\]
\begin{equation}\label{log-est-100}
+\int_0^t\int_{\R^2} 2 \bke{|\nabla \sqrt{n}|^2+\frac{\mu}{2}
|\Delta c|^2 +\frac{\lambda_1}{2} |\nabla u |^2} dxd\tau\le
C_7+C_8t+C_9 \int_0^t\| \nabla c\|_{L^2}^2dxd\tau,
\end{equation}
where $C_7=\int_{\R^2} n_0 \ln n_0 + \mu \norm{\nabla c_0
}^2_{L^2}+\frac{\lambda_1}{2} \norm{u_0}^2_{L^2}+\lambda_1 \norm{n_0
\phi}_{L^1}$. Remembering \eqref{log-estimate}, we compute
\begin{equation}\label{xn}
\ddt\int_{\R^2} \wx n dx = \int_{\R^2} nu \na \wx dx + \int_{\R^2} n
\Del \wx dx + \int_{\R^2} \chi(c) n \na c \na \wx dx.
\end{equation}
The term   $\int_{\R^2} nu \na \wx dx $ is bounded as follows:
\[
\abs{\int_{\R^2} nu \na \wx dx }\leq \|\sqrt n
\|^2_{L^4}\|u\|_{L^2}\le \frac 12 \|\na \sqrt n\|_{L^2}^2 +
C\|n_0\|_{L^1}\|u\|_{L^2}^2.
\]
Noting that $\abs{\na \wx}+\abs{\Del \wx}\leq C$, we get
\[
\abs{\int_{\R^2} n \Del \wx dx} + \abs{\int_{\R^2} \chi(c) n \na c
\na \wx dx}%\leq C+C\norm{n}_{L^2}\norm{\nabla c}_{L^2}
\leq C+C\norm{\nabla \sqrt{n}}_{L^2}\norm{\nabla c}_{L^2},
\]
where we used that $\norm{n}_{L^2}\leq
C\norm{n_0}^{\frac{1}{2}}_{L^1}\norm{\nabla \sqrt{n}}_{L^2}$.
In summary, we obtain
\begin{align}\label{equ33}
\ddt \int_{\R^2} \wx n dx \le \delta \|\na \sqrt n \|_{L^2}^2 %+ \frac 12 \|\na u\|^2_{L^2}
+ C\|u\|_{L^2}^2 + C_{\delta}\norm{\nabla c}^2_{L^2} +C,
\end{align}
where $\delta$ is sufficiently small, which will be specified later.
Therefore, integrating \eqref{equ33} in time,
\begin{equation}\label{log-est-200}
\int_{\R^2} \wx n(\cdot,t) dx \le \int_{\R^2} \wx n_0 dx+\delta \int_0^t\|\na
\sqrt n \|_{L^2}^2 + C\int_0^t\|u\|_{L^2}^2 +
C_{\delta}\int_0^t\norm{\nabla c}^2_{L^2} +Ct.
\end{equation}
Now adding $2\int n(\ln n)_-$ to both sides of \eqref{log-est-100},
we obtain
\[
\int_{\R^2} n(\cdot, t) \abs{\ln n (\cdot, t)} + \mu \norm{\nabla
c(t) }^2_{L^2}+\frac{\lambda_1}{2} \norm{u(t)}^2_{L^2}+\lambda_1
\norm{n(t) \phi}_{L^1}
\]
\[
+\int_0^t\int_{\R^2} \bke{|\nabla \sqrt{n}|^2+\mu |\Delta c|^2
+\lambda_1 |\nabla u |^2} dxd\tau
\]
\begin{equation}\label{log-est-300}
\le C+Ct+C\int_0^t\| \nabla c\|_{L^2}^2dxd\tau+C\int_0^t\|
u\|_{L^2}^2dxd\tau,
\end{equation}
where $\delta$ in \eqref{equ33} is so small that term
$\int_0^t\|\na \sqrt n \|_{L^2}^2$ is absorbed to the left hand side of \eqref{log-est-100}. Since
\eqref{log-est-300} holds for all $t$ until the maximal time of
existence, due to Gronwall's inequality, we obtain $n \abs{\ln n}\in
L^{\infty}(0,\, T; L^2(\R^2))$. Moreover, again via the inequality
\eqref{log-est-300}, we deduce \eqref{n-eq-est}-\eqref{u-eq-est}.

We note that from the blow-up criterion in two dimensions in Theorem
\ref{Theorem3}, it suffices to show that $\nabla c \in L^2(0,\, T;
L^{\infty}(\R^2))$ for global existence of smooth solutions in
$\R^2$. We first consider the vorticity equation of velocity fields.
Taking curl, we have
\[
\pa_t \omega + (u \cdot \na ) \omega -\Delta \omega = -\na^{\perp} n
\cdot \na \phi,
\]
where $\na^{\perp}=(-\partial_2, \partial_1)$. If we multiply
$\omega$ to both sides of the above equation and integrate over
$\R^2$, then we have
\[
\frac12 \frac{d}{dt} \| \omega \|_{L^2}^2 + \| \na \omega \|_{L^2}^2
=\int_{\R^2} n \na \phi \na^{\perp} \omega  dx \le C \| n \|_{L^2}
\| \na^{\perp} \omega \|_{L^2}.
\]
Hence, we have
\[
\| \omega \|_{L^{\infty}(0,T; L^2)}^2 +\| \na \omega \|_{L^2(0, T;
L^2)}^2 \le C \| n \|_{L^2(0,T; L^2)}^2.
\]
Since $\| n \|_{L^2} \le \| \sqrt{n} \|_{L^4}^2 \le C \| \sqrt{n}
\|_{L^2} \| \na \sqrt{n} \|_{L^2}$, we have $\omega \in
L^{\infty}(0, T; L^2) \cap L^2(0, T; H^1)$. Next we consider the
equation of $n$. Multiplying $n$ and integrating over $\R^2$, we
have
\[
\frac12 \frac{d}{dt} \| n \|_{L^2}^2+ \| \na n \|_{L^2}^2 =
\int_{\R^2} \chi(c) n \na c \na n dx
\]
\[
=-\frac12 \int_{\R^2} \nabla \cdot ( \chi(c) \nabla c)n^2 dx \leq C \int_{\R^2} |\nabla^2 c| n^2 dx +C \int_{\R^2} |\nabla c|^2 n^2 dx
\]
\[
\leq C (\norm{\na^2 c}_{L^2}+\norm{\na c}^2_{L^4})\norm{n}^2_{L^4}\leq C
\norm{\na^2 c}_{L^2}\norm{n}_{L^2}\norm{\na n}_{L^2},
\]
where we used that  $\chi$ is $C^1$ and $c \in L^{\infty}(0,\,
\infty; L^{\infty})$, i.e., $\chi(c)$ and $\chi'(c)$ are bounded.
%Note that $ \| n \|_{L^{2+\alpha}} \le C\| n
%\|_{L^2}^{\frac{\alpha}{2+\alpha}} \| \na n
%\|_{L^2}^{\frac{2}{2+\alpha}}$ and $\| \na c
%\|_{L^{\frac{2(2+\alpha)}{\alpha}}} \le C \| \na c
%\|_{L^2}^{\frac{2}{2+\alpha}} \| \Delta c
%\|_{L^2}^{\frac{\alpha}{2+\alpha}}.$ Thus we have
%\[
%\| n \|_{L^{2+\alpha}} \| \na n \|_{L^2} \| \na c
%\|_{L^{\frac{2(2+\alpha)}{\alpha}}} \le C \| n \|_{L^2}^2 \| \na
%c \|_{L^2}^{\frac{4}{\alpha}}\| \Delta c\|_{L^2}^2+\frac14 \| \na
%n \|_{L^2}^2.
%\]
Due to Young's inequality, we have
\[
\frac{d}{dt} \| n \|_{L^2}^2+ \| \na n \|_{L^2}^2 \le C\| n
\|_{L^2}^2 \| \na^2 c\|_{L^2}^2.
\]
Therefore, via Gronwall's inequality, we have $n \in L^{\infty}(0,\,
T; L^2) \cap L^2(0, \, T; H^1)$. Multiplying $\Delta^2 c$ to
both sides of the equation of $c$ and integrating over $\R^2$, we
have
\[\frac12 \frac{d}{dt} \| \Delta
c \|_{L^2}^2 + \| \na \Delta c \|_{L^2}^2 \le \| \nabla u\|_{L^4} \|
\nabla c\|_{L^4} \| \nabla \Delta c \|_{L^2} +\|u \|_{L^{\infty}} \|
\nabla^2 c \|_{L^2} \| \nabla \Delta c \|_{L^2}- \int_{\R^2} k(c) n
\Delta^2 c dx.
\]
We note that the last term above is controlled as follows:
\[
\left|\int_{\R^2} k(c) n \Delta^2 cdx \right| \le \left|\int_{\R^2}
k'(c) \na c \cdot (\na \Delta c)n dx\right| + \left|\int_{\R^2} k(c)
\na n \cdot(\na \Delta c) dx\right|
\]
\[
\le C \| \na \Delta c \|_{L^2} \| n \|_{L^4} \| \na c
\|_{L^4}+C\|\nabla n \|_{L^2}\|\na \Delta c\|_{L^2}
\]

\[
\le \epsilon \| \na \Delta c \|_{L^2}^2 +C \| n \|_{L^4}^2 \| \na c
\|_{L^4}^2+C\| \nabla n \|_{L^2}^2.
\]
Hence, we have
\[
\frac12 \frac{d}{dt} \| \Delta c \|_{L^2}^2 + \| \na \Delta c
\|_{L^2}^2 \le C \| \na u \|_{L^4}^2 \| \na c
\|_{L^4}^2+C\|u\|_{L^{\infty}}^2\|\Delta c\|_{L^2}^2+ C \| n
\|_{L^4}^2 \| \na c \|_{L^4}^2+C\| \nabla n \|_{L^2}^2.
\]
\[
\leq C\norm{\omega}_{L^2}\norm{\nabla \omega}_{L^2}\norm{\nabla
c}_{L^2}\norm{\nabla^2 c}_{L^2}+C\norm{\nabla
\omega}^2_{L^2}\norm{\Delta c}^2_{L^2}
\]
\[
+C\norm{n}_{L^2}\norm{\nabla n}_{L^2}\norm{\nabla
c}_{L^2}\norm{\nabla^2 c}_{L^2}+C\norm{\nabla n}_{L^2}^2.
\]
Gronwall's inequality gives  $c \in L^{\infty}(0,\, T; H^2) \cap
L^2(0,\, T; H^3)$, which implies via embedding that $\nabla c\in
L^2(0,\, T; L^{\infty})$. This completes the proof.
\end{pfthm3}

\section{Global weak solution in three dimensions}
In this section we will show the global existence of the weak
solutions for \eqref{KSNS} in three dimensions. We start with
notations. $H_0^1(\bbr^3)$ is used to indicate the closure of
compactly supported smooth functions in $H^1(\bbr^3)$ and
$H^{-1}(\bbr^3)$ means the dual space of $H_0^1(\bbr^3)$. We also
introduce the function spaces $ \calV(\bbr^3), \calVs(\bbr^3),
\calH$ defined as follows:
\[
\calV (\bbr^3)= \{ u= (u_1, u_2, u_3)\,|\, u_i \in
H_0^1(\bbr^3)\},\qquad \calVs(\bbr^3)= \{  u \in \calV(\bbr^3)\, |
\,{\rm{div}}\, u = 0\},
\]
\[
\calH =  \mbox{ the closure of } \calVs(\bbr^3) \mbox{ in }
(L^2(\bbr^3))^3.
\]
The dual space of $\calV(\bbr^3)$ is denoted by $\calV'(\bbr^3)=
\{u=(u_1, u_2, u_3)\,\,|\,\, u_i \in H^{-1}(\bbr^3) \}$. The
duality $\langle w, v\rangle$ for $w \in \calV'(\bbr^3), v\in
\calV(\bbr^3)$  is, as usual, given as $\langle w,v \rangle =
\sum_{i=1}^3 \langle w_i,\, v_i \rangle _{H^{-1} \times H^1_0}$ and
we denote $\calV_{\sigma}^\circ(\bbr^3) = \{w\in \calV'(\bbr^3)\,|
\,\langle w, v \rangle = 0 \mbox{ for all } v\in
\calV_{\sigma}(\bbr^3)\}$.

Next, we define the notion of a weak solution for the system
\eqref{KSNS}.
\begin{defn}\label{defweak}
Let $0<T\leq \infty$. A triple $(n,\, c,\, u)$ is called a weak
solution to the Cauchy problem \eqref{KSNS} in $\R^3\times [0,T)$ if
the following conditions are satisfied:
\begin{itemize}
\item[(a)] The functions $n$ and $c$ are non-negative and $(n,\, c,\, u)$
satisfy
\[
n(1+|x| +|\ln n|) \in L^{\infty}(0, \, T; L^{1} (\R^3)),\quad \nabla
\sqrt{n} \in L^{2}(0, \, T; L^{2} (\R^3)),
\]
\[
c \in L^{\infty} (0,\, T; L^{1}(\R^3)\cap L^{\infty}(\R^3) \cap
H^{1} (\R^3)),\quad \nabla c \in L^{2} (0, \, T; L^{2}(\R^3)),
\]
\[
u \in L^{\infty}(0,\, T; L^{2}(\R^3)) ,\quad \nabla u \in L^{2}(0,\,
T; L^{2}(\R^{3})).
\]
\item[(b)] The functions $n, c$, and $u$ solve the
chemotaxis-fluid equations \eqref{KSNS} in the sense of
distributions, namely for any $\Psi \in  C^1([0,T];
(C_c^{\infty}(\bbr^3))^3) $ with $\na\cdot \Psi =0$
\[
\int_{\R^3}(u\cdot\Psi)(\cdot, T) +\int_{0}^{T} \int_{\R^{3}} u
\cdot (\partial_t \Psi + \Delta \Psi)+\int_{0}^{T} \int_{\R^{3}}u
\otimes u : \na \Psi
\]
\[
- \int_0^{\infty} \int_{\R^3} n \nabla \phi \cdot \Psi + \int_{\R^3}
u_0 \cdot \Psi(0, x) =0,
\]
where $u \otimes u:\na\Psi = \sum_{j,k=1}^3 u^j u^k \pa_j \Psi^k$
and
\[
\int_0^{\infty} \int_{\bbr^3 }n(\pa_t \varphi +\Delta \varphi)
+\int_0^{\infty} \int_{\R^3} nu \cdot \na \varphi +\int_0^{\infty}
\int_{\R^3} \chi (c) n \na c \cdot \na \varphi
 +\int_{\R^3} n_0 (x) \varphi (0, x) =0,
\]
\[
\int_0^{\infty} \int_{\bbr^3 }c(\pa_t \varphi +\Delta \varphi)
+\int_0^{\infty} \int_{\R^3} cu \cdot \na \varphi -\int_0^{\infty}
\int_{\R^3} k (c) n \varphi +\int_{\R^3} c_0 (x) \varphi (0, x) =0
\]
for any $\varphi \in  C^1([0,T]; (C_c^{\infty}(\bbr^3)))$ with
$\varphi(\cdot, T)=0$.
\item[(c)] The functions $n$, $c$ and $u$ satisfy the following energy
inequality:
\[\intd (\frac{|u|^2}{2} + n\phi + n |\ln n|  + \frac{|\na c|^2}{2} + \wx n) dx+
\int_0^T  \| \na u\|_{L^2}^2 +  \|\na \sqrt n \|_{L^2}^2 + \|\Del
c\|_{L^2}^2 dt  \le C, \]
with  $C=C(T, \|\chi(c)\|_{L^{\infty}}, \|\langle x \rangle
n_0\|_{L^1},  \| \na c_0\|_{L^2},
 \|{n_0}|\ln {n_0}|\|_{L^1},  \|\Del \phi\|_{L^{\infty}}, \|\na \phi\|_{L^{\infty}},
 \|\phi\|_{L^{\infty}})$.
\end{itemize}

\end{defn}
Now we compute a priori estimate of an energy inequality under the
Assumption {\bf{(AA)}} and~{\bf{(B)}}.
%What it follows, we  assume that the coefficient functions $\chi(c)$, $k(c)$ satisfy the condition
%\[ \chi(c) - \mu k(c)=0,\] and (B).
%We remark that formal computations yield the following inequalities
%for smooth solutions with sufficient integrability. \\
%\indent
We note first, by maximum principle, that
\[
n(t,x) \ge 0, \quad c(t,x) \ge 0, \quad \|c(t)\|_{L^p} \le
\|c_0\|_{L^p} \quad\mbox{ for }\,\, t \ge 0, \,\, 1\le p \le \infty.
\]
It is straightforward that $\|n(t)\|_{L^1} = \|n_0\|_{L^1}$ for
$t\ge 0$ and
\begin{equation}\label{eq1}
\ddt\left(\intd  \frac {|u|^2}{2} dx + \intd n\phi dx \right) +
\intd |\na u|^2 dx  = \intd n \Del \phi dx + \intd \chi(c) n \nabla
c \na \phi dx,
\end{equation}
\begin{equation}\label{eq1-2}
\ddt \intd n\ln n dx + \intd \frac{|\na n|^2}{n} dx + \intd \chi'(c)
|\na c|^2 n dx = -\intd \chi(c)\Delta c n dx,
\end{equation}
\begin{equation}\label{eq1-3}
\ddt \intd |\na c|^2 dx + \intd |\Del c|^2 dx = \intd k(c)\Delta c n
dx + \sum_{i,j=1}^{3}\intd c\pa_i\pa_j c \pa_i u_j dx.
\end{equation}
Multiplying $\mu$ to the last equation \eqref{eq1-3} and adding it
to the second equation \eqref{eq1-2}, we have
\[
\ddt\left(\intd n \ln n  dx +  \mu |\na c|^2 dx \right) + \intd |\na
\sqrt n|^2 dx + \mu\intd |\Del c|^2 dx + \int \chi'(c)|\na c|^2 n dx
\]
\begin{equation}\label{eq2}
\le  -\intd \underbrace{(\chi(c)- \mu k(c))}_{=0}\Delta c n dx + \mu
\|c_0\|_{L^{\infty}}\|\na u\|_{L^2}\|\Del c\|_{L^2}\le
\frac{C_1}{2}\|\na u\|_{L^2}^2 + \frac{\mu}{4} \|\Del c\|_{L^2}^2
\end{equation}
for some $C_1$, which can be taken bigger than $1$, i.e. $C_1> 1$.
Also it holds that
\begin{equation} \label{xn}
\ddt\intd \wx n dx = \intd nu \na \wx dx + \intd n \Del \wx dx +
\intd \chi(c) n \na c \na \wx dx.
\end{equation}
Since the term  $\intd nu \na \wx dx $ is bounded as follows:
\[
\|n\|_{L^{\frac 65}}\|u\|_{L^6} \le C\|n\|_{L^1}^{\frac 34} \|\na
\sqrt n\|_{L^2}^{\frac 12}\|\na u\|_{L^2} \le \frac 12  \|\na \sqrt n
\|^2_{L^2} + \frac 12\| \na u\|^2_{L^2} + C(\|n_0\|_{L^1}),
\]
we can have
\begin{align}\label{eq3}
\ddt \intd \wx n dx \le \frac 12 \|\na \sqrt n \|_{L^2}^2 + \frac 12
\|\na u\|^2_{L^2} + \intd \chi(c) n \na c \na \wx dx +C.
\end{align}
We estimate the term $\intd \chi(c) n \na c \na \wx dx$ similarly as
above.
\[
\intd  \chi(c) n \na c \na \wx dx  \le  C\| \sqrt n\|_{L^3}^2\|\na
c\|_{L^3} \le C\|\sqrt n\|_{L^2}\|\na \sqrt n\|_{L^2}\|\na
c\|_{L^2}^{\frac 12}\|\Del c\|_{L^2}^{\frac 12}
\]
\begin{equation}\label{eq33}
\le C\|\na c\|_{L^2}\|\Del c\|_{L^2} + \frac 14 \|\na \sqrt
n\|_{L^2}^2\le C\|\na c\|^2 _{L^2}+ \frac 14 \|\Del c\|^2_{L^2} +
 \frac 14 \|\na \sqrt n\|_{L^2}^2.
\end{equation}
Multiplying $C_1$ to \eqref{eq1} and adding it together with
\eqref{eq2} and \eqref{eq3}, we have
\[
\ddt\left( \intd C_1(\frac{|u|^2}{2} + n\phi ) + n\ln n + \frac{|\na
c|^2}{2} + \wx n dx \right)
\]
\begin{equation}\label{add}
+ \frac{C_1-1}{2} \| \na u\|_{L^2}^2 + \frac 14 \|\na \sqrt n
\|_{L^2}^2 + \frac 14\|\Del c\|_{L^2}^2\le C (\| \na c\|_{L^2}^2 +
\|u\|^2_{L^2} ) + C.
\end{equation}
Then, by Gronwall's inequality, we have
\[
\intd (\frac{|u|^2}{2} + n\phi + n\ln n + \frac{ |\na c|^2}{2} + \wx n) dx
\]
\begin{equation}\label{ineq}
+ \int_0^T  \| \na u\|_{L^2}^2 +  \|\na \sqrt n \|_{L^2}^2 + \|\Del
c\|_{L^2}^2 dt  \le C,
\end{equation}
where $C(T, \|\chi(c)\|_{L^{\infty}},  \|n_0\|_{L^1}, \| \wx n_0\|_{L^1},\|\Del
\phi\|_{L^{\infty}}, \|\na \phi\|_{L^{\infty}})$. By same reasoning
for treating $ n(\ln n)_-$ term in \eqref{negative}, it follows that
\begin{equation}\label{ineq1}
\intd (|u|^2 + n\phi + n\abs{\ln n} + |\na c|^2 + \wx n) dx+
\int_0^T  \| \na u\|_{L^2}^2 +  \|\na \sqrt n \|_{L^2}^2 + \|\Del
c\|_{L^2}^2 dt  \le C.
\end{equation}
Streamline of constructing global weak solutions, as in usual steps
for the Navier-Stokes equations, is the following:\\
\textit{
$\cdot$ regularizing the system for which we prove the existence of smooth solutions\\
$\cdot$ finding uniform estimates for the solutions of the regularized system\\
$\cdot$ passing to the limit on the regularized parameters.\\}
\begin{subsection}{Regularization}
In this subsection, we intend to construct approximate solutions of
the system.
\indent
For the incompressible Navier-Stokes equations defined on a general
bounded domain, the global weak solutions are constructed by using
the spectral projections $(P_{k})_{k\in \mathbb{Z}}$, associated to
the inhomogeneous Stokes operator (\cite[Chapter 2]{CDGG}). A number of useful properties of
the family $(P_k)_{k\in \mathbb{Z}}$ are listed as follows:
 For any $ u\in \mathcal{H}(\Omega),$
\begin{align}
& P_k P_{k'} u = P_{min(k,k')}u ,\qquad
\lim_{k\to\infty} \|P_k u - u\|_{\mathcal{H}(\Omega)} =0, \label{ext}\\ %\label{app}
& \| \na P_k u\|_{L^2(\Omega)} \le \sqrt k\|u\|_{L^2(\Omega)},
\qquad \| \Del P_k u\|_{L^2(\Omega)} \le
k\|u\|_{L^2(\Omega)}\label{smoothing},\\
& \| (1-P_k)u\|_{L^2} \le \frac{1}{\sqrt k}\|u\|_{\mathcal
V_{\sigma}}. \label{difference}
\end{align}
In particular, \eqref{smoothing} implies $P_k u \in
L^{\infty}(\Omega)$ for $u\in L^2(\Omega)$ in three dimensions.
%{\tt Jihoon: $\Omega=\bbr^3$ in our case?}
\begin{defn}
The bilinear map Q is defined by
\begin{align*}
&Q: \calV \times \calV \to \calV',\\
& (u,v) \mapsto -{\rm{div }}\,(u\otimes v).
\end{align*}
%In what follows, for convenience, we denote $P_k Q(u,u)$ by
%$F_k(u)$.
\end{defn}
 From now on  we denote by $\mathcal{H}_k(\bbr^3)$  the space $P_k \calH$.
We regularize   \eqref{KSNS} by a frequency cut-off operator $P_k$ and a mollifier $\sigma^{\ep}$:
 \begin{equation}\label{regular} \left\{
\begin{array}{ll}
\partial_t {n^{k,\ep}}(t) = - {u^{k,\ep}} \cdot \nabla  {n^{k,\ep}} + \Delta {n^{k,\ep}} -
\nabla\cdot ({n^{k,\ep}}[(\chi ({c^{k,\ep}})  \nabla {c^{k,\ep}})\ast \sigma^{\ep}]),\\
\vspace{-3mm}\\
\partial_t {c^{k,\ep}}(t)= - {u^{k,\ep}} \cdot \nabla {c^{k,\ep}}+\Delta {c^{k,\ep}}
-k({c^{k,\ep}}) ({n^{k,\ep}}\ast \sigma^{\ep}),\\
\vspace{-3mm}\\
\partial_t {u^{k,\ep}}(t)= -P_kQ({u^{k,\ep}},\, {u^{k,\ep}})+P_k\Delta {u^{k,\ep}}
-P_k({n^{k,\ep}} \nabla \phi),
\end{array}
\right.
\end{equation}
with initial data
\[
(n_0^{k,\ep}, c_0^{k,\ep}, u_0^{k, \ep})=(n_0\ast\sigma^{\ep},
c_0\ast\sigma^{\ep}, P_k u_0\ast \sigma^{\ep}),
\]
where $n_0, c_0, u_0$ is the initial data of \eqref{KSNS} satisfying
the condition \eqref{weakdata} in Theorem \ref{Theorem5}. The mollifier is
defined as usual such that
$\sigma^{\ep}(x)=\ep^{-3}\sigma(\ep^{-1}x)$ for $\sigma \in
C_0^{\infty}(\bbr^3)$. Apart from the frequency cut-off the
regularization is same one for a chemotaxis-fluid model studied in
\cite{Liu-Lorz}.
%Mollifying a vector fields is understood coordinatewise.
Repeating similar arguments in Theorem \ref{Theorem2}, we obtain the
local solution of \eqref{KSNS} in the class
\begin{align}\label{spaces}\begin{aligned}
&{n^{k,\ep}}\in L^{\infty}(0,T; H^{m-1}(\bbr^3))\cap L^2(0,T;H^m(\bbr^3))\\
&{c^{k,\ep}}\in L^{\infty}(0,T; H^{m-1}(\bbr^3))\cap L^2(0,T;H^m(\bbr^3))\\
&{u^{k,\ep}}\in L^{\infty}(0,T; H^{m-1}(\bbr^3)\cap \mathcal{H}_k(\bbr^3))
\cap L^2(0,T;H^m(\bbr^3))
\end{aligned}
\end{align}
for some time $T$ and for all $m>3$. It turns out that due to the
regularization of nonlinear terms and  smoothing properties of $P_k$
(see \eqref{smoothing}), the local solution of \eqref{KSNS} can be
extended up to infinite time.
\begin{proposition}\label{prop1}
The regularized system \eqref{regular} has the unique global
solution $(n^{k,\ep}, c^{k,\ep}, u^{k,\ep})$ in a class
\eqref{spaces} for any time $T<\infty$.
% Moreover $T$ is arbitrary. Thus the solution is global.
\end{proposition}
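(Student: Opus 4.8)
The plan is to combine the local well-posedness of \eqref{regular}, obtained as in Theorem \ref{Theorem2}, with the a priori energy estimate \eqref{ineq1}, and to exploit that the two regularizations $P_k$ and $\sigma^{\ep}$ turn the higher-order Sobolev estimates into \emph{linearly implicit} Gronwall inequalities, which rules out finite-time blow-up. First I would record that, repeating the iteration and energy arguments of the proof of Theorem \ref{Theorem2} with the nonlinearities replaced by their regularized counterparts in \eqref{regular}, the system has, for each fixed $k$ and $\ep$, a unique local solution in the class \eqref{spaces} on a maximal interval $[0,T_{k,\ep})$, together with the standard continuation criterion: if $T_{k,\ep}<\infty$, then $\limsup_{t\nearrow T_{k,\ep}}\big(\norm{n^{k,\ep}(t)}_{H^{m-1}}+\norm{c^{k,\ep}(t)}_{H^{m-1}}+\norm{u^{k,\ep}(t)}_{H^{m-1}}\big)=\infty$. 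It therefore suffices to bound these $H^{m-1}$ norms on an arbitrary finite interval $[0,T]$.

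Next I would verify that the energy inequality \eqref{ineq1} remains valid for \eqref{regular}. By the maximum principle $n^{k,\ep}\ge 0$ (the first equation of \eqref{regular} is linear parabolic in $n^{k,\ep}$), hence the source $-k(c^{k,\ep})(n^{k,\ep}\ast\sigma^{\ep})$ of the second equation is $\le 0$ by \textbf{(B)}, so again by the maximum principle $c^{k,\ep}\ge 0$ and $\norm{c^{k,\ep}(t)}_{L^p}\le\norm{c_0}_{L^p}$. Reproducing the computations \eqref{eq1}--\eqref{eq33}, the only new feature is that the right-hand sides of the analogues of \eqref{eq1-2} and \eqref{eq1-3} read $-\intd[(\chi(c)\Del c)\ast\sigma^{\ep}]\,n\,dx$ and $\mu\intd k(c)(n\ast\sigma^{\ep})\,\Del c\,dx$; taking the mollifier $\sigma$ even, so that convolution is self-adjoint on $L^2$, and using \textbf{(AA)} in the form $\chi=\mu k$, these two terms cancel exactly, just as $\chi-\mu k=0$ produced the cancellation in \eqref{eq2}. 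The same Gronwall argument then gives \eqref{ineq1} with a constant finite on $[0,T]$; in particular $u^{k,\ep}\in L^\infty(0,T;L^2)$, $\na u^{k,\ep}\in L^2(0,T;L^2)$, $c^{k,\ep}\in L^\infty(0,T;H^1\cap L^\infty)$, $\Del c^{k,\ep}\in L^2(0,T;L^2)$, $\na\sqrt{n^{k,\ep}}\in L^2(0,T;L^2)$, and $\norm{n^{k,\ep}(t)}_{L^1}=\norm{n_0}_{L^1}$.

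Now the higher-order bounds. For the velocity, $u^{k,\ep}=P_k u^{k,\ep}$, so \eqref{smoothing} gives $\norm{u^{k,\ep}(t)}_{H^s}\le k^{s/2}\norm{u^{k,\ep}(t)}_{L^2}$; hence $u^{k,\ep}$ is bounded in $L^\infty(0,T;H^s)\cap L^2(0,T;H^s)$ for every $s$ (with constants depending on $k$ and $T$), and in particular $\na u^{k,\ep}\in L^\infty_{t,x}$. For $n^{k,\ep}$, the crucial point is that the mollifier renders the chemotactic drift arbitrarily smooth at the price of $\ep$-dependent constants and \emph{without} differentiating $\chi$, since $\partial^{\alpha}\big((\chi(c^{k,\ep})\na c^{k,\ep})\ast\sigma^{\ep}\big)=(\chi(c^{k,\ep})\na c^{k,\ep})\ast\partial^{\alpha}\sigma^{\ep}$; by Young's inequality and $\norm{\chi(c^{k,\ep})}_{L^\infty}<\infty$ this yields $\norm{(\chi(c^{k,\ep})\na c^{k,\ep})\ast\sigma^{\ep}}_{H^s}\le C(\ep,s)\norm{\na c^{k,\ep}}_{L^2}$, which is bounded on $[0,T]$ by the previous step, as is $\na\cdot\big((\chi(c^{k,\ep})\na c^{k,\ep})\ast\sigma^{\ep}\big)$. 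Thus the first equation of \eqref{regular} is a linear parabolic equation for $n^{k,\ep}$ whose drift $u^{k,\ep}+(\chi(c^{k,\ep})\na c^{k,\ep})\ast\sigma^{\ep}$ and zeroth-order coefficient are bounded in $L^\infty(0,T;H^s)$ for every $s$; iterating the $H^s$-energy estimate — each step linearly implicit in the top-order norm, the remaining factors being absorbed into the dissipation or controlled by the already-bounded coefficients — and using $n_0^{k,\ep}=n_0\ast\sigma^{\ep}\in\bigcap_s H^s$, one gets $n^{k,\ep}\in L^\infty(0,T;H^s)\cap L^2(0,T;H^{s+1})$ for all $s$. Finally, with $n^{k,\ep}$ bounded in every $H^s$, $\norm{n^{k,\ep}\ast\sigma^{\ep}}_{L^\infty}\le C(\ep)\norm{n_0}_{L^1}$, and $k(0)=0$, the product and composition (Moser) estimates give $\norm{k(c^{k,\ep})(n^{k,\ep}\ast\sigma^{\ep})}_{H^s}\le C\big(1+\norm{c^{k,\ep}}_{H^s}\big)$ with $C$ depending on $\ep$, $s$, $\norm{c_0}_{L^\infty}$ and the already-bounded norms of $n^{k,\ep}$; since the transport term in the second equation of \eqref{regular} contributes only a commutator with the smooth field $u^{k,\ep}$, the $H^s$-energy estimate for $c^{k,\ep}$ is again linearly implicit in $\norm{c^{k,\ep}}_{H^s}$, and Gronwall with $c_0^{k,\ep}=c_0\ast\sigma^{\ep}\in\bigcap_s H^s$ yields $c^{k,\ep}\in L^\infty(0,T;H^s)\cap L^2(0,T;H^{s+1})$ for all $s$. (The composition estimate needs $\chi,k$ smooth of the relevant order, which is no restriction since only a fixed finite regularity of the approximate solutions is used in passing to the limit.)

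In summary, the continuation quantity of the first step stays finite on every $[0,T]$, so $T_{k,\ep}=\infty$ and $(n^{k,\ep},c^{k,\ep},u^{k,\ep})$ lies in the class \eqref{spaces} for every finite $T$, while uniqueness on each $[0,T]$ is inherited from the argument in the proof of Theorem \ref{Theorem2}. I expect the main difficulty to be two-fold: first, ensuring in step two that the $L\log L\times H^1\times L^2$ energy identity is not spoiled by the regularization — which is what forces $\sigma$ to be chosen so that convolution is self-adjoint, preserving the $(\chi-\mu k)$-cancellation; and second, checking in the bootstrap that at every order the resulting Gronwall inequality is linearly implicit rather than super-linear, the latter being precisely the phenomenon that confines the unregularized estimates (as in Theorem \ref{Theorem2}) to a short time interval.
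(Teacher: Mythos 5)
Your proposal is correct, but it follows a genuinely different route from the paper. The paper's proof is short: it asserts (without detail) that the blow-up criterion of Theorem \ref{Theorem3} remains valid for the regularized system, and then checks that the two quantities appearing in \eqref{3d-nse-reg} stay finite --- the Serrin condition on $u^{k,\ep}$ follows from $\|\na u^{k,\ep}\|_{L^2}\le C\sqrt{k}\,\|u^{k,\ep}\|_{L^2}$ (so $u^{k,\ep}\in L^\infty_t H^1_x\hookrightarrow L^\infty_t L^6_x$), and $\int_0^T\|\na c^{k,\ep}\|_{L^\infty}^2\,dt<\infty$ follows from an $H^2$ energy estimate for $c^{k,\ep}$ that closes linearly precisely because $\|\na u^{k,\ep}\|_{L^3}\lesssim k^{1/4}\|u^{k,\ep}\|_{L^2}$ and $\|n^{k,\ep}\ast\sigma^{\ep}\|$ is controlled by $\|n_0\|_{L^1}$. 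You instead bypass Theorem \ref{Theorem3} entirely: you use only the elementary continuation criterion attached to the local existence theory and bootstrap the full scale of Sobolev norms, exploiting that $u^{k,\ep}=P_k u^{k,\ep}$ controls every $H^s$ norm of $u$ by its $L^2$ norm and that the mollified drift $(\chi(c^{k,\ep})\na c^{k,\ep})\ast\sigma^{\ep}$ is smooth with $\ep$-dependent constants, so each $H^s$ estimate is linearly implicit and Gronwall applies globally. Your route costs more bookkeeping at high order but is more self-contained, since it does not require re-verifying the refined blow-up criterion for the regularized system (a step the paper explicitly omits as ``tedious repetition''); your observation that the mollifier must be taken even so that convolution is self-adjoint and the $(\chi-\mu k)$-cancellation of \eqref{eq2} survives regularization is a point the paper glosses over but which is genuinely needed in both arguments.
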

Before presenting the proof we observe that the  approximating
solution $({n^{k,\ep}},{c^{k,\ep}},{u^{k,\ep}})$ of  \eqref{regular}
satisfies an energy inequality.
\begin{proposition}\label{prop2}
The  solution $({n^{k,\ep}},{c^{k,\ep}},{u^{k,\ep}})$ of
\eqref{regular} satisfies the following inequality.
\begin{align}\label{appineq}
\begin{aligned}
&\intd (\frac{|{u^{k,\ep}}|^2}{2} + {n^{k,\ep}}\phi ) +
{n^{k,\ep}}|\ln {n^{k,\ep}}| + \frac{|\na {c^{k,\ep}}|^2}{2} +
\wx {n^{k,\ep}} dx   \\
& \qquad + \int_0^T   \| \na {u^{k,\ep}}\|_{L^2}^2 +  \|\na
\sqrt {n^{k,\ep}} \|_{L^2}^2 + \|\Del {c^{k,\ep}}\|_{L^2}^2 dt  \le
C,
\end{aligned}
\end{align}
where $C=C\bke{T, \|\chi(c)\|_{L^{\infty}}, \|\langle x \rangle
n_0\|_{L^1},  \| \na c_0\|_{L^2},
 \|{n_0}|\ln {n_0}|\|_{L^1},  \|\Del \phi\|_{L^{\infty}}, \|\na \phi\|_{L^{\infty}}, \|\phi\|_{L^{\infty}}}$.
\end{proposition}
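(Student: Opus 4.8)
The plan is to reproduce, step for step, the formal computation that led from \eqref{eq1} to \eqref{ineq1}, now carried out rigorously for the genuine smooth, global-in-time solution $(n^{k,\ep},c^{k,\ep},u^{k,\ep})$ of \eqref{regular} furnished by Proposition \ref{prop1}, and to check that the two regularizing devices---the frequency cut-off $P_k$ in the velocity equation and the mollification by $\sigma^{\ep}$ in the taxis and consumption terms---leave every estimate intact with constants independent of $k$ and $\ep$. As usual I take $\sigma$ to be even, so that $\intd (f\ast\sigma^{\ep})g\,dx=\intd f\,(g\ast\sigma^{\ep})\,dx$. As a preliminary, the parabolic maximum principle applied to the linear transport--diffusion equations satisfied by $n^{k,\ep}$ and by $c^{k,\ep}$ (after freezing the other unknown, and using $k(0)=0$, \textbf{(B)}) gives $n^{k,\ep},c^{k,\ep}\ge 0$, $\norm{n^{k,\ep}(t)}_{L^1}=\norm{n_0\ast\sigma^{\ep}}_{L^1}=\norm{n_0}_{L^1}$, and $\norm{c^{k,\ep}(t)}_{L^p}\le\norm{c_0\ast\sigma^{\ep}}_{L^p}\le\norm{c_0}_{L^p}$ for $1\le p\le\infty$; in particular $\norm{\chi(c^{k,\ep})}_{L^{\infty}}\le \sup_{[0,\norm{c_0}_{L^{\infty}}]}\chi$ uniformly in $k,\ep$.

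First I would handle the velocity identity. Testing the third equation of \eqref{regular} against $u^{k,\ep}$ and using that $P_k$ is an $L^2$-orthogonal projection with $P_k u^{k,\ep}=u^{k,\ep}$, every cut-off is transferred off $u^{k,\ep}$: $\langle P_k Q(u^{k,\ep},u^{k,\ep}),u^{k,\ep}\rangle=\langle Q(u^{k,\ep},u^{k,\ep}),u^{k,\ep}\rangle=0$ by the antisymmetry of the trilinear term together with $\na\cdot u^{k,\ep}=0$, $\langle P_k\Del u^{k,\ep},u^{k,\ep}\rangle=-\norm{\na u^{k,\ep}}_{L^2}^2$, and $\langle P_k(n^{k,\ep}\na\phi),u^{k,\ep}\rangle=\intd n^{k,\ep}\na\phi\cdot u^{k,\ep}\,dx$. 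Hence $\tfrac12\ddt\norm{u^{k,\ep}}_{L^2}^2+\norm{\na u^{k,\ep}}_{L^2}^2=-\intd n^{k,\ep}\na\phi\cdot u^{k,\ep}\,dx$, which is exactly the identity used in \eqref{eq1}; adding it to the $n^{k,\ep}\phi$ balance $\ddt\intd n^{k,\ep}\phi\,dx=\intd n^{k,\ep}u^{k,\ep}\cdot\na\phi\,dx+\intd n^{k,\ep}\Del\phi\,dx+\intd n^{k,\ep}[(\chi(c^{k,\ep})\na c^{k,\ep})\ast\sigma^{\ep}]\cdot\na\phi\,dx$ cancels the $\intd n^{k,\ep}u^{k,\ep}\cdot\na\phi\,dx$ term, and the two remaining bulk terms are bounded by $\norm{\Del\phi}_{L^{\infty}}\norm{n_0}_{L^1}$ and, using $\norm{(\cdot)\ast\sigma^{\ep}}_{L^3}\le\norm{\cdot}_{L^3}$ and Gagliardo--Nirenberg, as in \eqref{eq33}.

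The core of the argument is the entropy cancellation. Testing the first equation of \eqref{regular} against $\ln n^{k,\ep}$, the transport term vanishes, the diffusion term gives the dissipation $-\intd \frac{|\na n^{k,\ep}|^2}{n^{k,\ep}}\,dx=-4\norm{\na\sqrt{n^{k,\ep}}}_{L^2}^2$, and integrating the taxis term by parts twice and shifting the (even) mollifier produces $-\intd (n^{k,\ep}\ast\sigma^{\ep})\bke{\chi'(c^{k,\ep})|\na c^{k,\ep}|^2+\chi(c^{k,\ep})\Del c^{k,\ep}}dx$; since $n^{k,\ep}\ast\sigma^{\ep}\ge 0$ and $\chi'\ge 0$ by \textbf{(B)}, the term $\intd\chi'(c^{k,\ep})|\na c^{k,\ep}|^2(n^{k,\ep}\ast\sigma^{\ep})\,dx$ has the right sign and is dropped, leaving the analogue of \eqref{eq1-2} with right side $-\intd\chi(c^{k,\ep})\Del c^{k,\ep}(n^{k,\ep}\ast\sigma^{\ep})\,dx$. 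Testing the second equation against $-\Del c^{k,\ep}$ gives the analogue of \eqref{eq1-3} with right side $\intd k(c^{k,\ep})\Del c^{k,\ep}(n^{k,\ep}\ast\sigma^{\ep})\,dx+\sum_{i,j}\intd c^{k,\ep}\pa_i\pa_j c^{k,\ep}\pa_i u^{k,\ep}_j\,dx$, the last sum being bounded by $C\norm{c_0}_{L^{\infty}}\norm{\na u^{k,\ep}}_{L^2}\norm{\Del c^{k,\ep}}_{L^2}$ as in \eqref{eq1-3}--\eqref{eq2}. Multiplying the $c^{k,\ep}$-identity by $\mu$ and adding it to the $n^{k,\ep}$-identity, the two chemotaxis/consumption contributions merge into $-\intd\bke{\chi(c^{k,\ep})-\mu k(c^{k,\ep})}\Del c^{k,\ep}(n^{k,\ep}\ast\sigma^{\ep})\,dx$, which is identically zero by \textbf{(AA)}. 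This is the single place where the specific placement of $\sigma^{\ep}$ in \eqref{regular} (and the evenness of $\sigma$) is used, and I expect it to be the only genuinely delicate point of the proof.

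The remainder is bookkeeping. The $\wx n^{k,\ep}$ balance is closed exactly as in \eqref{eq3}, \eqref{eq33}: after integration by parts the only nonlinear terms are $\intd n^{k,\ep}u^{k,\ep}\cdot\na\wx\,dx$ and $\intd n^{k,\ep}[(\chi(c^{k,\ep})\na c^{k,\ep})\ast\sigma^{\ep}]\cdot\na\wx\,dx$, and, using $|\na\wx|+|\Del\wx|\le C$, $\norm{(\cdot)\ast\sigma^{\ep}}_{L^p}\le\norm{\cdot}_{L^p}$, $\norm{n^{k,\ep}(t)}_{L^1}\le\norm{n_0}_{L^1}$ and the three-dimensional Gagliardo--Nirenberg inequalities, these are bounded by $\delta\norm{\na\sqrt{n^{k,\ep}}}_{L^2}^2+C_\delta\bke{\norm{\na c^{k,\ep}}_{L^2}^2+\norm{u^{k,\ep}}_{L^2}^2}+\tfrac14\norm{\Del c^{k,\ep}}_{L^2}^2+C$ just as before. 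Taking $\delta$ small, multiplying the $|u^{k,\ep}|^2$-plus-$n^{k,\ep}\phi$ identity by a fixed $C_1>1$, adding the entropy identity and the $\wx n^{k,\ep}$ balance, and absorbing the $\delta$- and $\tfrac14$-terms on the left, one reaches the analogue of \eqref{add}; Gronwall then yields \eqref{appineq} with $\intd n^{k,\ep}\ln n^{k,\ep}$ in place of $\intd n^{k,\ep}|\ln n^{k,\ep}|$, and adding $2\intd n^{k,\ep}(\ln n^{k,\ep})_-\,dx$---bounded by $C+C\intd\wx n^{k,\ep}\,dx$ with an absolute $C$, exactly as in \eqref{negative}---upgrades it to the stated form. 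Uniformity of the constants in $(k,\ep)$ follows because $P_k$ and convolution with $\sigma^{\ep}$ preserve nonnegativity and total mass, are $L^p$-contractions, and satisfy $\norm{\wx(n_0\ast\sigma^{\ep})}_{L^1}\le C\norm{\wx n_0}_{L^1}$, $\norm{(n_0\ast\sigma^{\ep})|\ln(n_0\ast\sigma^{\ep})|}_{L^1}\le C\bke{1+\norm{(1+|\ln n_0|)n_0}_{L^1}}$ and $\norm{\na(c_0\ast\sigma^{\ep})}_{L^2}\le\norm{\na c_0}_{L^2}$ for $0<\ep\le1$, all consequences of \eqref{weakdata} via standard mollifier and entropy estimates. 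Finally, to justify that $\ln n^{k,\ep}$ is an admissible test function one notes that $n^{k,\ep}$ is smooth and, by the strong maximum principle, strictly positive for $t>0$, while the decay at spatial infinity making all integrals converge is precisely what the $\wx n^{k,\ep}$ and $n^{k,\ep}(\ln n^{k,\ep})_-$ bounds supply; alternatively one tests with $\ln(n^{k,\ep}+\delta)$ and lets $\delta\downarrow0$ by monotone convergence.
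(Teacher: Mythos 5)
Your proof is correct and follows essentially the same route as the paper: the paper's own proof simply asserts that the cancellation of \eqref{eq2} survives the regularization and then bounds the mollified initial data (Jensen's inequality for the entropy, the argument of \eqref{negative} for its negative part), whereas you carry out the same computations in detail, correctly identifying that the evenness of $\sigma$ is what lets the mollifier migrate from the taxis term onto $n^{k,\ep}$ so that \textbf{(AA)} still annihilates the $\Delta c^{k,\ep}$ contributions. There is no gap.
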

\begin{proof}
We note that  the same cancellation  as in \eqref{eq2} holds for the
regularized system \eqref{regular}, hence $({n^{k,\ep}},
{c^{k,\ep}}, {u^{k,\ep}})$ satisfying \eqref{spaces} satisfy the
energy inequalities \eqref{eq1}-\eqref{xn}.  Moreover  the following
moment bound holds by similar estimates as \eqref{eq3},
\eqref{eq33},
\begin{align*}
\ddt \intd \wx {n^{k,\ep}} dx &= \intd {n^{k,\ep}} {u^{k,\ep}}\na
\wx dx + \intd {n^{k,\ep}} \Del \wx dx + \intd
{n^{k,\ep}}[(\chi({c^{k,\ep}})\na {c^{k,\ep}})\ast \sigma^{\ep} ]\na
\wx dx \\& \le  C\|u^{k,\ep}\|_{L^2}\| \na u^{k,\ep}\|_{L^2}+ C
\|\na c^{k,\ep}  \|_{L^2}\|\Del c^{k,\ep} \|_{L^2} + \frac 12 \| \na
\sqrt{n^{k,\ep}}\|_{L^2}^2 + \| n_0\|_{L^1}.
\end{align*}
Then we  have  \eqref{appineq} with T depending on $\|\na
{c_0^{k,\ep}}\|_{L^2} $, $\| \wx {n_0^{k,\ep}} \|_{L^1}$,
$\|{n_0^{k,\ep}}|\ln {n_0^{k,\ep}}|\|_{L^1}$. It is immediate to
have
\[
\|\na {c_0^{k,\ep}}\|_{L^2} + \| \wx {n_0^{k,\ep}} \|_{L^1} \le
\|\na {c_0}\|_{L^2}+ \| \wx {n_0} \|_{L^1}.
\]
Note that $x\ln x$ is convex and $d\mu = \sigma^{\ep}(y) dy$
provide a probability measure. Then by Jensen's inequality, we have
\[
{n_0^{k,\ep}}(\ln {n_0^{k,\ep}})_+ \le (n_0(\ln n_0)_+)\ast
\sigma^{\ep}.
\]
Integrating the above in $x$ and observing that
$\lim_{\ep\rightarrow 0} \norm{(n_0|\ln n_0|)\ast
\sigma^{\ep}}_{L^1}=\norm{n_0|\ln n_0|}_{L^1}$, we have
\begin{equation}\label{jensen}
\|{n_0^{k,\ep}}(\ln {n_0^{k,\ep}})_+ \|_{L^1} \le \|{n_0}|\ln
{n_0}|\|_{L^1}.
\end{equation}
For the $\|{n_0^{k,\ep}}(\ln {n_0^{k,\ep}})_- \|_{L^1}$, proceeding
similarly as \eqref{negative}, we have
\[
\|{n_0^{k,\ep}}(\ln {n_0^{k,\ep}})_- \|_{L^1} \le C+ \int_{\R^3}
n^{k, \ep} \wx dx \le C\bke{1+ \int_{\R^3} n\wx dx},
\]
from which we deduce the proposition.
% \{ Note that $x\ln|x|$ is convex and $d\mu = \sigma^{\ep}(y) dy$
% provide a probability measure. Then by Jensen's inequality, we have
% \[
% {n_0^{k,\ep}}|\ln {n_0^{k,\ep}}| \le (n_0|\ln n_0|)\ast
% \sigma^{\ep}.
% \]
% Integrating the above in $x$ and observing that
% $\lim_{\ep\rightarrow 0} \norm{(n_0|\ln n_0|)\ast
% \sigma^{\ep}}_{L^1}=\norm{n_0|\ln n_0|}_{L^1}$, we have
% \begin{equation}\label{jensen}
% \|{n_0^{k,\ep}}(\ln {n_0^{k,\ep}})_+ \|_{L^1} \le \|{n_0}|\ln
% {n_0}|\|_{L^1},
% \end{equation}
% which concludes the proposition.\}
\end{proof}
Now we give the proof of Proposition \ref{prop1}.\\
{\bf{Proof of Proposition \ref{prop1}}}\quad We first observe that
the regularity criterion in Theorem \ref{Theorem3} hold true for the
system \eqref{regular}. Since its verification is tedious repetition
of that of Theorem \ref{Theorem3}, we omit its details. If we
consider the second equation of \eqref{regular}, then we have the
following energy estimates.
\[
\frac12 \frac{d}{dt} \| c^{k, \ep} \|_{H^2}^2 + \|\na
c^{k,\ep}\|_{H^2}^2 \leq C\| \na u^{k, \ep}\|_{L^3}^2 \|c^{k,
\ep}\|_{L^6}^2 + C\|\na c^{k, \ep}\|_{L^6}^2 \|n^{k,
\ep}\|_{L^1}^2+\frac12 \|\na c^{k, \ep}\|_{H^2}^2
\]
\[
\leq C k^{\frac12} \|u^{k, \ep}\|_{L^2}^2\|c^{k, \ep}\|_{H^2}^2+C
\|n^{k, \ep}_0\|_{L^1}^2\| c^{k, \ep}\|_{H^2}^2+\frac12 \|\na c^{k,
\ep}\|_{H^2}^2.
\]
By using Gronwall's inequality, we have $\norm{\na c^{k,
\ep}}_{L^{\infty}_{x}L^{2}_t}< \infty$. Since $\|u^{k,
\ep}(t)\|_{L^2}$ is bounded and  $\|\na u^{k, \ep} \|_{L^2} \le C
\sqrt k \|u^{k, \ep}\|_{L^2}$, we can also demonstrate that the
Serrin condition in Theorem \ref{Theorem3} is satisfied for $u^{k,
\ep}$. This completes the proof. \qed
\end{subsection}

\begin{subsection}{Global weak solutions}
In this subsection, we give the proof of Theorem \ref{Theorem5}.
%\\{\tt \{$L^1$ convergence of $n_0^{l, \ep}$  and  $L^1$ uniform boundedness of
%$\wx n_0^{l,\ep} ,n_0^{l,\ep} |\ln n_0^{l,\ep}| $ are enough.\}}
\begin{pfthm4}
We consider an approximating sequence $(n_0^{l,\ep}, c_0^{l,\ep},
u_0^{l, \ep})$ to $(n_0, c_0, u_0)$. Note that
\[
\int_{\R^3} |n_0^{l,\ep}-n_0| +|\na c_0^{l,\ep} -\na c_0| dx + \int_{\R^3} |u_0^{l,\ep} - u|^2 dx \rightarrow 0.
,\] and
\[
\int_{\R^3}\wx n_0^{l,\ep}  dx +
\int_{\R^3} n_0^{l,\ep} |\ln n_0^{l,\ep}| \le C \int _{\R^3} \wx n_0 dx +
\int _{\R^3}  n_0|\ln n_0| dx  +C.
\]
We denote by $({n^{l,\ep}}, {c^{l,\ep}}, {u^{l,\ep}})$ the approximating solution constructed
in the previous section for the system \eqref{regular} with
initial data $({n^{l}}(0,\cdot), {c^{l}}(0, \cdot)) = (n_0^l(0,\cdot), c_0^l(0, \cdot))$ and
${u^{l}}(0,\cdot) = P_l u_0(\cdot)$.
%By the Maximum principle and the energy inequality \eqref{appineq}
Several uniform
estimates hold for the approximating solutions:
\begin{align}
\|{c^{l,\ep}}\|_\pq{\ify}{p} \le C  \mbox{ for } 1\le p\le \infty, \label{csup}\\
\|{c^{l,\ep}}\|_{L^{\ify}(0,T; H^1(\bbr^3))} + \| \Del {c^{l,\ep}} \|_\td   \le C, \label{cb}\\
\|\sqrt {n^{l,\ep}} \|_\pq{\ify}{2} + \| \na \sqrt {n^{l,\ep}} \|_\td \le C, \label{nb} \\
\| {u^{l,\ep}}\|_\pq{\ify}{2} + \| \na {u^{l,\ep}} \|_\td \le C \label{ub}.
\end{align}
Then there exists subsequences $n^{l,\ep}, c^{l,\ep}, u^{l,\ep}$ and
some functions $ n,c,u $ such that
\[
\sqrt{n^{l,\ep}} \rightharpoonup \sqrt{n} \quad
L^{\infty}(0,T;L^2(\bbr^3))-{\rm{weak}}^*,
\]
\[
c^{l,\ep} \rightharpoonup  c  \quad  L^{\infty}(0,T;L^p(\bbr^3))
\cap L^{\infty}(0,T; H^1(\bbr^3))-{\rm{weak}}^*,
\]
\[
u^{l,\ep} \rightharpoonup u \quad
L^{\infty}(0,T;L^2(\bbr^3))-{\rm{weak}}^* \cap
L^2(0,T;\mathcal{V}_{\sigma}(\bbr^3))-{\rm{weak}}
\]
for $1\le p\le \infty$. Let us show that  $n,c,u$ is  a weak
solution in the sense of Definition \ref{defweak}. By
Gagliardo-Nirenberg inequality and \eqref{nb}, we have
\[
\intd |{n^{l,\ep}}|^p dx \le C\|n_0\|^{\frac{3-p}{2}}_{\lod}\| \na
\sqrt {{n^{l,\ep}}}\|_\ltd^{3(p-1)},
\]
and therefore,
\begin{equation}\label{nbb}
\|{n^{l,\ep}}\|_\pq{q}{p} < C(T),
\quad 1\le q\le \frac{2p}{3(p-1)}
\end{equation}
for $1\le p \le 3$. Some strong convergences are necessary. We note
that \eqref{nbb} implies the source term of the Navier Stokes
equation ${n^{l,\ep}}\na \phi$ is in
 $L^2([0,T]; \calVs'(\bbr^3))$ uniformly  with respect to $l$;
for any $w\in L^2([0,T]; \calVs(\bbr^3))$, it holds that
\begin{align*}
\int_0^T \int_{\bbr^3} P_l({n^{l,\ep}}\na \phi) w dxdt \le \|\na
\phi \|_{L^{\ify}(\bbr^3)}\| {n^{l,\ep}}\|_\pq{2}{\frac
65}\|w\|_\pq{2}{6}.
\end{align*}
It proves that $\pa_t {u^{l,\ep}} $ is uniformly bounded in
$L^2(0,T; \calVs'(\bbr^3))$.
 Note that $u_k$ is uniformly bounded in $L^{\infty}(0,T; \mathcal H(\bbr^3))) \cap
 L^2(0,T; \calVs(\bbr^3))$ due to \eqref{appineq}. Combining these facts and \eqref{smoothing}, \eqref{difference}
we  have  compactness result for $({u^{l,\ep}})$ (see  \cite[Proposition 2.7]{CDGG} for detailed proof):
there exists $u$ in $ L^2(0,T; \mathcal V_{\sigma}(\bbr^3))$ such that up to subsequence
\begin{equation}\label{cpt}
\lim_{l\to\infty,\ep\to 0} \int _0^T \int _K |{u^{l,\ep}}(t,x)- u(t,x)|^2 dx
dt=0,
\end{equation}
for any $T>0$ and compact subset $K$ of $\bbr^3$. In addition, for
$\Psi \in L^2([0, T]; \calV(\bbr^3))$ and $\Phi \in L^2([0,T]\times
\bbr^3)$
\begin{align}\label{lim1} \begin{aligned}
\lim_{l\to\infty,\ep\to 0} \int_0^T \int_{\R^3} \na u^{l,\ep}(t,x) \na
\Psi(t,x) dx dt& =
 \int_0^T \int_{\R^3} \na u(t,x) \na \Psi(t,x) dx dt, \\
 \lim_{l\to\infty,\ep\to 0} \int_0^T \int_{\R^3}  u^{l,\ep}(t,x) \Phi(t,x) dxdt &
 = \int_0^T \int_{\R^3} u(t,x) \Phi(t,x) dxdt.
 \end{aligned}
 \end{align}
 Furthermore, For any $\psi \in C^1(\bbr^+; \calVs(\bbr^3))$
 \begin{equation}\label{lim2}
 \lim_{l\to\infty,\ep\to 0} \sup_{t\in [0,T]}\left|\int_{\R^3} ({u^{l,\ep}}(t,x)-u(t,x)) \psi(t,x) dx\right| =0.
 \end{equation}
Applying a test function $\Psi$ in $C^1([0,T];\calVs(\bbr^3))$, we
obtain
\begin{align}\label{pass}\begin{aligned}
\ddt \langle {u^{l,\ep}}(t),\Psi(t) \rangle &= \langle  \Del {u^{l,\ep}} (t),P_l \Psi(t) \rangle +
\langle  Q({u^{l,\ep}}(t), {u^{l,\ep}}(t)), P_l\Psi(t) \rangle \\
& + \langle  ({n^{l,\ep}}\na \phi), P_l \Psi(t) \rangle +
\langle {u^{l,\ep}}(t), \ddt \Psi(t) \rangle.
\end{aligned}
\end{align}
Following the arguments in \cite{CDGG}, that is,  using
\eqref{cpt}-\eqref{lim2} and the fact
\begin{equation}\label{fact}
\lim_{l\to\infty} \sup_{t\in [0,T]} \| P_l \Psi(t)-
\Psi(t)\|_{\calV(\bbr^3)}=0,
\end{equation}
we can pass to the limit with respect to $l$  so that
\begin{align*}
&\int _{\bbr^3} u\cdot \Psi(T,x) dx + \int _0^T \int_{\bbr^3} (\na u: \na \Psi - u\otimes u : \na \Psi
- u\cdot \pa_t \Psi ) (s,x) dxds \\
&=  \int _{\bbr^3} u_0 (x) \Psi(0,x) dx + \lim_{l\to \infty,\ep\to 0}
\int_0^T \langle  {n^{l,\ep}}\na \phi , \Psi \rangle dt.
\end{align*}
For the strong convergence of $({n^{l,\ep}})$ we have $
\sqrt{{n^{l,\ep}}} \to \sqrt n $ strongly in $L^2_{loc}(\bbr^3)$ for
a.e. $t\in [0,T]$ by Sobolev embedding. Since $\|\sqrt{{n^{l,\ep}}}
(t)\|_\ltd$ is continuous in time, we redefine $n(t)$ such that  $
\| \sqrt{{n^{l,\ep}}}-  \sqrt n \|_\ltd \to 0 $ for all $t\in
[0,T]$. Then by \eqref{nbb} and Lebesgue Dominated convergence
theorem, it follows that
\begin{equation}\label{nconv}
\| {n^{l,\ep}}-n\|_{L^q(0,T; L^p_{loc}(\bbr^3))} \to 0, \qquad 1\le
q\le \frac{2p}{3(p-1)}
\end{equation}
for $1\le p \le 2$. For the convergence of $({c^{l,\ep}})$ we have
${c^{l,\ep}}(t) \to c(t)$ strongly in $L^2_{loc}(\bbr^3)$ for all
$t\in [0,T]$ and therefore,
\begin{equation}\label{cconv}
\| {c^{l,\ep}}-c\|_{L^p_{loc}((0,T)\times \bbr^3)} \to 0, \qquad
1\le p<\infty
\end{equation}
by the uniform boundedness \eqref{csup}. Moreover we have
\begin{equation}\label{deri}
\| \na {c^{l,\ep}} - \na c\|_{L^2(0,T; L^{p}_{loc}(\bbr^3))} \to 0,
\qquad 1 \le p < 6.
\end{equation}
By \eqref{cb}, $\|\na {c^{l,\ep}}\|_{L^2(0,T; H^1(\bbr^3))}$ is
uniformly bounded. For any $ \na g\in \pq{4}{2}$, we have
\begin{align*}
\int_0^T \intd \pa_t \na {c^{l,\ep}} g dxdt \le \int_0^T \intd
{u^{l,\ep}} \na {c^{l,\ep}} \na g + \Del {c^{l,\ep}} \na g
+ k({c^{l,\ep}})(n^{l,\ep}\ast \sigma_{\ep}) \na g dxdt.
\end{align*}
We estimate
\begin{align*}
\int_0^T \intd {u^{l,\ep}} \na {c^{l,\ep}} \na g dxdt &\le
C \int_0^T \|{u^{l,\ep}}\|_{L^6(\bbr^3)} \|\na {c^{l,\ep}}\|^{\frac 12}_\ltd \|\Delta {c^{l,\ep}}\|^{\frac 12}_\ltd \|\na g\|_\ltd dt \\
& \le C \int_0^T \| \na {u^{l,\ep}}\|_\ltd\|\Delta {c^{l,\ep}}\|^{\frac 12}_\ltd\|\na g\|_\ltd dt\\
&\le C\| \na {u^{l,\ep}}\|_\td\|\Delta {c^{l,\ep}}\|^{\frac 12}_\td\|\na g\|_\pq{4}{2},
\end{align*}
\begin{align*}
\int_0^T \intd k({c^{l,\ep}}){(n^{l,\ep}\ast \sigma_{\ep})} \na g
dxdt\le C\| {n^{l,\ep}}\|_\pq{\frac 43}{2} \|\na g\|_\pq{4}{2}.
\end{align*}
Thus we have $\pa_t {c^{l,\ep}} \in L^{\frac 43}(0,T;
H^{-1}(\bbr^3))$. The strong convergences \eqref{nconv}-\eqref{deri}
are enough to pass to the limit for nonlinear terms in the chemotaxis
part. For instance, testing a $\Psi \in C_c^{\infty}(\bbr^3)$  to
the worst nonlinear term $\na\cdot({n^{l,\ep}} (\chi({c^{l,\ep}})
\na {c^{l,\ep}})\ast \sigma_{\ep})$, we have
\begin{align*}
&\int_0^T \intd   \na \cdot (n^{l,\ep}[(\chi(c^{l,\ep}) \na c^{l,\ep})\ast \sigma_{\ep}]) \Psi
- \na \cdot (n\chi(c)\na c)\Psi dxdt\\
& =  \int_0^T \intd  (n^{l,\ep}-n)[(\chi({c^{l,\ep}})\na c^{l,\ep})\ast \sigma_{\ep}] \na \Psi dxdt\\
&+  \int _0^T\intd n [(\chi({c^{l,\ep}})\na c^{l,\ep})\ast \sigma_{\ep} -\chi(c)\na c] \na\Psi dxdt.
\end{align*}
The second integral is
\begin{align*}
&\int_0^T\intd [(n\na \Psi)\ast \sigma_{\ep} - n\Psi]\chi(c^{l,\ep})\na c^{\l, \ep}
+ n\na\Psi( \chi(c^{l,\ep})\na c^{l, \ep} - \chi(c)\na c) dxdt \\
&= \int_0^T \intd[(n\na \Psi)\ast \sigma_{\ep} - n\Psi]\chi(c^{l,\ep})\na c^{\l, \ep}
+ n\na\Psi ([ (\chi(c^{l,\ep})-\chi(c)]\na c^{l,\ep} + \chi(c)(\na c^{l,\ep} -\na c)) dxdt.
\end{align*}
The integrals  go to zero by the uniform estimates \eqref{csup}-\eqref{ub} and \eqref{nconv}-\eqref{deri} with the Lipschitz
continuous assumption on $\chi(\cdot)$.\\
Lastly, we consider the approximated energy inequality
\eqref{appineq} replacing $n^{\l, \ep} |\ln n^{l,\ep}|$ with $n^{\l,
\ep} \ln n^{l,\ep}$. Taking the limit and uing the convexity of
$x\ln x$ we deduce
\[\intd (\frac{|u|^2}{2} + n\phi + n \ln n  + \frac{|\na c|^2}{2} + \wx n) dx+
\int_0^T  \| \na u\|_{L^2}^2 +  \|\na \sqrt n \|_{L^2}^2 + \|\Del
c\|_{L^2}^2 dt  \le C, \]
with  $C=C(T, \|\chi(c)\|_{L^{\infty}}, \|\langle x \rangle
n_0\|_{L^1},  \| \na c_0\|_{L^2},
 \|{n_0}|\ln {n_0}|\|_{L^1},  \|\Del \phi\|_{L^{\infty}}, \|\na \phi\|_{L^{\infty}},
 \|\phi\|_{L^{\infty}})$.
By the same reasoning for treating $ n(\ln n)_-$ term in
\eqref{negative} we show the weak solutions $(n, c, u)$ satisfy the
energy inequality in Definition \ref{defweak} (c). This completes
the proof of Theorem $4$.
\end{pfthm4}
\end{subsection}

%=================================================
%=================================================
%=================================================
\section*{Acknowledgments}
M. Chae's work was supported by the National Research Foundation of
Korea(NRF No. 2009-0069501). K. Kang's work was partially supported by KRF-2008-331-C00024 and NRF-2009-0088692. J. Lee's work was partially supported by NRF-2009-0072320. We appreciate professor Dmitry Vorotnikov for valuable comments.

\begin{equation*}
\left.
\begin{array}{cc}
{\mbox{Myeongju Chae}}\qquad&\qquad {\mbox{Kyungkeun Kang}}\\
{\mbox{Department of Applied Mathematics }}\qquad&\qquad
 {\mbox{Department of Mathematics}} \\
{\mbox{Hankyong National University
}}\qquad&\qquad{\mbox{Yonsei University}}\\
{\mbox{Ansung, Republic of Korea}}\qquad&\qquad{\mbox{Seoul, Republic of Korea}}\\
{\mbox{mchae@hknu.ac.kr }}\qquad&\qquad {\mbox{kkang@yonsei.ac.kr }}
\end{array}\right.
\end{equation*}
\begin{equation*}
\left.
\begin{array}{c}
{\mbox{Jihoon Lee}}\\
{\mbox{Department of Mathematics }}\\
{\mbox{Sungkyunkwan University}}\\
{\mbox{Suwon, Republic of Korea}}\\
{\mbox{jihoonlee@skku.edu }}
\end{array}\right.
\end{equation*}

\end{document}